\documentclass[10pt,a4paper,reqno]{amsart}

\usepackage[utf8]{inputenc}
\usepackage[english]{babel}
\usepackage{amsmath,amssymb,amsthm,amsfonts}
\usepackage{mathtools}
\usepackage{geometry}
\usepackage{microtype}
\usepackage{listings}
\usepackage{xcolor}
\usepackage{hyperref}
\usepackage{graphicx}
\usepackage{array}
\usepackage{longtable}
\usepackage{booktabs}
\lstdefinestyle{compactcert}{basicstyle=\ttfamily\scriptsize,columns=fullflexible,breaklines=true,keepspaces=true,showstringspaces=false,frame=single,framerule=0.2pt,xleftmargin=0.5em,aboveskip=0.6em,belowskip=0.6em}
\lstdefinestyle{compactscript}{basicstyle=\ttfamily\scriptsize,columns=fullflexible,breaklines=true,keepspaces=true,showstringspaces=false,frame=single,framerule=0.2pt,xleftmargin=0.5em,aboveskip=0.6em,belowskip=0.6em,language=Python}
\hypersetup{colorlinks=true,linkcolor=blue,citecolor=blue,urlcolor=blue}
\newtheorem{theorem}{Theorem}
\newtheorem{lemma}[theorem]{Lemma}
\newtheorem{proposition}[theorem]{Proposition}
\newtheorem{corollary}[theorem]{Corollary}
\newtheorem{definition}[theorem]{Definition}
\newtheorem{remark}[theorem]{Remark}

\newcommand{\cA}{\mathcal A}
\newcommand{\cB}{\mathcal B}
\newcommand{\cC}{\mathcal C}
\newcommand{\cD}{\mathcal D}
\newcommand{\cE}{\mathcal E}
\newcommand{\cF}{\mathcal F}
\newcommand{\cG}{\mathcal G}
\newcommand{\cHh}{\mathcal H}
\newcommand{\cI}{\mathcal I}
\newcommand{\cJ}{\mathcal J}
\newcommand{\1}{\mathbf{1}}
\newcommand{\ZZ}{\mathbb Z}
\newcommand{\Loop}{\operatorname{loop}}
\newcommand{\Leb}{\mathcal L}

\title{On the independence number of de Bruijn graphs}
\date{}

\author{Pietro Majer}
\address{Dipartimento di Matematica, Universit\`a di Pisa, Largo Bruno Pontecorvo 5, 56127 Pisa, Italy}
\email{pietro.majer@unipi.it}

\author{Matteo Novaga}
\address{Dipartimento di Matematica, Universit\`a di Pisa, Largo Bruno Pontecorvo 5, 56127 Pisa, Italy}
\email{matteo.novaga@unipi.it}

\subjclass[2020]{05C35, 05C69, 68R10}
\keywords{de Bruijn graph, independence number, variational methods}

\begin{document}

\begin{abstract}
We derive the asymptotic formula $\alpha(k,q)=\lambda_{k-1}q^k+o(q^k)$,
where $\alpha(k,q)$ is the independence number of the de Bruijn graph $B(k,q)$,
and $\lambda_{k-1}$ is a constant arising from a variational problem on the unit $(k-1)$-dimensional cube. 
When $k=4$, we show the bounds
$91/240\le \lambda_3\le 11/28$.
For odd prime $k$, we analyse the binary case $q=2$ via a phase reduction on rotation orbits. For $k=11,13,17$ this yields compact orbit-marker certificates for optimal constructions. Combined with a lifting theorem by Lichiardopol, these certificates give exact formulas for $\alpha(11,q)$, $\alpha(13,q)$, and $\alpha(17,q)$ for all $q\ge2$, extending the known cases $k=3,5,7$.
\end{abstract}

\maketitle
\tableofcontents

\section{Introduction}

De Bruijn graphs are a classical object in combinatorics, graph theory, and information theory. Their origin goes back to de Bruijn's paper \cite{deB46}, while the Eulerian perspective was developed further by van Aardenne-Ehrenfest and de Bruijn in \cite{AvdB51}. Standard references include Fredricksen's survey \cite{Fred82}, the necklace construction of Fredricksen and Maiorana \cite{FM78}, Ralston's expository article \cite{Ral82}, the survey chapter of Du, Cao, and Hsu on de Bruijn and Kautz digraphs \cite{DCH96}, and the recent monograph of Etzion \cite{Etz24}.

In this paper we study the independence number of de Bruijn graphs. This problem lies at the intersection of graph theory and coding theory: independent sets in de Bruijn graphs encode collections of words that avoid prescribed overlap patterns, and in several cases they are closely related to comma-free codes. Exact formulas are known only in a limited number of lengths, while the general asymptotic behaviour for fixed word length is governed by the variational constants introduced by Trotter and Winkler \cite{TW98} and further analysed by the authors in \cite{MN12}.

For integers $k,q\ge 2$, we denote by $B(k,q)$ the de Bruijn digraph with vertex set $[q]^k$, where
\[
[q]:=\{0,1,\dots,q-1\},
\]
and with directed edges
\[
x_1x_2\dots x_k \longrightarrow x_2x_3\dots x_k y,
\qquad y\in[q].
\]
Thus, for us the first parameter is the word length and the second is the alphabet size. We notice that other conventions can be found in the literature: for instance, the underlying simple graph is denoted $UB(q,k)$ in \cite{Lich06}, while the same graph appears as $B(q,k)$ in \cite{CCT10}.

We denote by $\alpha(k,q)$ the independence number of the \emph{simple} graph obtained from $B(k,q)$ after deleting self-loops. Two vertices
\[
u=x_1x_2\dots x_k,
\qquad
v=y_1y_2\dots y_k
\]
are adjacent in the underlying graph if and only if one of the overlap relations
\[
x_2x_3\dots x_k=y_1y_2\dots y_{k-1}
\qquad\text{or}\qquad
y_2y_3\dots y_k=x_1x_2\dots x_{k-1}
\]
holds. 
The graph $B(k,q)$ has exactly $q$ looped vertices, namely the constant words
\[
0^k,\ 1^k,\ \dots,\ (q-1)^k.
\]
When these self-loops are retained we write $\alpha_{\Loop}(k,q)$ for the corresponding independence number. Since only the $q$ constant vertices are affected, one has
\[
\alpha_{\Loop}(k,q)\le \alpha(k,q)\le \alpha_{\Loop}(k,q)+q.
\]
In particular, for fixed $k$ the looped and loopless models have the same leading asymptotic coefficient as $q\to\infty$.

The independence problem for de Bruijn graphs has both a continuum asymptotic side and a finite, explicitly certifiable side, and the two viewpoints reinforce each other.

First, we show that for every fixed $k\ge 2$ the leading term of $\alpha(k,q)$, as $q\to\infty$, is given exactly by a variational constant $\lambda_{k-1}$. More precisely, as $q\to +\infty$ one has
\[
\alpha(k,q)=\lambda_{k-1}q^k+o(q^k).
\]
This identifies the asymptotic independence problem on de Bruijn graphs with the variational problem studied in \cite{TW98,MN12}. 

Second, we focus on the first unresolved case, namely $k=4$. Here the aim is not an exact formula, but a sharper understanding of the extremal coefficient. We prove explicit bounds on $\lambda_3$, combining a soft upper bound from a seven-cycle inequality with a constructive lower bound obtained from an explicit base configuration and an inductive dyadic lift.

Third, we return to exact finite constructions in the binary prime cases $k=11$, $k=13$, and $k=17$. The phase reduction on rotation orbits shows that an extremal example is determined by one phase choice on each non-trivial orbit; the remaining task is finite and computer-verifiable. We record compact orbit-marker certificates for all three cases and deduce exact formulas for the independence number for every alphabet size $q\ge 2$.

The case $k=4$ sits between several lengths for which the independence number is already understood exactly.

\begin{theorem}[see \cite{Lich06,CCT10}]\label{thm:length3}
There holds
\[
\begin{aligned}
&\alpha(2,2)=2, &&\alpha_{\Loop}(2,2)=1,&
\\
&\alpha(2,3)=3, &&\alpha_{\Loop}(2,3)=2,
\\
&\alpha(2,q)=\alpha_{\Loop}(2,q)=\Bigl\lfloor\frac{q^2}{4}\Bigr\rfloor &&\text{for every $q\ge4$,}
\\
&\alpha(3,q)=\frac{q^3-q}{3}+1, &&\alpha_{\Loop}(3,q)=\frac{q^3-q}{3}.
\\
&\alpha(5,q)=\frac{2(q^5-q)}{5}+1, &&\alpha_{\Loop}(5,q)=\frac{2(q^5-q)}{5},
\\
&\alpha(7,q)=\frac{3(q^7-q)}{7}+1, &&\alpha_{\Loop}(7,q)=\frac{3(q^7-q)}{7}.
\end{aligned}
\]
Moreover, if $k\ge3$ is an odd prime, then for every $q\ge2$ we have
\[
\alpha(k,q)\le \frac{(k-1)(q^k-q)}{2k}+1,
\qquad
\alpha_{\Loop}(k,q)\le \frac{(k-1)(q^k-q)}{2k}.
\]
\end{theorem}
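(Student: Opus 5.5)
The plan is to prove the general upper bound for odd prime $k$ first, by a cycle-packing argument on rotation orbits. After that, the small cases and the exact values for $k=3,5,7$ follow from explicit constructions that meet the bound.

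For the upper bound, fix an odd prime $k$ and consider the cyclic shift $\sigma(x_1\dots x_k)=x_2\dots x_kx_1$. Since $\sigma(u)$ is always an out-neighbour of $u$ in $B(k,q)$, each $\sigma$-orbit is a closed walk in the underlying graph. Because $k$ is prime, every non-constant word has orbit of size exactly $k$, and these $k$ words are pairwise distinct. So the $q^k-q$ non-constant vertices split into $(q^k-q)/k$ vertex-disjoint cycles $C_k$ (the cycle may have chords, which only helps). An independent set meets each odd cycle $C_k$ in at most $(k-1)/2$ vertices, which gives at most $(k-1)(q^k-q)/(2k)$ non-constant vertices. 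In the looped model no constant word can be chosen, which gives the bound for $\alpha_{\Loop}$. In the loopless model, the constant words $a^k$ and $b^k$ are never adjacent when $a\ne b$, so several of them can a priori be chosen. The refinement I would use is this: if $a^k$ is chosen, then the orbit of $a^{k-1}b$ contains the neighbours $a^{k-1}b$ and $ba^{k-1}$ of $a^k$. These two words are consecutive on that cycle (in the order $ba^{k-1}\to a^{k-1}b$ under $\sigma$), so the cycle loses two adjacent vertices. What remains is a path on $k-2$ vertices, which carries at most $(k-1)/2$ independent vertices, so there is no loss yet. This is the main obstacle. The per-orbit count has to be sharpened so that each additional constant word chosen beyond the first forces a deficit of at least one elsewhere, for example on the orbit of $a^{k-1}b$ when both $a^k$ and $b^k$ are chosen (that orbit then loses $a^{k-1}b$, $ba^{k-1}$, and also $ab^{k-1}$, $b^{k-1}a$ from the orbit of $b^{k-1}a$). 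Failing a direct count, I would simply cite the argument of \cite{Lich06,CCT10}, as the theorem statement itself does.

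For the lower bounds at $k=3,5,7$, I would exhibit, for each non-trivial orbit, a choice of $(k-1)/2$ words that is independent across orbits, plus one constant word in the loopless case. A natural scheme uses the lexicographically least rotation (Lyndon word) $w$ of each orbit and selects the rotations whose "phase" relative to $w$ lies in a fixed set $P\subset\ZZ/k$ of size $(k-1)/2$. Here one must verify that two chosen words never overlap in $k-1$ letters. For $k=3$ the choice is the single rotation beginning with the minimal letter, ties broken suitably. For $k=5,7$, I expect the required phase sets to be found by finite search and checked by a case analysis on letter-comparison patterns, which depends only on the order type of the letters and is therefore uniform in $q$ (a lifting argument in the spirit of Lichiardopol). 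This settles the formulas for $k=3,5,7$.

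The cases $k=2$ are direct: for $q\ge4$, take all words $xy$ with $x\in A$, $y\notin A$ and $|A|=\lfloor q/2\rfloor$. This gives $\lfloor q^2/4\rfloor$, and the upper bound follows from a Turán/bipartite-type count on the pairs $\{xy,yx\}$ together with the transitive-tournament structure. The values $q=2,3$ are checked by hand.
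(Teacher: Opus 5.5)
The paper does not prove this theorem; it quotes it from \cite{Lich06,CCT10}. The only part it re-derives is the looped bound $\alpha_{\Loop}(k,q)\le\frac{(k-1)(q^k-q)}{2k}$ for odd prime $k$ (Proposition~\ref{prop:odd-no-qkm1}), and your orbit-packing argument is exactly that proof and is correct.

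The loopless $+1$, however, is a genuine gap, and it cannot come from sharpening the per-orbit count. A chosen constant $a^k$ removes exactly two consecutive vertices from each orbit of $a^{k-1}c$, $c\neq a$. For $k\ge3$ these orbits are different from the ones touched by another constant $b^k$; note that $ab^{k-1}$ and $b^{k-1}a$ do not lie in the orbit of $a^{k-1}b$. Each touched orbit still leaves a path on $k-2$ vertices, which carries $(k-1)/2$ independent vertices. So the loss can only come from edges between different orbits. For example, with $k=3$, $q=2$, once $000$ and $111$ are chosen, the only remaining candidates are $010$ and $101$, and they are adjacent.

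One clean way to organise the missing step is to reduce to the binary case. If $a_1^k,\dots,a_m^k\in J$, then for $1\le i<m$ the words over $\{a_i,a_{i+1}\}$ induce copies of the simple graph of $B(k,2)$, and their non-constant parts are pairwise disjoint unions of orbits. So it suffices to know that in $B(k,2)$ the two constants together force a loss of at least one vertex on the binary orbits, which is the bound \eqref{eq:binary-upper-prime}. That binary statement is the real content. The paper imports it from Propositions~4.2 and~4.3(b) of \cite{Lich06}, and your proposal has no argument for it. The paper's own Proposition~\ref{prop:exact-reduction} only gives $+q$.

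The lower-bound constructions also fail as stated.

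For $k=3$ and $q\ge3$, the words $012$ and $122$ are each the unique rotation of their orbit beginning with the minimal letter, yet $012\to122$ is an edge. The correct rule is the one in the proof of Proposition~\ref{prop:odd-no-qkm1}: take the rotation whose first maximum sits in the middle. For $k=3$ this gives exactly $\sum_{M=0}^{q-1}M(M+1)=(q^3-q)/3$ words, and $0^3$ can be added.

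For $k=5$, the Lyndon-anchored fixed-phase scheme is not merely unverified but false already at $q=2$. Write the selected words as $\rho^r(w)$, $r\in P$, with $w$ the Lyndon word.
\begin{itemize}
\item Including $0^5$ forces $P=\{1,3\}$ (orbit of $00001$). This selects the adjacent pair $\rho^3(01111)=11011$ and $\rho(01011)=10110$.
\item Including $1^5$ forces $P=\{2,4\}$ (orbit of $01111$). This selects the adjacent pair $\rho^4(00101)=10010$ and $\rho^2(00001)=00100$.
\item The remaining three pairs fail even without a constant: $\{0,2\}$ and $\{0,3\}$ give $00001\sim00011$, and $\{1,4\}$ gives $10001\sim00010$.
\end{itemize}
So the phase must vary from orbit to orbit. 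What works, and what the paper does for $k=11,13,17$, is to solve the phase-constraint problem of Theorem~\ref{thm:phase-constraints} at $q=2$ with exactly one loop vertex. One then applies Lichiardopol's lifting theorem \cite[Theorem~4.4]{Lich06} to reach every $q\ge2$. Your idea of checking a comparison-defined rule over all order types of $k+1$ letters is a legitimate uniform-in-$q$ verification in principle, but you have no rule that passes it.

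Finally, for $k=2$ and $q\ge4$ you still have to dispose of constant words. Choosing $aa$ forbids every word containing $a$, so with $c$ constants $|J|\le c+\lfloor (q-c)^2/4\rfloor\le\max\{q,\lfloor q^2/4\rfloor\}$. Also, the relevant structure is not a transitive tournament. It is the absence of directed paths $x\to y\to z$, which makes every letter a pure source or a pure sink.
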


In Theorems~\ref{thm:k11-binary}, \ref{thm:k13-binary}, and \ref{thm:k17-binary} below, we extend the previous result to the next prime cases $k=11$, $k=13$, and $k=17$, and prove that
\[
\alpha(11,q)=\frac{5(q^{11}-q)}{11}+1,
\quad
\alpha_{\Loop}(11,q)=\frac{5(q^{11}-q)}{11},
\]
and
\[
\alpha(13,q)=\frac{6(q^{13}-q)}{13}+1,
\quad
\alpha_{\Loop}(13,q)=\frac{6(q^{13}-q)}{13},
\]
while the same method gives
\[
\alpha(17,q)=\frac{8(q^{17}-q)}{17}+1,
\quad
\alpha_{\Loop}(17,q)=\frac{8(q^{17}-q)}{17}.
\]
By contrast, no exact formula is presently known for $\alpha(4,q)$. This makes the length $4$ case the first genuinely open instance, and one of the main motivations of the present work is to determine explicit bounds for its leading asymptotic coefficient.

\smallskip

For every integer $m\ge 1$ and every measurable set $A\subseteq[0,1]^m$, we now define
\begin{equation}\label{eqvar}
\Phi_m(A):=\int_{[0,1]^{m+1}} \1_A(x_1,\dots,x_m)\bigl(1-\1_A(x_2,\dots,x_{m+1})\bigr)\,dx_1\cdots dx_{m+1},
\end{equation}
and set
\begin{equation}\label{eqlam}
\lambda_m:=\sup\{\Phi_m(A): A\subseteq[0,1]^m\text{ measurable}\}.
\end{equation}
These constants were first introduced in \cite{TW98}, with a different definition, and the variational characterization in \eqref{eqvar} was later given in \cite{MN12}. As stated above,
these constants also govern the independence number of de Bruijn graphs for large $q$: for every fixed $k\ge2$, the asymptotic coefficient of $\alpha(k,q)$ is exactly $\lambda_{k-1}$.

Trotter and Winkler showed in \cite{TW98} the strict monotonicity of the sequence $\lambda_m$, and proved the bounds
\[
\frac m{2m+2}\le \lambda_m\le \frac m{2m+1},
\]
together with the identities $\lambda_1=1/4$, $\lambda_2=1/3$ and the inequality $\lambda_5\ge 27/64$.
The authors of this paper showed in \cite{MN12} that
\[
\lambda_m=\frac m{2m+2}
\qquad\text{for every even }m,
\]
and therefore for every odd word length $k$ one obtains the exact asymptotic coefficient
\[
\frac{\alpha(k,q)}{q^k}\longrightarrow \lambda_{k-1}=\frac{k-1}{2k}.
\]
Moreover, Proposition~\ref{prop:odd-no-qkm1} below yields a refined upper bound for odd $k$, while preserving the explicit lower bound coming from the elementary construction:
\[
\frac{k-1}{2k}q^k-O(q^{k-2})
\le \alpha(k,q)
\le \frac{k-1}{2k}q^k+O(q^{\delta(k)}),
\]
where the parameter $\delta(k)$ is defined in Proposition~\ref{prop:odd-no-qkm1}: it equals $1$ if $k$ is prime and $k/p$ if $k$ is composite, with $p$ the smallest prime divisor of $k$. In particular, for odd prime $k$ the error term in the upper bound is of order $O(q)$, consistently with the exact formulas quoted above for $k=3,5,7,11,13,17$.

The even-length case is more delicate. Already for $k=4$, the general bounds imply only
\[
\frac38\le \lambda_3<\frac25.
\]
Our second main result improves this interval to
\[
\frac{91}{240}\le \lambda_3\le \frac{11}{28},
\]
and consequently
\[
\frac{91}{240}q^4+o(q^4)\le \alpha(4,q)\le \frac{11}{28}q^4+o(q^4).
\]
Thus, although the exact value of $\lambda_3$ remains open, the admissible range becomes substantially narrower.

\smallskip

For the reader's convenience, we summarise the main contributions of the paper.

\begin{enumerate}
\item[1)] We prove that for every fixed $k\ge2$ the independence number of the de Bruijn graph satisfies
\[
\alpha(k,q)=\lambda_{k-1}q^k+o(q^k),
\]
thereby identifying the exact asymptotic coefficient with the continuum variational constant $\lambda_{k-1}$.

\item[2)] In the case $k=4$, we establish the explicit bounds
\[
\frac{91}{240}\le \lambda_3\le \frac{11}{28}.
\]
The upper bound is obtained from a combinatorial inequality on a $7$-cycle, while the lower bound comes from an inductive dyadic construction starting from an explicit finite configuration.

\item[3)] For binary words of prime length, we introduce a phase reduction on rotation orbits. Combined with certified constructions, this yields exact formulas for $\alpha(11,q)$, $\alpha(13,q)$, and $\alpha(17,q)$ for all $q\ge2$.
\end{enumerate}

\smallskip

The paper is organised as follows. In Section~\ref{sec:asymptotic} we reformulate the discrete extremal problem in a way that makes the connection with the variational constants $\lambda_m$ transparent, and we prove the asymptotic formula for every fixed $k$. Section~\ref{sec:k4} is devoted to the case $k=4$: we first derive the upper bound for $\lambda_3$, then construct the dyadic lower bound, and finally translate these bounds into corresponding estimates for $\alpha(4,q)$. To keep the main argument readable, the large explicit $q=16$ data defining the base configuration are recorded separately in Appendix~\ref{app:q16}, where we record both the explicit data defining the base configuration and the finite verification attached to it. In Section~\ref{sec:odd-prime-binary} we recall the phase reduction for odd prime binary lengths and use it to record the certified cases $k=11$, $k=13$, and $k=17$. Appendix~\ref{app:certificates} records the compact certificates for $k=11$, $k=13$, and $k=17$, and gives a short verifier script.

\subsection*{Acknowledgements}
The authors acknowledge support from the MIUR Excellence Department Project awarded to the Department of Mathematics, University of Pisa, CUP I57G22000700001. M.N. is a member of INDAM-GNAMPA.

\section{The asymptotic coefficient}\label{sec:asymptotic}

Fix integers $k\ge2$ and $q\ge1$. Independent sets in $B(k,q)$ are naturally encoded by subsets of $[q]^{k-1}$.

\begin{definition}
For $S\subseteq[q]^{k-1}$, define
\[
\mathcal I_{k,q}(S):=
\bigl\{(x_1,\dots,x_k)\in[q]^k:
(x_1,\dots,x_{k-1})\in S,\ (x_2,\dots,x_k)\notin S\bigr\}.
\]
Equivalently, $\mathcal I_{k,q}(S)$ consists of the words of length $k$ whose prefix of length $k-1$ belongs to $S$ and whose suffix of length $k-1$ does not. We also set
\[
N_{k,q}(S):=|\mathcal I_{k,q}(S)|.
\]
\end{definition}

\begin{proposition}\label{prop:exact-reduction}
Define
\[
M_{k,q}:=\max\bigl\{N_{k,q}(S):S\subseteq[q]^{k-1}\bigr\}.
\]
Then
\[
M_{k,q}=\alpha_{\Loop}(k,q).
\]
Moreover,
\[
M_{k,q}\le \alpha(k,q)\le M_{k,q}+q,
\]
so in particular
\[
\alpha(k,q)=M_{k,q}+O(q)
\qquad (k\ \text{fixed},\ q\to\infty).
\]
\end{proposition}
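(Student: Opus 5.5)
The plan is to prove the identity $M_{k,q}=\alpha_{\Loop}(k,q)$ by two inequalities, each coming from an explicit correspondence between independent sets of the looped graph and subsets $S\subseteq[q]^{k-1}$. The remaining claims then follow from the sandwich inequality recorded in the introduction.

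\emph{Step 1: $M_{k,q}\le\alpha_{\Loop}(k,q)$.} Fix $S$. I will show that $\mathcal I_{k,q}(S)$ is independent in the looped graph.
\begin{itemize}
\item \emph{No constant words.} A constant word $c^k$ has prefix and suffix both equal to $c^{k-1}$. It cannot have its prefix in $S$ and its suffix outside $S$, so $\mathcal I_{k,q}(S)$ contains no looped vertex.
\item \emph{No edges.} Suppose $u=x_1\dots x_k$ and $v=y_1\dots y_k$ both lie in $\mathcal I_{k,q}(S)$ and $x_2\dots x_k=y_1\dots y_{k-1}$. This common $(k-1)$-word is the suffix of $u$, hence not in $S$. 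It is also the prefix of $v$, hence in $S$. This is a contradiction.
\end{itemize}
The symmetric overlap relation is handled in the same way, so $N_{k,q}(S)\le\alpha_{\Loop}(k,q)$.

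\emph{Step 2: $\alpha_{\Loop}(k,q)\le M_{k,q}$.} Let $I$ be a maximum independent set in the looped graph; in particular $I$ contains no constant word. Let $S$ be the set of prefixes $(x_1,\dots,x_{k-1})$ of words in $I$. I claim $I\subseteq\mathcal I_{k,q}(S)$, which gives $|I|\le N_{k,q}(S)\le M_{k,q}$.
\begin{itemize}
\item Each $u\in I$ has its prefix in $S$ by construction.
\item Suppose the suffix of $u$ lay in $S$. Then it equals the prefix of some $v\in I$, so $u\to v$ is an edge of $B(k,q)$.
\item If $v\ne u$, this contradicts independence.
\item If $v=u$, then $x_2\dots x_k=x_1\dots x_{k-1}$, which forces $u$ to be constant, i.e.\ looped. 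This is excluded because $u\in I$.
\end{itemize}
This step is the only place where any care is needed: the looped model is exactly what rules out the case $v=u$.

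\emph{Step 3: the sandwich and the asymptotic.} Every independent set of the looped graph is independent in the loopless graph, so $\alpha_{\Loop}(k,q)\le\alpha(k,q)$. Conversely, an independent set of the loopless graph, with its at most $q$ constant words removed, is independent in the looped graph, so $\alpha(k,q)\le\alpha_{\Loop}(k,q)+q$. Combining these with $M_{k,q}=\alpha_{\Loop}(k,q)$ gives $M_{k,q}\le\alpha(k,q)\le M_{k,q}+q$, hence $\alpha(k,q)=M_{k,q}+O(q)$.
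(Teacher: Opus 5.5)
Your proposal is correct and follows the same route as the paper: you show that $\mathcal I_{k,q}(S)$ is independent in the looped graph, you recover a set $S$ from any looped independent set as its set of $(k-1)$-prefixes, and you finish with the comparison between the looped and loopless models, which differ only on the $q$ constant vertices. Your explicit treatment of the case $v=u$ in Step~2 is a small clarification of a point the paper leaves implicit: that case forces $u$ to be a constant, looped word, which cannot lie in a looped independent set.
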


\begin{proof}
Fix $S\subseteq[q]^{k-1}$. $\mathcal I_{k,q}(S)$ is an independent set in the graph with loops retained. Let
\[
x=(x_1,\dots,x_k)\in \mathcal I_{k,q}(S).
\]
By definition,
\[
(x_1,\dots,x_{k-1})\in S,
\qquad
(x_2,\dots,x_k)\notin S.
\]
Suppose that a successor
\[
y=(x_2,\dots,x_k,t)
\qquad (t\in[q])
\]
also belonged to $\mathcal I_{k,q}(S)$. Then, by the defining condition for $\mathcal I_{k,q}(S)$, its prefix $(x_2,\dots,x_k)$ would have to lie in $S$, a contradiction. Thus $x$ is adjacent to no successor in $\mathcal I_{k,q}(S)$. The same argument applied to predecessors shows that no predecessor of $x$ can belong to $\mathcal I_{k,q}(S)$ either. Hence no two vertices of $\mathcal I_{k,q}(S)$ are adjacent.

A looped constant vertex cannot occur in $\mathcal I_{k,q}(S)$, because a word $a^k$ would require simultaneously $a^{k-1}\in S$ and $a^{k-1}\notin S$. Therefore $\mathcal I_{k,q}(S)$ is independent in the looped model, and so
\[
\alpha_{\Loop}(k,q)\ge |\mathcal I_{k,q}(S)|=N_{k,q}(S).
\]
Taking the maximum over $S$ yields
\[
\alpha_{\Loop}(k,q)\ge M_{k,q}.
\]

Conversely, let $J$ be an independent set in the graph with loops retained. Define
\[
S_J:=\bigl\{(x_1,\dots,x_{k-1})\in[q]^{k-1}:\exists x_k\in[q]\text{ such that }(x_1,\dots,x_k)\in J\bigr\}.
\]
If $(x_1,\dots,x_k)\in J$, then $(x_1,\dots,x_{k-1})\in S_J$ by construction. We have $(x_2,\dots,x_k)\notin S_J$, for otherwise there would exist $t\in[q]$ such that $(x_2,\dots,x_k,t)\in J$, and these two vertices would be adjacent, contradicting the independence of $J$. Therefore every vertex of $J$ belongs to $\mathcal I_{k,q}(S_J)$, hence
\[
|J|\le |\mathcal I_{k,q}(S_J)|=N_{k,q}(S_J)\le M_{k,q}.
\]
Taking the maximum over all looped independent sets $J$ gives
\[
\alpha_{\Loop}(k,q)\le M_{k,q}.
\]
Combining the two inequalities proves the identity $M_{k,q}=\alpha_{\Loop}(k,q)$.

Finally, passing from the looped graph to the simple graph only affects the $q$ constant vertices $a^k$. Hence one may gain at most $q$ additional vertices when the loops are deleted, and therefore
\[
M_{k,q}=\alpha_{\Loop}(k,q)\le \alpha(k,q)\le \alpha_{\Loop}(k,q)+q=M_{k,q}+q.
\]
\end{proof}
Thus $M_{k,q}$ is the basic discrete quantity. Once its asymptotics are known, those of $\alpha(k,q)$ follow from the additive error term $q$.

For $\mathbf u=(u_1,\dots,u_{k-2})\in[q]^{k-2}$, we define
\[
I_S(\mathbf u):=\#\bigl\{a\in[q]:(a,u_1,\dots,u_{k-2})\in S\bigr\},
\]
\[
O_S(\mathbf u):=\#\bigl\{b\in[q]:(u_1,\dots,u_{k-2},b)\in S\bigr\}.
\]
When $k=2$, the index set $[q]^0$ is understood as the singleton $\{\emptyset\}$, so $I_S(\emptyset)=O_S(\emptyset)=|S|$.

A direct counting argument gives
\begin{equation}\label{eq:Nsum-general}
N_{k,q}(S)=\sum_{\mathbf u\in[q]^{k-2}} I_S(\mathbf u)\bigl(q-O_S(\mathbf u)\bigr).
\end{equation}
Indeed, for each fixed middle block $\mathbf u$, a word counted by $N_{k,q}(S)$ is obtained by choosing a prefix letter $a$ such that $(a,\mathbf u)\in S$ and a suffix letter $b$ such that $(\mathbf u,b)\notin S$.


For $S\subseteq[q]^{k-1}$ and $(a_1,\dots,a_{k-1})\in[q]^{k-1}$ we let
\[
Q_{a_1\dots a_{k-1}}^{(q)}:=\prod_{j=1}^{k-1}\Bigl[\frac{a_j}{q},\frac{a_j+1}{q}\Bigr)\subseteq[0,1]^{k-1},
\]
and 
\[
A_S:=\bigcup_{(a_1,\dots,a_{k-1})\in S} Q_{a_1\dots a_{k-1}}^{(q)}\subseteq[0,1]^{k-1}.
\]
Similarly, for $(a_1,\dots,a_k)\in[q]^k$ let
\[
Q_{a_1\dots a_k}^{(q)}:=\prod_{j=1}^{k}\Bigl[\frac{a_j}{q},\frac{a_j+1}{q}\Bigr)\subseteq[0,1]^k.
\]

\begin{lemma}\label{lem:correspondence}
For every $S\subseteq[q]^{k-1}$ one has
\[
\Phi_{k-1}(A_S)=\Lambda_{k,q}(S):=\frac{N_{k,q}(S)}{q^k}.
\]
\end{lemma}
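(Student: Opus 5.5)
The plan is to compute $\Phi_{k-1}(A_S)$ by splitting the cube $[0,1]^k$ into the $q^k$ half-open cells $Q^{(q)}_{a_1\dots a_k}$, with $(a_1,\dots,a_k)\in[q]^k$. These cells are pairwise disjoint, their union is $[0,1)^k$, and the complement of that union in $[0,1]^k$ is Lebesgue-null. Each cell has measure $q^{-k}$. It will therefore suffice to show that the integrand
\[
\1_{A_S}(x_1,\dots,x_{k-1})\bigl(1-\1_{A_S}(x_2,\dots,x_k)\bigr)
\]
is constant on each cell, and to identify that constant.

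Fix a cell $Q^{(q)}_{a_1\dots a_k}$ and a point $x$ in it. Then $\lfloor q x_j\rfloor=a_j$ for every $j$. Since the cells $Q^{(q)}_{b}$, $b\in[q]^{k-1}$, are disjoint and tile $[0,1)^{k-1}$, a point $y\in[0,1)^{k-1}$ lies in $A_S$ exactly when $(\lfloor qy_1\rfloor,\dots,\lfloor qy_{k-1}\rfloor)\in S$. Applying this to the prefix $y=(x_1,\dots,x_{k-1})$ and to the suffix $y=(x_2,\dots,x_k)$ gives two facts:
\[
\1_{A_S}(x_1,\dots,x_{k-1})=\1_S(a_1,\dots,a_{k-1}),
\qquad
\1_{A_S}(x_2,\dots,x_k)=\1_S(a_2,\dots,a_k).
\]
So on this cell the integrand equals the indicator that $(a_1,\dots,a_k)\in\mathcal I_{k,q}(S)$.

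Integrating cell by cell then gives
\[
\Phi_{k-1}(A_S)=q^{-k}\,\#\mathcal I_{k,q}(S)=N_{k,q}(S)/q^k,
\]
which is the claim.

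The only point needing care is the boundary of the cube. Points with some coordinate equal to $1$ lie in none of the half-open cells, but they form a set of measure zero, so they do not affect the integral. Beyond that, the argument is straightforward bookkeeping.
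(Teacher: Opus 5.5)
Your proposal is correct and follows the paper's proof essentially verbatim: tile the cube into the $q^k$ cells $Q^{(q)}_{a_1\dots a_k}$ of volume $q^{-k}$, observe that the integrand equals $\1_S(a_1,\dots,a_{k-1})\bigl(1-\1_S(a_2,\dots,a_k)\bigr)$ on each cell, and sum. Your remark that the half-open cells only tile $[0,1)^k$, with the remaining boundary being Lebesgue-null, makes explicit a point the paper's proof passes over silently.
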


\begin{proof}
Partition $[0,1]^k$ into the $q^k$ cubes $Q_{a_1\dots a_k}^{(q)}$. On such a cube the integrand in \eqref{eqvar}, with $m=k-1$, is constant and equals
\[
\1_S(a_1,\dots,a_{k-1})\bigl(1-\1_S(a_2,\dots,a_k)\bigr).
\]
Since each cube has volume $q^{-k}$, summing over all cubes gives
\[
\Phi_{k-1}(A_S)=q^{-k}\sum_{(a_1,\dots,a_k)\in[q]^k}
\1_S(a_1,\dots,a_{k-1})\bigl(1-\1_S(a_2,\dots,a_k)\bigr)
=\frac{N_{k,q}(S)}{q^k}.
\]
\end{proof}

\begin{lemma}\label{lem:l1-continuity}
For all measurable sets $A,B\subseteq[0,1]^{k-1}$,
\[
|\Phi_{k-1}(A)-\Phi_{k-1}(B)|
\le 2\,\|\1_A-\1_B\|_{L^1([0,1]^{k-1})}
=2|A\triangle B|.
\]
\end{lemma}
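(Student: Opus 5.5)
Write $f=\1_A$, $g=\1_B$, $m=k-1$, and view both functions on $[0,1]^{m+1}$ through the two coordinate projections
\[
P(x_1,\dots,x_{m+1})=(x_1,\dots,x_m),\qquad T(x_1,\dots,x_{m+1})=(x_2,\dots,x_{m+1}).
\]
Then $\Phi_{m}(A)=\int f\circ P\,(1-f\circ T)$, and similarly for $B$. The whole argument rests on the fact that both $P$ and $T$ push Lebesgue measure on $[0,1]^{m+1}$ forward to Lebesgue measure on $[0,1]^m$. This holds because integrating out the one missing coordinate over $[0,1]$ contributes a factor $1$. Hence for any integrable $h$ on $[0,1]^m$,
\[
\int_{[0,1]^{m+1}} |h\circ P| = \int_{[0,1]^{m+1}} |h\circ T| = \|h\|_{L^1([0,1]^m)} .
\]

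The key step is the telescoping identity
\[
f_P(1-f_T)-g_P(1-g_T)=(f_P-g_P)(1-f_T)+g_P\,(g_T-f_T),
\]
where $f_P=f\circ P$, $f_T=f\circ T$, and likewise for $g$. Since all functions involved take values in $\{0,1\}$, we have $0\le 1-f_T\le1$ and $0\le g_P\le 1$. Taking absolute values, integrating over $[0,1]^{m+1}$, and using the measure-preservation property above, the first term contributes at most $\int|f_P-g_P|=\|f-g\|_{L^1}$, and the second at most $\int|g_T-f_T|=\|f-g\|_{L^1}$. Summing the two bounds gives
\[
|\Phi_m(A)-\Phi_m(B)|\le 2\|\1_A-\1_B\|_{L^1}.
\]
The final equality $\|\1_A-\1_B\|_{L^1}=|A\triangle B|$ is immediate, since $|\1_A-\1_B|=\1_{A\triangle B}$.

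Nothing here is a genuine obstacle. The only point requiring care is to justify the change of variables under $P$ and $T$, which I would do by Fubini/Tonelli, integrating first in $x_{m+1}$ (respectively $x_1$). When $k=2$ the same argument applies verbatim with $m=1$.
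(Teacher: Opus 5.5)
Your proof is correct and follows the paper's argument essentially verbatim. You use the same splitting $f_P(1-f_T)-g_P(1-g_T)=(f_P-g_P)(1-f_T)+g_P(g_T-f_T)$, bound the bounded factors by $1$, and integrate out the free coordinate; your pushforward formulation under the projections $P$ and $T$ is simply a tidier way of stating the paper's remark that ``one variable is free over an interval of length $1$''.
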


\begin{proof}
Write
\begin{align*}
\Phi_{k-1}(A)-\Phi_{k-1}(B)
&=
\int_{[0,1]^k}
\Bigl(\1_A(x_1,\dots,x_{k-1})(1-\1_A(x_2,\dots,x_k)) \\
&\hspace{4em}-\1_B(x_1,\dots,x_{k-1})(1-\1_B(x_2,\dots,x_k))\Bigr)
\,dx_1\cdots dx_k \\
&=
\int_{[0,1]^k}
\bigl(\1_A(x_1,\dots,x_{k-1})-\1_B(x_1,\dots,x_{k-1})\bigr)
\bigl(1-\1_A(x_2,\dots,x_k)\bigr)
\,dx \\
&\qquad+
\int_{[0,1]^k}
\1_B(x_1,\dots,x_{k-1})
\bigl(\1_B(x_2,\dots,x_k)-\1_A(x_2,\dots,x_k)\bigr)
\,dx.
\end{align*}
Taking absolute values and using $0\le \1_A,\1_B\le 1$ gives
\begin{align*}
|\Phi_{k-1}(A)-\Phi_{k-1}(B)|
&\le \int_{[0,1]^k}\bigl|\1_A(x_1,\dots,x_{k-1})-\1_B(x_1,\dots,x_{k-1})\bigr|\,dx \\
&\qquad+
\int_{[0,1]^k}\bigl|\1_A(x_2,\dots,x_k)-\1_B(x_2,\dots,x_k)\bigr|\,dx.
\end{align*}
In each integral one variable is free over an interval of length $1$, so both terms equal $\|\1_A-\1_B\|_{L^1([0,1]^{k-1})}$. This completes the proof.
\end{proof}

\begin{theorem}\label{thm:limit-exists}
For every fixed $k\ge2$,
\[
\lim_{q\to\infty}\frac{M_{k,q}}{q^k}=\lambda_{k-1}.
\]
Consequently,
\[
\alpha(k,q)=\lambda_{k-1}q^k+o(q^k).
\]
The same asymptotic formula also holds for $\alpha_{\Loop}(k,q)$.
\end{theorem}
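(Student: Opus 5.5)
The plan is to prove two inequalities, $\limsup_{q\to\infty} M_{k,q}/q^k\le\lambda_{k-1}$ and $\liminf_{q\to\infty} M_{k,q}/q^k\ge\lambda_{k-1}$. The statements for $\alpha(k,q)$ and $\alpha_{\Loop}(k,q)$ then follow at once from Proposition~\ref{prop:exact-reduction}, since $\alpha_{\Loop}(k,q)=M_{k,q}$ and $|\alpha(k,q)-M_{k,q}|\le q=o(q^k)$ for $k\ge2$.

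\emph{Upper bound.} For every $S\subseteq[q]^{k-1}$, Lemma~\ref{lem:correspondence} gives
\[
N_{k,q}(S)/q^k=\Phi_{k-1}(A_S)\le\lambda_{k-1},
\]
because $A_S$ is a measurable subset of $[0,1]^{k-1}$. Taking the maximum over $S$ yields $M_{k,q}\le\lambda_{k-1}q^k$ for every $q$. In particular the $\limsup$ is at most $\lambda_{k-1}$.

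\emph{Lower bound.} Fix $\varepsilon>0$ and choose a measurable $A\subseteq[0,1]^{k-1}$ with $\Phi_{k-1}(A)>\lambda_{k-1}-\varepsilon$. For each $q$, let $S_q$ be the set of indices $(a_1,\dots,a_{k-1})$ such that $|A\cap Q^{(q)}_{a_1\dots a_{k-1}}|\ge\tfrac12 q^{-(k-1)}$; this is the majority-vote discretization on the grid of side $1/q$. By Lemma~\ref{lem:correspondence} and Lemma~\ref{lem:l1-continuity},
\[
M_{k,q}/q^k\ge\Phi_{k-1}(A_{S_q})\ge\Phi_{k-1}(A)-2|A\triangle A_{S_q}|.
\]
It therefore suffices to show $|A\triangle A_{S_q}|\to0$ as $q\to\infty$.

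This convergence is the only genuinely analytic step, and I expect it to be the main (mild) obstacle. Let $f_q$ be the average of $\1_A$ over the grid cube containing each point. On each cube $Q$, the contribution of that cube to $|A\triangle A_{S_q}|$ is $|Q|\min(f_q,1-f_q)\le \int_Q|\1_A-f_q|$. Indeed, on a cube with $f_q\ge\frac12$ the integral $\int_Q|\1_A-f_q|$ equals $2|Q|f_q(1-f_q)\ge|Q|(1-f_q)$, and symmetrically when $f_q<\frac12$. Summing over cubes gives $|A\triangle A_{S_q}|\le\|\1_A-f_q\|_{L^1}$, and this tends to $0$ because grid averages converge in $L^1$ for every $L^1$ function. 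To see the last fact, approximate $\1_A$ in $L^1$ by a continuous $g$; uniform continuity gives $g_q\to g$ uniformly, and averaging is an $L^1$ contraction. Alternatively one can invoke the Lebesgue differentiation/martingale theorem along the dyadic-like grids, but the continuous-approximation argument works for all $q$, not just powers of $2$.

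Putting these together, $\liminf_q M_{k,q}/q^k\ge\lambda_{k-1}-\varepsilon$ for every $\varepsilon>0$. This completes the proof of the limit.
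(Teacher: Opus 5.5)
Your proof is correct and follows the same route as the paper. The upper bound comes from Lemma~\ref{lem:correspondence} applied to $A_S$, and the lower bound comes from approximating a near-optimal $A$ by $q$-adic sets and applying Lemma~\ref{lem:l1-continuity}, with Proposition~\ref{prop:exact-reduction} transferring the result to $\alpha$ and $\alpha_{\Loop}$. The only difference is that you construct the approximants explicitly (majority vote on the $q$-grid) and prove $|A\triangle A_{S_q}|\to0$ via $L^1$ convergence of grid averages, which correctly handles the non-nested grids for arbitrary $q$; the paper merely asserts this step.
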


\begin{proof}
Fix $q$ and let $S\subseteq[q]^{k-1}$. By Lemma~\ref{lem:correspondence},
\[
\Phi_{k-1}(A_S)=\frac{N_{k,q}(S)}{q^k}.
\]
Since $\lambda_{k-1}$ is the supremum of $\Phi_{k-1}$ over all measurable sets, this gives
\[
\frac{M_{k,q}}{q^k}\le \lambda_{k-1}
\qquad\text{for every }q,
\]
and hence
\[
\limsup_{q\to\infty}\frac{M_{k,q}}{q^k}\le \lambda_{k-1}.
\]
Conversely, let $\varepsilon>0$. Choose a measurable set $A\subseteq[0,1]^{k-1}$ such that
\[
\Phi_{k-1}(A)>\lambda_{k-1}-\varepsilon.
\]

Choose now $q$-adic sets $A_q$ such that $|A_q\triangle A|\to0$. Lemma~\ref{lem:l1-continuity} then implies that
\[
\Phi_{k-1}(A_q)\to \Phi_{k-1}(A).
\]
Each $A_q$ is of the form $A_{S_q}$ for some $S_q\subseteq[q]^{k-1}$, and therefore
\[
\frac{M_{k,q}}{q^k}\ge \frac{N_{k,q}(S_q)}{q^k}=\Phi_{k-1}(A_q).
\]
For all sufficiently large $q$,
\[
\frac{M_{k,q}}{q^k}\ge \Phi_{k-1}(A_q)>\Phi_{k-1}(A)-\varepsilon>\lambda_{k-1}-2\varepsilon.
\]
Thus
\[
\liminf_{q\to\infty}\frac{M_{k,q}}{q^k}\ge \lambda_{k-1}-2\varepsilon.
\]
Since $\varepsilon>0$ is arbitrary, we conclude that
\[
\liminf_{q\to\infty}\frac{M_{k,q}}{q^k}\ge \lambda_{k-1}.
\]
Combining this with the limsup bound proves
\[
\lim_{q\to\infty}\frac{M_{k,q}}{q^k}=\lambda_{k-1}.
\]
Finally, Proposition~\ref{prop:exact-reduction} gives
\[
0\le \alpha(k,q)-M_{k,q}\le q,
\]
so dividing by $q^k$ and using $M_{k,q}/q^k\to\lambda_{k-1}$ yields
\[
\frac{\alpha(k,q)}{q^k}\to \lambda_{k-1}.
\]
This proves the stated asymptotic formula.
\end{proof}

\begin{proposition}\label{prop:odd-no-qkm1}
Assume that $k$ is odd and define
\[
\delta(k):=
\begin{cases}
1, & \text{if $k$ is prime},\\[0.3em]
\dfrac{k}{p}, & \text{if $k$ is composite, where $p$ is the smallest prime divisor of $k$}.
\end{cases}
\]
Equivalently, for composite $k$, the quantity $\delta(k)$ is the largest proper divisor of $k$.

For each divisor $s\mid k$, let
\[
\eta_s(q):=\frac{1}{s}\sum_{d\mid s}\mu(d)\,q^{s/d},
\]
the number of cyclic rotation orbits in $[q]^k$ of size $s$. Then
\[
M_{k,q}\le U_k(q):=\frac12\sum_{s\mid k}(s-1)\eta_s(q),
\]
and therefore
\[
M_{k,q}\le \frac{k-1}{2k}q^k+O(q^{\delta(k)}).
\]
Moreover,
\[
M_{k,q}\ge \frac{k-1}{2k}q^k-O(q^{k-2}).
\]
Consequently,
\[
\frac{k-1}{2k}q^k-O(q^{k-2})
\le M_{k,q}\le \frac{k-1}{2k}q^k+O(q^{\delta(k)}),
\]
and, by Proposition~\ref{prop:exact-reduction},
\[
\frac{k-1}{2k}q^k-O(q^{k-2})
\le \alpha(k,q)\le \frac{k-1}{2k}q^k+O(q^{\delta(k)}).
\]
In particular, if $k$ is odd prime, then
\[
M_{k,q}\le \frac{k-1}{2k}(q^k-q),
\qquad
\alpha(k,q)\le \frac{k-1}{2k}q^k+O(q).
\]
\end{proposition}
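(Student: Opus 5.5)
The upper bound comes from decomposing $[q]^k$ into rotation orbits. For a looped independent set $J$ of the form $\mathcal I_{k,q}(S)$ (which suffices by Proposition~\ref{prop:exact-reduction}), take a rotation orbit of size $s$. Its words $w,\sigma w,\dots,\sigma^{s-1}w$ form a closed directed cycle of length $s$ in $B(k,q)$, since $x_1\dots x_k\to x_2\dots x_kx_1$ is an edge. For $s\ge2$ this cycle is a genuine cycle in the underlying simple graph. For $s=2$, the two words $ab\cdots$ and $ba\cdots$ are adjacent. For $s=1$ the word is constant, hence looped and excluded. An independent set meets a cycle of length $s$ in at most $\lfloor s/2\rfloor\le (s-1)/2$ vertices when $s$ is odd. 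Since $k$ is odd, every divisor $s$ of $k$ is odd, so each orbit of size $s$ contributes at most $(s-1)/2$. Summing over orbits gives $M_{k,q}\le U_k(q)=\tfrac12\sum_{s\mid k}(s-1)\eta_s(q)$.

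Next comes the expansion of $U_k(q)$. The top term is $s=k$, where $\eta_k(q)=\frac1k\sum_{d\mid k}\mu(d)q^{k/d}=\frac{q^k}{k}+O(q^{k/p})$. The remaining orbit sizes $s<k$ divide $k$ properly, so $s\le k/p$ and $\eta_s(q)=O(q^{s})=O(q^{k/p})$. Hence $U_k(q)=\frac{k-1}{2k}q^k+O(q^{\delta(k)})$. For prime $k$ the only divisors are $1$ and $k$, with $\eta_1$ carrying the factor $s-1=0$ and $\eta_k(q)=(q^k-q)/k$. This gives exactly $U_k(q)=\frac{k-1}{2k}(q^k-q)$, and then $\alpha\le M+q$ yields the $O(q)$ statement.

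For the lower bound I will give an explicit construction of $S\subseteq[q]^{k-1}$ via a discretized continuum optimizer, and evaluate $N_{k,q}(S)$ with \eqref{eq:Nsum-general}. Following the even-$m$ optimizers from \cite{MN12} (with $m=k-1$ even), I try the following set. For $x=(x_1,\dots,x_{k-1})$ consider the ascent/descent pattern. Let $S$ be the set of words whose first strict change in the sequence $x_1x_2\dots x_{k-1}$ occurs at an odd position and is a descent, i.e.\ the classical "first run has odd length" rule, suitably adapted so that prefix-in/suffix-out happens with density $\frac{k-1}{2k}$ among words with distinct letters. The count for words with all letters distinct reduces to a count over permutations of $k$ letters. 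For words $x_1\dots x_k$ with distinct letters, membership depends only on the relative order, so $N_{k,q}(S)=\binom qk\cdot c_k+O(q^{k-1})\cdot(\text{correction})$. Here $c_k$ is the number of permutations of $[k]$ whose prefix pattern is in the rule and whose suffix pattern is not, and I would need $c_k/k!=\frac{k-1}{2k}$. Words with a repeated letter number $O(q^{k-1})$, which a priori only gives error $O(q^{k-1})$. To get $O(q^{k-2})$, I would note that $\binom qk k!=q^k-\binom k2q^{k-1}+O(q^{k-2})$, and then design the rule (using non-strict comparisons with a fixed tie convention) so that the words with exactly one coincidence also achieve density $\frac{k-1}{2k}$. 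The $q^{k-1}$ terms then cancel.

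I expect this last point to be the main obstacle: choosing the tie-breaking convention so that the $q^{k-1}$ coefficient matches exactly. I would first try taking $S$ to be defined by strict versus weak inequalities in alternation, computing the one-coincidence contribution by merging two adjacent equal letters into a $(k-1)$-letter permutation count. If that fails, I would fall back on the construction underlying the Lichiardopol bound for $k=3,5,7$ from \cite{Lich06}.
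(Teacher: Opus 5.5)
Your upper-bound half is correct and is the paper's argument. Each rotation orbit of odd size $s$ carries the directed cycle $w\to\sigma w\to\cdots\to w$, so it meets an independent set in at most $(s-1)/2$ vertices. Your expansion of $U_k(q)$ is also fine, including $U_k(q)=\frac{k-1}{2k}(q^k-q)$ for prime $k$.

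The gap is the lower bound $M_{k,q}\ge\frac{k-1}{2k}q^k-O(q^{k-2})$. You never fix a set $S$, and the rules you sketch fail already at leading order, not just at order $q^{k-1}$.
\begin{itemize}
\item Read literally, ``first strict change at an odd position and a descent'' reduces, on words with distinct letters, to $x_1>x_2$. So $\mathcal I_{k,q}(S)$ consists, up to $O(q^{k-1})$ words, of those with $x_1>x_2<x_3$. That is a proportion $1/3$, which equals $\frac{k-1}{2k}$ only for $k=3$.
\item Read as a parity condition on the length of the initial descending run $x_1>x_2>\cdots$, either parity gives the proportion $11/30<2/5$ on distinct-letter words at $k=5$. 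As $k\to\infty$ the proportion tends to $e^{-1}<\tfrac12$.
\end{itemize}
So the permutation constant $c_k/k!=\frac{k-1}{2k}$ that your plan requires is never attained. The tie-breaking issue you flag as the main obstacle is also left open, with a fallback to an unspecified construction.

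The missing idea is the statistic ``position of the \emph{first} occurrence of the maximum''. Let $S^{\mathrm{ev}}\subseteq[q]^{k-1}$ consist of the words whose first maximal letter is at an even position. Suppose the first global maximum of $x\in[q]^k$ is at position $t$. Then it sits at position $t$ in the prefix when $t\le k-1$, and at position $t-1$ in the suffix when $t\ge2$. For $t=1$ the prefix maximum is at an odd position. Since $k$ is odd, $t=k$ puts the suffix maximum at the even position $k-1$. Hence $\mathcal I_{k,q}(S^{\mathrm{ev}})$ is exactly the set of words whose first global maximum is at an even $t\in\{2,\dots,k-1\}$. Ties are resolved by ``first occurrence'' with no extra convention, and the count is exact:
\[
N_{k,q}(S^{\mathrm{ev}})=\sum_{\substack{2\le t\le k-1\\ t\ \mathrm{even}}}\ \sum_{M=0}^{q-1}M^{t-1}(M+1)^{k-t}.
\]
Expand $M^{t-1}(M+1)^{k-t}=M^{k-1}+(k-t)M^{k-2}+O(M^{k-3})$, and use $\sum_t(k-t)=\frac{(k-1)^2}{4}$ together with Faulhaber's formula. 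The $q^{k-1}$ coefficient is then $-\frac{k-1}{4}+\frac{(k-1)^2}{4}\cdot\frac{1}{k-1}=0$. This gives $\frac{k-1}{2k}q^k+O(q^{k-2})$ directly, with no separate analysis of words with coincident letters.
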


\begin{proof}
We first prove the upper bound for $M_{k,q}=\alpha_{\Loop}(k,q)$.

Let $\sigma:[q]^k\to[q]^k$ be the cyclic shift
\[
\sigma(x_1,\dots,x_k):=(x_2,\dots,x_k,x_1).
\]
For $x\in[q]^k$, let
\[
\mathcal O(x):=\{x,\sigma x,\dots,\sigma^{s-1}x\},
\qquad s:=|\mathcal O(x)|.
\]
Since $\sigma^k=\mathrm{id}$, every orbit size $s$ divides $k$. The vertices in $\mathcal O(x)$ form a directed cycle of length $s$ in $B(k,q)$; when $s=1$ this is a loop, and when $s\ge 2$ it is the usual directed cycle
\[
x\to \sigma x\to \cdots \to \sigma^{s-1}x\to x.
\]
Because $k$ is odd, every divisor $s\mid k$ is odd. Hence the independence number of the induced subgraph on an orbit of size $s$ in the looped model is exactly
\[
\left\lfloor \frac{s}{2}\right\rfloor=\frac{s-1}{2},
\]
with the case $s=1$ included.

Therefore every looped independent set $J\subseteq [q]^k$ satisfies
\[
|J|
\le
\sum_{s\mid k}\frac{s-1}{2}\,\eta_s(q)
=U_k(q),
\]
where $\eta_s(q)$ denotes the number of $\sigma$-orbits of size $s$. Taking the maximum over all looped independent sets gives
\[
M_{k,q}\le U_k(q).
\]

It remains to identify $\eta_s(q)$. A $\sigma$-orbit of size $s$ is the same thing as a word of length $k$ whose minimal period is $s$, modulo cyclic rotation. Such a word is obtained by repeating $k/s$ times a primitive word of length $s$. The number of primitive words of length $s$ is
\[
\sum_{d\mid s}\mu(d)\,q^{s/d},
\]
and dividing by $s$ gives
\[
\eta_s(q)=\frac{1}{s}\sum_{d\mid s}\mu(d)\,q^{s/d}.
\]

Now isolate the contribution of $s=k$:
\[
U_k(q)
=
\frac{k-1}{2}\eta_k(q)
+
\frac12\sum_{\substack{s\mid k\\ s<k}}(s-1)\eta_s(q).
\]
Since
\[
\eta_k(q)=\frac{1}{k}\sum_{d\mid k}\mu(d)\,q^{k/d}
=\frac{1}{k}q^k+O(q^{\delta(k)}),
\]
and every proper divisor $s<k$ satisfies $s\le \delta(k)$, we obtain
\[
U_k(q)=\frac{k-1}{2k}q^k+O(q^{\delta(k)}).
\]
This proves
\[
M_{k,q}\le \frac{k-1}{2k}q^k+O(q^{\delta(k)}).
\]

For the lower bound we use the same explicit family as before. Let
\[
S^{\mathrm{ev}}_{k,q}\subseteq [q]^{k-1}
\]
be the set of $(u_1,\dots,u_{k-1})$ such that the first occurrence of the maximum value among $u_1,\dots,u_{k-1}$ is in an even position. We claim that a word
\[
x=(x_1,\dots,x_k)\in [q]^k
\]
belongs to $\mathcal I_{k,q}(S^{\mathrm{ev}}_{k,q})$ if and only if the first occurrence of the maximum value among $x_1,\dots,x_k$ is in an even position $t\in\{2,4,\dots,k-1\}$.

Indeed, if the first global maximum occurs at such a position $t$, then in the prefix $(x_1,\dots,x_{k-1})$ the first occurrence of the maximum is also at position $t$, whereas in the suffix $(x_2,\dots,x_k)$ it occurs at position $t-1$; hence the prefix lies in $S^{\mathrm{ev}}_{k,q}$ and the suffix does not. Conversely, if $x\in \mathcal I_{k,q}(S^{\mathrm{ev}}_{k,q})$, then the first occurrence of the maximum in the prefix is even and in the suffix is odd, so the first global maximum cannot occur at position $1$ or $k$ and must therefore occur at an even position $t\in\{2,4,\dots,k-1\}$.

Fix such an even position $t$ and let $M\in\{0,\dots,q-1\}$ be the maximum value. Then the first $t-1$ coordinates may be chosen arbitrarily in $\{0,\dots,M-1\}$, the coordinate $x_t$ must equal $M$, and the remaining $k-t$ coordinates may be chosen arbitrarily in $\{0,\dots,M\}$. Hence the number of such words is
\[
M^{t-1}(M+1)^{k-t}.
\]
Summing over all admissible $t$ and $M$ yields
\[
N_{k,q}(S^{\mathrm{ev}}_{k,q})
=
\sum_{\substack{2\le t\le k-1\\ t\ \mathrm{even}}}\ \sum_{M=0}^{q-1}
M^{t-1}(M+1)^{k-t}.
\]
Expanding each summand,
\[
M^{t-1}(M+1)^{k-t}
=
M^{k-1}+(k-t)M^{k-2}+O(M^{k-3}),
\]
where the implied constant depends only on $k$. Since there are $(k-1)/2$ admissible values of $t$, and
\[
\sum_{\substack{2\le t\le k-1\\ t\ \mathrm{even}}}(k-t)
=
\sum_{j=1}^{(k-1)/2}(k-2j)
=
\frac{(k-1)^2}{4},
\]
we get
\[
N_{k,q}(S^{\mathrm{ev}}_{k,q})
=
\frac{k-1}{2}\sum_{M=0}^{q-1}M^{k-1}
+
\frac{(k-1)^2}{4}\sum_{M=0}^{q-1}M^{k-2}
+
O(q^{k-2}).
\]
By Faulhaber's formula,
\[
\sum_{M=0}^{q-1}M^{k-1}
=
\frac{q^k}{k}-\frac12 q^{k-1}+O(q^{k-2}),
\qquad
\sum_{M=0}^{q-1}M^{k-2}
=
\frac{q^{k-1}}{k-1}-\frac12 q^{k-2}+O(q^{k-3}).
\]
Substituting these expansions gives
\[
N_{k,q}(S^{\mathrm{ev}}_{k,q})
=
\frac{k-1}{2k}q^k
+
\left(-\frac{k-1}{4}+\frac{k-1}{4}\right)q^{k-1}
+
O(q^{k-2})
=
\frac{k-1}{2k}q^k+O(q^{k-2}).
\]
Therefore
\[
M_{k,q}\ge N_{k,q}(S^{\mathrm{ev}}_{k,q})
=
\frac{k-1}{2k}q^k-O(q^{k-2}).
\]

Finally, Proposition~\ref{prop:exact-reduction} yields
\[
M_{k,q}\le \alpha(k,q)\le M_{k,q}+q.
\]
Since $\delta(k)\ge 1$, the upper bound for $M_{k,q}$ implies
\[
\alpha(k,q)\le \frac{k-1}{2k}q^k+O(q^{\delta(k)}),
\]
and the lower bound for $M_{k,q}$ gives
\[
\alpha(k,q)\ge \frac{k-1}{2k}q^k-O(q^{k-2}).
\]
This completes the proof.
\end{proof}


\section{Asymptotic bounds for \texorpdfstring{$k=4$}{k=4}}\label{sec:k4}

For the rest of this section we fix \(k=4\). Accordingly, for every integer \(q\ge 1\) and every set \(S\subseteq[q]^3\) we write
\[
N_q(S):=N_{4,q}(S),
\qquad
\Lambda_q(S):=\Lambda_{4,q}(S),
\qquad
\Phi:=\Phi_3.
\]
Our goal is to prove explicit upper and lower bounds for \(\lambda_3\), and hence for the asymptotic independence ratio of \(B(4,q)\).

\subsection{Upper bound via a seven-cycle inequality}\label{sec:upper}
The upper bound in this section is independent of the dyadic construction and works simultaneously in the discrete and continuum settings.

For a finite set \(S\subseteq[q]^3\) write
\[
\rho_q(S):=\frac{|S|}{q^3}.
\]

\begin{lemma}\label{lem:seven-cycle}
For every binary \(7\)-tuple \(y=(y_i)_{i\in \ZZ/7\ZZ}\in\{0,1\}^{\ZZ/7\ZZ}\) one has
\[
\sum_{i\in \ZZ/7\ZZ} y_i\bigl(1-y_{i+1}\bigr)
\le
1+\sum_{i\in \ZZ/7\ZZ} y_i\bigl(1-y_{i+3}\bigr).
\]
\end{lemma}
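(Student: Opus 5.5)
The plan is to read both sides of the inequality as ``number of descents'' of a cyclic binary sequence, once with step $1$ and once with step $3$. Since $7$ is prime and $\gcd(3,7)=1$, the map $i\mapsto 3i$ is a bijection of $\ZZ/7\ZZ$. We may therefore reindex the right-hand sum along the cycle $i,i+3,i+6,\dots$, which visits all seven positions.

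For a cyclic binary sequence $z$ on $\ZZ/n\ZZ$, the quantity $\sum_i z_i(1-z_{i+1})$ equals the number of maximal blocks of ones in $z$ (the number of $10$ transitions). The only exceptions are $z\equiv0$ and $z\equiv1$, where it is $0$. Write $D_1(y)=\sum_i y_i(1-y_{i+1})$ and $D_3(y)=\sum_i y_i(1-y_{i+3})$. Then $D_1$ counts the blocks of ones of $y$ read in the natural cyclic order, and $D_3$ counts the blocks of ones of $y$ read in the order $0,3,6,2,5,1,4$. The constant sequences are trivial, since both sides reduce to $0\le1$, so we assume $1\le w:=|y|\le6$.

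First observe that $D_1\le\min(w,7-w)\le3$, and $D_3\ge1$ for nonconstant $y$. So the inequality can only fail when $D_1=3$ and $D_3=1$, that is, when $w\in\{3,4\}$. By the symmetry $y\mapsto 1-y$ reversed (complementation combined with $i\mapsto -i$), which preserves each sum, the case $w=4$ reduces to $w=3$. Indeed, $\sum (1-y_{-i})y_{-i-1}=\sum y_j(1-y_{j+1})$ after reindexing, and similarly for step $3$.

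So it suffices to show the following: if $y$ has exactly three ones, pairwise non-adjacent on the $7$-cycle, then they are not consecutive in the step-$3$ ordering. Suppose they were. Then the ones would be $\{a,a+3,a+6\}$, and $a+6=a-1$ is adjacent to $a$, contradicting $D_1=3$. This completes the argument. Alternatively, one can simply enumerate the $2^7$ sequences up to rotation, which is a finite check.

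The main obstacle is making sure the block-counting identity and the complementation symmetry are stated correctly. In particular, the step-$3$ reading must be a single $7$-cycle, which holds because $\gcd(3,7)=1$. The case analysis after that is immediate.
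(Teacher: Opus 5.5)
Your proof is correct and follows essentially the same route as the paper. You reduce by the complement symmetry (you combine it with $i\mapsto -i$, which makes invariance of both sums a one-line reindexing, where the paper uses the balance of $1\to0$ and $0\to1$ transitions), use that the step-$3$ count is at least $1$ because $i\mapsto i+3$ is a single $7$-cycle, and rule out the only critical case, three pairwise non-adjacent ones forming a block $\{a,a+3,a+6\}$ in the step-$3$ order.
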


\begin{proof}
Let
\[
B:=\{i\in \ZZ/7\ZZ:y_i=1\}.
\]
For \(s\in\{1,3\}\) define
\[
R_s(B):=\#\{i\in B:i+s\notin B\}.
\]
Then the stated inequality is exactly
\[
R_1(B)\le R_3(B)+1.
\]

For every \(s\in\{1,3\}\) we have \(R_s(B)=R_s(B^c)\): indeed, for the permutation \(i\mapsto i+s\) of \(\ZZ/7\ZZ\), the number of transitions \(1\to 0\) equals the number of transitions \(0\to 1\). Hence we may replace \(B\) by its complement and assume
\[
|B|\le 3.
\]

If \(B=\varnothing\), then \(R_1(B)=R_3(B)=0\) and there is nothing to prove. Assume now that \(B\neq\varnothing\). Since \(i\mapsto i+3\) is a single \(7\)-cycle, some element of \(B\) must exit \(B\) under the step \(+3\), so \(R_3(B)\ge 1\). Therefore the claim is immediate whenever \(R_1(B)\le 2\). It remains only to consider the case
\[
R_1(B)=3.
\]
Because \(|B|\le 3\), this forces \(|B|=3\), and the three elements of \(B\) are pairwise nonadjacent in the usual cycle on \(\ZZ/7\ZZ\).

Assume for contradiction that \(R_3(B)=1\). Along the cyclic order generated by repeated addition of \(3\), the indicator of \(B\) changes from \(1\) to \(0\) exactly once, so \(B\) must be a single contiguous block in that \(+3\)-cycle. Since \(|B|=3\), there exists \(t\in\ZZ/7\ZZ\) such that
\[
B=\{t,t+3,t+6\}.
\]
But \(t\) and \(t+6=t-1\) are adjacent in the usual cycle, contradicting \(R_1(B)=3\). Hence \(R_3(B)\ge 2\), and therefore
\[
R_1(B)=3\le 2+1\le R_3(B)+1.
\]
This completes the proof.
\end{proof}

\begin{proposition}\label{prop:discrete-upper}
For every integer \(q\ge 1\) and every set \(S\subseteq[q]^3\),
\[
\Lambda_q(S)\le \rho_q(S)\bigl(1-\rho_q(S)\bigr)+\frac17\le \frac{11}{28}.
\]
Equivalently,
\[
N_q(S)\le \left(\frac{11}{28}\right)q^4.
\]
\end{proposition}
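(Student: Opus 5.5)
Our plan is to average the seven-cycle inequality of Lemma~\ref{lem:seven-cycle} over suitable $7$-tuples of points of $[q]^3$, reading the left side as a de Bruijn count and the right side as a ``shift-by-$3$'' correlation.

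Fix letters $x_1,\dots,x_7\in[q]$, extended $7$-periodically to $(x_i)_{i\in\ZZ/7\ZZ}$. Set
\[
y_i:=\1_S(x_i,x_{i+1},x_{i+2}),\qquad i\in\ZZ/7\ZZ .
\]
Then $y_i(1-y_{i+1})=\1_S(x_i,x_{i+1},x_{i+2})\bigl(1-\1_S(x_{i+1},x_{i+2},x_{i+3})\bigr)$, which is the indicator that the length-$4$ word $x_ix_{i+1}x_{i+2}x_{i+3}$ lies in $\mathcal I_{4,q}(S)$. Similarly,
\[
y_i(1-y_{i+3})=\1_S(x_i,x_{i+1},x_{i+2})\bigl(1-\1_S(x_{i+3},x_{i+4},x_{i+5})\bigr).
\]
Since $7\ge6$, the six letters $x_i,\dots,x_{i+5}$ are distinct coordinates of $(x_1,\dots,x_7)$.

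We sum Lemma~\ref{lem:seven-cycle} over all $q^7$ choices of $(x_1,\dots,x_7)\in[q]^7$. For each fixed $i$, the map $(x_1,\dots,x_7)\mapsto(x_i,\dots,x_{i+3})$ is a coordinate projection onto $[q]^4$ with fibres of size $q^3$. Summing over $i$, the left side therefore totals $7q^3N_q(S)$.

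On the right, the map $(x_1,\dots,x_7)\mapsto(x_i,\dots,x_{i+5})$ has fibres of size $q$. Hence
\[
\sum_{x}y_i(1-y_{i+3})=q\,|S|\,(q^3-|S|),
\]
and summing over $i$ the right side totals $7q^7+7q|S|(q^3-|S|)$.

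Dividing by $7q^7$ gives the first inequality:
\[
\Lambda_q(S)\le \tfrac17+\rho_q(S)\bigl(1-\rho_q(S)\bigr).
\]
Since $\rho(1-\rho)\le\tfrac14$, we obtain $\tfrac14+\tfrac17=\tfrac{11}{28}$, which is the second inequality. Multiplying through by $q^4$ gives $N_q(S)\le\tfrac{11}{28}q^4$.

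The only delicate point is the fibre counting on the right side. It uses that the indices $i,\dots,i+5$ are pairwise distinct modulo $7$, so that the events defined by the windows $(x_i,x_{i+1},x_{i+2})$ and $(x_{i+3},x_{i+4},x_{i+5})$ involve disjoint coordinates. This disjointness is what makes the correlation factor exactly as $|S|(q^3-|S|)$. The left-side count is routine by comparison: it only needs the $4$-letter windows $(x_i,\dots,x_{i+3})$ to consist of distinct coordinates, which is automatic for cyclic length $7$.
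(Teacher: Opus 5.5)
Your proposal is correct and is essentially the paper's proof: apply Lemma~\ref{lem:seven-cycle} pointwise to $y_i=\1_S(x_i,x_{i+1},x_{i+2})$ for $x\in[q]^7$, sum over $[q]^7$, and evaluate both sides by fibre counting, using that the windows $(x_i,x_{i+1},x_{i+2})$ and $(x_{i+3},x_{i+4},x_{i+5})$ occupy disjoint coordinates modulo $7$. One slip to fix: the constant $1$ summed over $[q]^7$ contributes $q^7$, not $7q^7$, to the right-hand side, and it is $q^7$ that produces the term $\frac17$ after you divide by $7q^7$.
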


\begin{proof}
For each \(i\in\ZZ/7\ZZ\), define
\[
C_i:=\bigl\{x=(x_0,\dots,x_6)\in[q]^7:(x_i,x_{i+1},x_{i+2})\in S\bigr\},
\]
with indices read modulo \(7\). For a fixed \(x\in[q]^7\), set
\[
y_i:=\1_{C_i}(x)\in\{0,1\}.
\]
Applying Lemma~\ref{lem:seven-cycle} to the binary \(7\)-tuple \((y_i)_{i\in\ZZ/7\ZZ}\) gives
\[
\sum_{i\in\ZZ/7\ZZ} y_i(1-y_{i+1})
\le
1+\sum_{i\in\ZZ/7\ZZ} y_i(1-y_{i+3}).
\]
Since this inequality holds pointwise for every \(x\in[q]^7\), summing over all \(x\) yields
\begin{equation}\label{eq:upper-master-comb}
\sum_{i\in\ZZ/7\ZZ} |C_i\setminus C_{i+1}|
\le
q^7+\sum_{i\in\ZZ/7\ZZ} |C_i\setminus C_{i+3}|.
\end{equation}

We evaluate the two sides of \eqref{eq:upper-master-comb}. By cyclic symmetry, the cardinalities do not depend on \(i\).

For \(C_i\setminus C_{i+1}\), the condition \(x\in C_i\setminus C_{i+1}\) is equivalent to
\[
(x_i,x_{i+1},x_{i+2})\in S,
\qquad
(x_{i+1},x_{i+2},x_{i+3})\notin S,
\]
while the remaining three coordinates are arbitrary. Therefore
\[
|C_i\setminus C_{i+1}|=q^3N_q(S)=q^7\Lambda_q(S).
\]
Hence the left-hand side of \eqref{eq:upper-master-comb} is \(7q^7\Lambda_q(S)\).

For \(C_i\setminus C_{i+3}\), the condition \(x\in C_i\setminus C_{i+3}\) is equivalent to
\[
(x_i,x_{i+1},x_{i+2})\in S,
\qquad
(x_{i+3},x_{i+4},x_{i+5})\notin S,
\]
with \(x_{i+6}\) arbitrary. Here the two displayed constraints involve disjoint coordinate triples, namely \((x_i,x_{i+1},x_{i+2})\) and \((x_{i+3},x_{i+4},x_{i+5})\), while \(x_{i+6}\) is free. Thus
\[
|C_i\setminus C_{i+3}|=|S|\,(q^3-|S|)\,q
=\rho_q(S)\bigl(1-\rho_q(S)\bigr)q^7.
\]
Hence the right-hand side of \eqref{eq:upper-master-comb} is
\[
q^7+7\rho_q(S)\bigl(1-\rho_q(S)\bigr)q^7.
\]
Substituting into \eqref{eq:upper-master-comb} and dividing by \(7q^7\), we obtain
\[
\Lambda_q(S)\le \rho_q(S)\bigl(1-\rho_q(S)\bigr)+\frac17.
\]
Since \(\rho_q(S)(1-\rho_q(S))\le 1/4\), it follows that
\[
\Lambda_q(S)\le \frac14+\frac17=\frac{11}{28}.
\]
Multiplying by \(q^4\) gives the equivalent bound for \(N_q(S)\).
\end{proof}

\begin{corollary}\label{cor:lambda-upper}
One has
\[
\lambda_3\le \frac{11}{28}\,.
\]
\end{corollary}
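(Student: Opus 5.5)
The plan is to pass from the discrete bound of Proposition~\ref{prop:discrete-upper} to the continuum constant. There are two natural routes, and the cleanest uses Theorem~\ref{thm:limit-exists}. By Proposition~\ref{prop:discrete-upper}, for every $q\ge1$ and every $S\subseteq[q]^3$ we have $N_q(S)\le \tfrac{11}{28}q^4$. Taking the maximum over $S$ gives $M_{4,q}\le \tfrac{11}{28}q^4$ for all $q$. Theorem~\ref{thm:limit-exists} with $k=4$ states that $M_{4,q}/q^4\to\lambda_3$, so passing to the limit gives $\lambda_3\le 11/28$.

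As a consistency check, and as an alternative that avoids the limit theorem, I would also note that the argument of Proposition~\ref{prop:discrete-upper} transfers verbatim to measurable sets. Fix a measurable $A\subseteq[0,1]^3$ and set $y_i(x)=\1_A(x_i,x_{i+1},x_{i+2})$ for $x\in[0,1]^7$, with indices mod $7$. Apply Lemma~\ref{lem:seven-cycle} pointwise and integrate over $[0,1]^7$.

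The left side then becomes $7\Phi(A)$, since each term integrates to $\Phi(A)$ after integrating out the three free coordinates. Each right-hand term involves the disjoint triples $(x_i,x_{i+1},x_{i+2})$ and $(x_{i+3},x_{i+4},x_{i+5})$, so by Fubini it equals $|A|(1-|A|)$. Dividing by $7$ gives $\Phi(A)\le |A|(1-|A|)+\tfrac17\le\tfrac14+\tfrac17=\tfrac{11}{28}$, and taking the supremum over $A$ gives the claim.

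There is no real obstacle. The only point needing care is in the first route: it requires the convergence statement of Theorem~\ref{thm:limit-exists}, namely the direction that $\lambda_3$ is approximated by discrete optima, and this is exactly what that theorem's $\liminf$ part supplies.
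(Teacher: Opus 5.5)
Your proposal is correct, and your second route is exactly the paper's proof: the paper defines $C_i\subseteq[0,1]^7$, applies Lemma~\ref{lem:seven-cycle} pointwise, integrates, and uses Fubini on the disjoint triples to get $7\Phi(A)\le 1+7|A|(1-|A|)$.

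Your first route is genuinely different. It combines the discrete bound $M_{4,q}\le\frac{11}{28}q^4$ from Proposition~\ref{prop:discrete-upper} with the convergence $M_{4,q}/q^4\to\lambda_3$ of Theorem~\ref{thm:limit-exists}. This is valid and not circular, since Theorem~\ref{thm:limit-exists} is proved in Section~\ref{sec:asymptotic} independently of the $k=4$ material. It also has the economy of reusing the discrete computation instead of redoing it.

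The cost is that it imports the approximation half of that theorem: approximating an arbitrary measurable set by $q$-adic sets in symmetric difference, which the paper asserts as standard, together with the $L^1$-continuity of Lemma~\ref{lem:l1-continuity}. The paper's direct continuum argument needs only Fubini. It also yields, for each measurable $A$ and without any limiting procedure, the sharper inequality $\Phi(A)\le|A|(1-|A|)+\frac17$ rather than just a bound on the supremum. That is why the paper presents the corollary as the measure-theoretic analogue rather than as a consequence of the discrete proposition.
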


\begin{proof}
This is the measure-theoretic analogue of Proposition~\ref{prop:discrete-upper}. Let \(A\subseteq[0,1]^3\) be measurable, and write \(|A|:=\Leb^3(A)\). For each \(i\in\ZZ/7\ZZ\), define
\[
C_i:=\bigl\{x=(x_0,\dots,x_6)\in[0,1]^7:(x_i,x_{i+1},x_{i+2})\in A\bigr\},
\]
with indices read modulo \(7\). For a fixed \(x\in[0,1]^7\), set
\[
y_i:=\1_{C_i}(x)\in\{0,1\}.
\]
Applying Lemma~\ref{lem:seven-cycle} to the binary \(7\)-tuple \((y_i)_{i\in\ZZ/7\ZZ}\) gives
\[
\sum_{i\in\ZZ/7\ZZ} y_i(1-y_{i+1})
\le
1+\sum_{i\in\ZZ/7\ZZ} y_i(1-y_{i+3}).
\]
Integrating over \([0,1]^7\) yields
\begin{equation}\label{eq:upper-master-measure}
\sum_{i\in\ZZ/7\ZZ} \Leb^7(C_i\setminus C_{i+1})
\le
1+\sum_{i\in\ZZ/7\ZZ} \Leb^7(C_i\setminus C_{i+3}).
\end{equation}

Again, by cyclic symmetry the measures do not depend on \(i\). For \(C_i\setminus C_{i+1}\),
\[
\Leb^7(C_i\setminus C_{i+1})
=\int_{[0,1]^7} \1_A(x_i,x_{i+1},x_{i+2})\bigl(1-\1_A(x_{i+1},x_{i+2},x_{i+3})\bigr)\,dx.
\]
The integrand depends only on the four coordinates $(x_i,x_{i+1},x_{i+2},x_{i+3})$; after integrating out the remaining three free variables and relabelling
$(u,v,w,z)=(x_i,x_{i+1},x_{i+2},x_{i+3})$, we obtain
\[
\Leb^7(C_i\setminus C_{i+1})
=\int_{[0,1]^4} \1_A(u,v,w)\bigl(1-\1_A(v,w,z)\bigr)\,dudvdwdz
=\Phi(A).
\]
Hence the left-hand side of \eqref{eq:upper-master-measure} is \(7\Phi(A)\).

For \(C_i\setminus C_{i+3}\),
\[
\Leb^7(C_i\setminus C_{i+3})
=\int_{[0,1]^7} \1_A(x_i,x_{i+1},x_{i+2})\bigl(1-\1_A(x_{i+3},x_{i+4},x_{i+5})\bigr)\,dx.
\]
The two factors depend on disjoint triples of coordinates, while \(x_{i+6}\) is free. By Fubini,
\[
\Leb^7(C_i\setminus C_{i+3})
=\left(\int_{[0,1]^3}\1_A\right)\left(\int_{[0,1]^3}(1-\1_A)\right)
=|A|(1-|A|).
\]
Therefore the right-hand side of \eqref{eq:upper-master-measure} is
\[
1+7|A|(1-|A|).
\]
Substituting into \eqref{eq:upper-master-measure}, we find
\[
\Phi(A)\le |A|(1-|A|)+\frac17\le \frac14+\frac17=\frac{11}{28}.
\]
Taking the supremum over measurable \(A\subseteq[0,1]^3\) proves the claim.
\end{proof}

\subsection{Lower bound via a seven-site gadget}\label{sec:lower}
We now construct an explicit dyadic family of sets that yields the lower bound \(\lambda_3\ge 91/240\).

Given \(S\subseteq[q]^3\), its \emph{dyadic lift} \(L_q(S)\subseteq[2q]^3\) is defined by
\[
(r,s,t)\in L_q(S)
\iff
\bigl(\lfloor r/2\rfloor,\lfloor s/2\rfloor,\lfloor t/2\rfloor\bigr)\in S.
\]

\begin{lemma}\label{lem:dyadic-lift}
For every \(q\ge 1\) and every \(S\subseteq[q]^3\),
\[
N_{2q}(L_q(S))=16\,N_q(S).
\]
\end{lemma}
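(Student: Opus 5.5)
The plan is to work through the continuum correspondence of Lemma~\ref{lem:correspondence}, which makes the identity almost immediate, and to back it with a direct counting check. The key observation is that the dyadic lift does not change the associated set in the cube. A cell $Q^{(2q)}_{rst}$ is contained in $Q^{(q)}_{\lfloor r/2\rfloor\lfloor s/2\rfloor\lfloor t/2\rfloor}$, and every $q$-adic cell $Q^{(q)}_{abc}$ is the disjoint union of the eight cells $Q^{(2q)}_{rst}$ with $r\in\{2a,2a+1\}$, $s\in\{2b,2b+1\}$, $t\in\{2c,2c+1\}$. Consequently
\[
A_{L_q(S)}=\bigcup_{(r,s,t)\in L_q(S)}Q^{(2q)}_{rst}=\bigcup_{(a,b,c)\in S}Q^{(q)}_{abc}=A_S
\]
as subsets of $[0,1]^3$, with no discrepancy even on boundaries, because the half-open conventions match.

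Next, I would apply Lemma~\ref{lem:correspondence} twice: once at alphabet size $2q$ and once at alphabet size $q$. This gives
\[
\frac{N_{2q}(L_q(S))}{(2q)^4}=\Phi(A_{L_q(S)})=\Phi(A_S)=\frac{N_q(S)}{q^4}.
\]
Multiplying through by $(2q)^4=16q^4$ yields $N_{2q}(L_q(S))=16N_q(S)$.

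As a sanity check, or as an alternative proof, I would verify the identity directly. A word $(r_1,r_2,r_3,r_4)\in[2q]^4$ belongs to $\mathcal I_{4,2q}(L_q(S))$ if and only if its coordinatewise image $(\lfloor r_i/2\rfloor)_i$ belongs to $\mathcal I_{4,q}(S)$. This holds because membership of both the prefix and the suffix in $L_q(S)$ depends only on the halved coordinates. Each word of $[q]^4$ has exactly $2^4=16$ preimages under halving, so the count multiplies by $16$.

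I do not expect a genuine obstacle here. The only point requiring care is confirming that halving commutes with taking prefixes and suffixes, which it does because it acts coordinatewise.
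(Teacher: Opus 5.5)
Your proof is correct, but your primary argument takes a different route from the paper's. The paper's proof is exactly your \emph{sanity check}: a quadruple in $[2q]^4$ contributes to $N_{2q}(L_q(S))$ if and only if its coordinatewise floor image contributes to $N_q(S)$, and each quadruple in $[q]^4$ has exactly $2^4$ preimages.

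Your main argument instead observes that the lift is invisible in the cube, $A_{L_q(S)}=A_S$. This holds exactly by the half-open convention, although equality up to a null set would already suffice, since $\Phi$ depends only on $\1_A$ almost everywhere. You then apply Lemma~\ref{lem:correspondence} at scales $q$ and $2q$, so the factor $16=(2q)^4/q^4$ appears as a change of normalisation. This makes transparent why the normalised count $\Lambda$ is unchanged by refinement. It is really the same count in disguise: Lemma~\ref{lem:correspondence} at scale $2q$ sums over $(2q)^4$ cells, and grouping them into the $q^4$ blocks of $16$ lying above the coarse cells is precisely the fibre count.

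The paper's direct argument is elementary and self-contained, with no detour through Lebesgue measure for a purely finite identity. Its fibre-counting mechanism is also the one the paper reuses immediately for the degree identities $I_X(u,v)=2I_S(\lfloor u/2\rfloor,\lfloor v/2\rfloor)$ and $O_X(u,v)=2O_S(\lfloor u/2\rfloor,\lfloor v/2\rfloor)$ in the proof of Lemma~\ref{lem:gadget-k4}.
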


\begin{proof}
A quadruple \((\alpha,\beta,\gamma,\delta)\in[q]^4\) contributes to \(N_q(S)\) if and only if
\[
(\alpha,\beta,\gamma)\in S,
\qquad
(\beta,\gamma,\delta)\notin S.
\]
A quadruple \((r,s,t,u)\in[2q]^4\) contributes to \(N_{2q}(L_q(S))\) if and only if its floor image
\[
(\lfloor r/2\rfloor,\lfloor s/2\rfloor,\lfloor t/2\rfloor,\lfloor u/2\rfloor)
\]
contributes to \(N_q(S)\); membership and non-membership are preserved under the floor map by definition of the lift. Each contributing quadruple in \([q]^4\) has exactly \(2^4=16\) lifts to \([2q]^4\). Hence
\[
N_{2q}(L_q(S))=16\,N_q(S).
\]
\end{proof}

The next ingredient is a local seven-site modification that increases the count by exactly one while preserving the local hypothesis needed for iteration.

Let \(a,b\in[q]\) with \(a\neq b\). We say that \(H_q(a,b;S)\) holds if
\begin{align*}
&\text{(i) }(a,a,a)\in S,\quad (b,b,a)\in S,\quad (b,a,a)\notin S,\quad (b,b,b)\notin S,\\
&\text{(ii) }I_S(a,a)+O_S(a,a)=q,\\
&\text{(iii) }I_S(b,b)+O_S(b,b)=q,\\
&\text{(iv) }I_S(b,b)+O_S(b,a)=q-1,\\
&\text{(v) }I_S(b,a)+O_S(a,a)=q+1.
\end{align*}

Assume now that \(H_q(a,b;S)\) holds, and set \(X:=L_q(S)\subseteq[2q]^3\). We define seven distinguished sites in \([2q]^3\):
\begin{align*}
\text{Additions:}&\quad P_1=(2b,2b,2b+1),\quad P_2=(2b+1,2b,2b+1),\quad P_3=(2b+1,2a+1,2a),\\
\text{Removals:}&\quad M_1=(2a+1,2a,2a),\quad M_2=(2a+1,2a,2a+1),\\
&\quad M_3=(2b,2b+1,2a),\quad M_4=(2b,2b+1,2a+1).
\end{align*}
All seven sites are pairwise distinct because \(a\neq b\). Moreover, the membership part of \(H_q(a,b;S)\) implies that
\begin{itemize}
\item[--] the microcell above \((b,b,b)\) is absent from \(X\), so \(P_1,P_2\notin X\);
\item[--]  the microcell above \((b,a,a)\) is absent from \(X\), so \(P_3\notin X\);
\item[--]  the microcell above \((a,a,a)\) is present in \(X\), so \(M_1,M_2\in X\);
\item[--]  the microcell above \((b,b,a)\) is present in \(X\), so \(M_3,M_4\in X\);
\end{itemize}
hence the operation is 
\[
T := \bigl(X\cup\{P_1,P_2,P_3\}\bigr)\setminus\{M_1,M_2,M_3,M_4\}.
\]

\begin{lemma}\label{lem:gadget-k4}
Under the assumptions above, we have
\[
N_{2q}(T)=16\,N_q(S)+1.
\]
\end{lemma}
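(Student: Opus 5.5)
The plan is to use the local formula \eqref{eq:Nsum-general} with $k=4$:
\[
N_{2q}(Y)=\sum_{\mathbf u\in[2q]^2} I_Y(\mathbf u)\bigl(2q-O_Y(\mathbf u)\bigr).
\]
We compare $Y=T$ with $Y=X=L_q(S)$, whose value is $16N_q(S)$ by Lemma~\ref{lem:dyadic-lift}. Only the pairs $\mathbf u$ whose fibres contain a modified site change. A site $(r,s,t)$ affects $I(s,t)$ and $O(r,s)$. So the affected middle pairs are the sets $\{(s,t)\}\cup\{(r,s)\}$ over the seven sites:
\begin{itemize}
\item[--] $P_1$ gives $(2b,2b+1)$ and $(2b,2b)$;
\item[--] $P_2$ gives $(2b,2b+1)$ and $(2b+1,2b)$;
\item[--] $P_3$ gives $(2a+1,2a)$ and $(2b+1,2a+1)$;
\item[--] $M_1$ gives $(2a,2a)$ and $(2a+1,2a)$;
\item[--] $M_2$ gives $(2a,2a+1)$ and $(2a+1,2a)$;
\item[--] $M_3$ gives $(2b+1,2a)$ and $(2b,2b+1)$;
\item[--] $M_4$ gives $(2b+1,2a+1)$ and $(2b,2b+1)$.
\end{itemize}
This is a finite list of about eight pairs. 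For each of them I will compute $\Delta I$ and $\Delta O$ explicitly.

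Next I express the unperturbed values in terms of $S$. For $X$, a pair $(s,t)$ lying over $(\bar s,\bar t)$ has $I_X(s,t)=2I_S(\bar s,\bar t)$ and $O_X(s,t)=2O_S(\bar s,\bar t)$. The contribution at $\mathbf u$ then changes from $I(2q-O)$ to
\[
(I+\Delta I)(2q-O-\Delta O)=I(2q-O)+\Delta I\,(2q-O)-I\,\Delta O-\Delta I\,\Delta O .
\]
For each affected pair I will record $\Delta I$ and $\Delta O$ (each is $0$, $\pm1$ or $\pm2$). For example, at $(2b,2b+1)$ we have $\Delta I=+2$ from $P_1,P_2$. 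Its outgoing sites are $(2b,2b+1,2a)=M_3$ and $(2b,2b+1,2a+1)=M_4$, so $\Delta O=-2$. The unperturbed values there are $I=2I_S(b,b)$ and $O=2O_S(b,b)$.

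Summing the increments, every term of the form $2q-2O_S$ or $2I_S$ should group into the combinations appearing in hypotheses (ii)--(v). Concretely, I expect the following:
\begin{itemize}
\item[--] the pair $(2b,2b+1)$ yields $4\bigl(q-O_S(b,b)\bigr)+4I_S(b,b)+4$, to be paired with $(2b,2b)$ and $(2b+1,2b)$ and simplified using (iii);
\item[--] the pairs $(2a+1,2a)$ and $(2a,2a)$ bring $I_S(b,a)$ and $O_S(a,a)$, handled by (v);
\item[--] the pairs $(2b+1,2a)$ and $(2b+1,2a+1)$ bring $I_S(b,b)$ and $O_S(b,a)$, handled by (iv);
\item[--] the pair $(2a,2a+1)$ brings the combination in (ii).
\end{itemize}
The membership conditions (i) are used to decide which neighbouring microcells are full or empty. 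For instance, $(2b,2b,2b+1)$ has prefix-letter fibre above $(b,b,b)\notin S$, and this fixes the base values. After substitution all $q$-dependence should cancel, leaving a net $+1$.

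The main obstacle is purely bookkeeping. Several sites share the same middle pair, for instance $(2b,2b+1)$ receives four modifications and $(2a+1,2a)$ receives three, so the cross terms $\Delta I\,\Delta O$ must be taken with both changes simultaneous rather than added separately. I will organise the computation as a table: one row per affected pair, with columns for the base $I_X$ and $O_X$ (via floors), $\Delta I$, $\Delta O$, and the exact increment. I will then sum the rows and apply (ii)--(v) linearly. If the total does not come out as $+1$, I will first recheck which outgoing sites lie above cells in $S$, since the lifted fibres $2I_S$ versus $2I_S\pm1$ are where sign errors are most likely.
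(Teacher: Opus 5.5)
Your plan is correct and is the paper's proof: the same eight affected pairs, the same expansion including the cross term $\Delta I\,\Delta O$, summing to $\Delta N=-2I_S(b,a)+4I_S(a,a)+2O_S(a,a)+4O_S(b,a)-4O_S(b,b)-2q+7$, which (ii)--(v) reduce to $1$. Some details of your forecast grouping are off, though this is harmless because you finish with a linear substitution: the pairs $(2b+1,2a)$ and $(2b+1,2a+1)$ lie over $(b,a)$ and contribute $I_S(b,a)$ and $O_S(b,a)$, while $(2a+1,2a)$ and $(2a,2a)$ lie over $(a,a)$; the $O_S(b,b)$ terms cancel only once (iii) is combined with (iv); and (i) is needed to make the $P_i$ genuine additions and the $M_j$ genuine removals, not to fix the base values $2I_S,2O_S$, which come from the lift alone.
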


\begin{proof}
By Lemma~\ref{lem:dyadic-lift},
\[
N_{2q}(X)=16N_q(S).
\]
We compute the increment
\[
\Delta N:=N_{2q}(T)-N_{2q}(X)
\]
using the degree formula \eqref{eq:Nsum-general}. For \((u,v)\in[2q]^2\), set
\[
\Delta I(u,v):=I_T(u,v)-I_X(u,v),
\qquad
\Delta O(u,v):=O_T(u,v)-O_X(u,v).
\]
Expanding
\[
(I_X+\Delta I)\bigl(2q-(O_X+\Delta O)\bigr)-I_X(2q-O_X)
\]
inside the sum \eqref{eq:Nsum-general} gives
\[
\Delta N = \sum_{u,v}\Bigl[\Delta I(u,v)\bigl(2q-O_X(u,v)\bigr)-I_X(u,v)\Delta O(u,v)-\Delta I(u,v)\Delta O(u,v)\Bigr].
\]

Only finitely many ordered pairs \((u,v)\) are affected. For each such pair we also record its coarse image \((\lfloor u/2\rfloor,\lfloor v/2\rfloor)\). Since \(X=L_q(S)\), we have
\[
I_X(u,v)=2I_S(\lfloor u/2\rfloor,\lfloor v/2\rfloor),
\qquad
O_X(u,v)=2O_S(\lfloor u/2\rfloor,\lfloor v/2\rfloor).
\]
We first localise the effect of the seven toggled triples on the ordered pairs $(u,v)$, and then rewrite the increment $\Delta N$ through a small collection of scalar balance quantities. A direct inspection yields the following variation table:
\[
\begin{array}{c|c|c|c}
(u,v) & (\lfloor u/2\rfloor,\lfloor v/2\rfloor) & \Delta O & \Delta I \\ \hline
(2b,2b)      & (b,b) & +1 & 0 \\
(2b+1,2b)    & (b,b) & +1 & 0 \\
(2b+1,2a+1)  & (b,a) & +1 & -1 \\
(2a+1,2a)    & (a,a) & -2 & +1 \\
(2a,2a)      & (a,a) & 0  & -1 \\
(2a,2a+1)    & (a,a) & 0  & -1 \\
(2b,2b+1)    & (b,b) & -2 & +2 \\
(2b+1,2a)    & (b,a) & 0  & -1
\end{array}
\]
No other ordered pair is affected, hence every other pair contributes \(0\).

For brevity write
\[
\begin{aligned}
I_{aa}&:=I_S(a,a), & O_{aa}&:=O_S(a,a), & I_{bb}&:=I_S(b,b),\\
O_{bb}&:=O_S(b,b), & I_{ba}&:=I_S(b,a), & O_{ba}&:=O_S(b,a).
\end{aligned}
\]
Using the table, the eight non-zero contributions to \(\Delta N\) are:
\begin{align*}
(2b,2b):&\quad -2I_{bb},\\
(2b+1,2b):&\quad -2I_{bb},\\
(2b+1,2a+1):&\quad -2q+2O_{ba}-2I_{ba}+1,\\
(2a+1,2a):&\quad 2q-2O_{aa}+4I_{aa}+2,\\
(2a,2a):&\quad -2q+2O_{aa},\\
(2a,2a+1):&\quad -2q+2O_{aa},\\
(2b,2b+1):&\quad 4q-4O_{bb}+4I_{bb}+4,\\
(2b+1,2a):&\quad -2q+2O_{ba}.
\end{align*}
Summing them gives
\[
\Delta N=-2I_{ba}+4I_{aa}+2O_{aa}+4O_{ba}-4O_{bb}-2q+7.
\]

Now invoke the four degree relations contained in \(H_q(a,b;S)\):
\[
I_{aa}+O_{aa}=q,
\qquad
I_{bb}+O_{bb}=q,
\qquad
I_{bb}+O_{ba}=q-1,
\qquad
I_{ba}+O_{aa}=q+1.
\]
From these we obtain
\[
O_{aa}=q+1-I_{ba},
\qquad
I_{aa}=I_{ba}-1,
\qquad
O_{ba}=q-1-I_{bb},
\qquad
O_{bb}=q-I_{bb}.
\]
Substituting into the formula for \(\Delta N\),
\begin{align*}
\Delta N
&= -2I_{ba}+4(I_{ba}-1)+2(q+1-I_{ba})+4(q-1-I_{bb})-4(q-I_{bb})-2q+7\\
&=1.
\end{align*}
Therefore
\[
N_{2q}(T)=N_{2q}(X)+1=16N_q(S)+1,
\]
as claimed.
\end{proof}

\subsubsection{Propagation of the local hypothesis}

For \(u=(u_1,u_2,u_3)\in[q]^3\), write
\[
B(u):=\{(2u_1+\varepsilon_1,2u_2+\varepsilon_2,2u_3+\varepsilon_3): \varepsilon_1,\varepsilon_2,\varepsilon_3\in\{0,1\}\}\subseteq[2q]^3.
\]
We call \(B(u)\) the \emph{microcell above \(u\)}.

\begin{proposition}\label{prop:propagation-k4}
Let \(S\subseteq[q]^3\), and let \(a,b\in[q]\) with \(a\neq b\). Assume that \(H_q(a,b;S)\) holds. Let
\[
X:=L_q(S)\subseteq[2q]^3,
\]
and let \(T\subseteq[2q]^3\) be obtained from \(X\) by the seven-site gadget above. Define
\[
a':=2a+1,
\qquad
b':=2b+1.
\]
Then
\[
H_{2q}(a',b';T)
\]
holds.
\end{proposition}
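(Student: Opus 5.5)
The plan is to verify the five clauses of \(H_{2q}(a',b';T)\) one by one. Membership questions will be read off from the microcell structure of \(X=L_q(S)\) together with the list of the seven toggled sites. The degree relations will be read off from the variation table in the proof of Lemma~\ref{lem:gadget-k4}, combined with the identities \(I_X(u,v)=2I_S(\lfloor u/2\rfloor,\lfloor v/2\rfloor)\) and \(O_X(u,v)=2O_S(\lfloor u/2\rfloor,\lfloor v/2\rfloor)\). Note first that \(a'\neq b'\) because \(a\neq b\).

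\emph{Step 1 (membership, clause (i)).} I will check each of the four triples below.
\begin{itemize}
\item The triple \((a',a',a')=(2a+1,2a+1,2a+1)\) lies in \(B(a,a,a)\subseteq X\). It is neither \(M_1\) nor \(M_2\), since their second coordinate is \(2a\). Hence \((a',a',a')\in T\).
\item The triple \((b',b',a')=(2b+1,2b+1,2a+1)\) lies in \(B(b,b,a)\subseteq X\). It is neither \(M_3\) nor \(M_4\), since their first coordinate is \(2b\). Hence \((b',b',a')\in T\).
\item The triple \((b',a',a')=(2b+1,2a+1,2a+1)\) lies in \(B(b,a,a)\), which is disjoint from \(X\). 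The only addition in that microcell is \(P_3=(2b+1,2a+1,2a)\), which differs in the last coordinate. Hence \((b',a',a')\notin T\).
\item The triple \((b',b',b')\) lies in \(B(b,b,b)\), which is disjoint from \(X\). It is neither \(P_1\) nor \(P_2\), since their second coordinate is \(2b\). Hence \((b',b',b')\notin T\).
\end{itemize}

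\emph{Step 2 (degree relations, clauses (ii)--(v)).} I will use \(I_T=I_X+\Delta I\) and \(O_T=O_X+\Delta O\).
\begin{itemize}
\item The pairs \((a',a')\) and \((b',b')\) do not appear in the variation table. Directly, no toggled site has last two coordinates, or first two coordinates, equal to \((2a+1,2a+1)\) or to \((2b+1,2b+1)\). Therefore \(I_T(a',a')+O_T(a',a')=2(I_{aa}+O_{aa})=2q\), and similarly for \((b',b')\). This gives clauses (ii) and (iii).
\item The pair \((b',a')=(2b+1,2a+1)\) appears in the table with \(\Delta O=+1\) and \(\Delta I=-1\). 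Clause (iv) then follows from \(I_T(b',b')+O_T(b',a')=2I_{bb}+2O_{ba}+1=2(q-1)+1=2q-1\).
\item Clause (v) follows from \(I_T(b',a')+O_T(a',a')=2I_{ba}-1+2O_{aa}=2(q+1)-1=2q+1\).
\end{itemize}

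\emph{Main obstacle.} The only delicate point is confirming the relevant table entries. Specifically, I must check that \((2a+1,2a+1)\) and \((2b+1,2b+1)\) are genuinely untouched, and recheck by hand the \(\pm1\) entries at \((2b+1,2a+1)\), which come only from \(P_3\). I will do this by listing, for each of the seven sites, its prefix pair and its suffix pair, and comparing these against the three pairs \((a',a')\), \((b',b')\) and \((b',a')\). The rest of the argument is arithmetic using \(H_q(a,b;S)\).
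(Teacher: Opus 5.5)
Your proposal is correct and follows the paper's proof: microcell membership plus a comparison with the toggled sites in the same microcell for clause (i), and the lift identities $I_X=2I_S$, $O_X=2O_S$ adjusted only at the pair $(b',a')$ for clauses (ii)--(v). One attribution in your closing remark is wrong, though the numbers you use are right: the entry $\Delta O=+1$ at $(2b+1,2a+1)$ comes from the addition $P_3$ (via its prefix pair), but the entry $\Delta I=-1$ comes from the removal $M_4=(2b,2b+1,2a+1)$ (via its suffix pair), not from $P_3$, whose suffix pair is $(2a+1,2a)$ — the paper identifies $M_4$ explicitly here.
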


\begin{proof}
All seven toggled sites lie in the four microcells above \((a,a,a)\), \((b,b,a)\), \((b,a,a)\), and \((b,b,b)\). We verify the four membership conditions and the four degree-balance conditions for the new pair \((a',b')=(2a+1,2b+1)\).

\medskip
\noindent\textbf{1. Membership conditions.}

Because \((a,a,a)\in S\), the whole microcell above \((a,a,a)\) lies in \(X\), hence
\[
(a',a',a')=(2a+1,2a+1,2a+1)\in X.
\]
This site is not toggled, so \((a',a',a')\in T\).

Because \((b,b,a)\in S\), the whole microcell above \((b,b,a)\) lies in \(X\), hence
\[
(b',b',a')=(2b+1,2b+1,2a+1)\in X\subseteq T,
\]
again because this site is not toggled.

Because \((b,a,a)\notin S\), the whole microcell above \((b,a,a)\) is absent from \(X\), so
\[
(b',a',a')=(2b+1,2a+1,2a+1)\notin X.
\]
Among the three additions, only \(P_3=(2b+1,2a+1,2a)\) lies in that microcell, so \((b',a',a')\) is still absent from \(T\).

Because \((b,b,b)\notin S\), the whole microcell above \((b,b,b)\) is absent from \(X\), in particular
\[
(b',b',b')=(2b+1,2b+1,2b+1)\notin X.
\]
The gadget adds only \(P_1\) and \(P_2\) in that microcell, so \((b',b',b')\notin T\).

Thus the membership part of \(H_{2q}(a',b';T)\) holds.

\medskip
\noindent\textbf{2. The identity \(I_T(a',a')+O_T(a',a')=2q\).}

No toggled site has initial pair \((a',a')\), and no toggled site has terminal pair \((a',a')\). Therefore
\[
I_T(a',a')=I_X(a',a')=2I_S(a,a),
\qquad
O_T(a',a')=O_X(a',a')=2O_S(a,a).
\]
Using \(I_S(a,a)+O_S(a,a)=q\), we obtain
\[
I_T(a',a')+O_T(a',a')=2q.
\]

\medskip
\noindent\textbf{3. The identity \(I_T(b',b')+O_T(b',b')=2q\).}

Exactly the same argument gives
\[
I_T(b',b')=2I_S(b,b),
\qquad
O_T(b',b')=2O_S(b,b),
\]
so from \(I_S(b,b)+O_S(b,b)=q\) we get
\[
I_T(b',b')+O_T(b',b')=2q.
\]

\medskip
\noindent\textbf{4. The identity \(I_T(b',b')+O_T(b',a')=2q-1\).}

We already know that \(I_T(b',b')=2I_S(b,b)\). Moreover, \(O_T(b',a')\) counts triples of \(T\) whose first two coordinates are \((b',a')=(2b+1,2a+1)\). In the lift \(X\), this count is \(2O_S(b,a)\). The gadget adds exactly one such triple, namely
\[
P_3=(2b+1,2a+1,2a),
\]
and removes none with the same initial pair. Hence
\[
O_T(b',a')=2O_S(b,a)+1.
\]
Using \(I_S(b,b)+O_S(b,a)=q-1\), we conclude that
\[
I_T(b',b')+O_T(b',a')
=2I_S(b,b)+2O_S(b,a)+1
=2q-1.
\]

\medskip
\noindent\textbf{5. The identity \(I_T(b',a')+O_T(a',a')=2q+1\).}

We already know that \(O_T(a',a')=2O_S(a,a)\). On the other hand, \(I_T(b',a')\) counts triples of \(T\) whose last two coordinates are \((b',a')=(2b+1,2a+1)\). In the lift \(X\), this count is \(2I_S(b,a)\). The gadget removes exactly one such triple, namely
\[
M_4=(2b,2b+1,2a+1),
\]
and adds none with the same terminal pair. Therefore
\[
I_T(b',a')=2I_S(b,a)-1.
\]
Using \(I_S(b,a)+O_S(a,a)=q+1\), we obtain
\[
I_T(b',a')+O_T(a',a')
=2I_S(b,a)-1+2O_S(a,a)
=2q+1.
\]

All four membership conditions and all four degree-balance conditions hold, so \(H_{2q}(a',b';T)\) follows.
\end{proof}

\subsubsection{The explicit base configuration at scale \texorpdfstring{$q=16$}{q=16}}\label{subsec:base-q16}
The lower-bound construction requires one explicit seed configuration $S_{16}\subseteq[16]^3$ satisfying a local compatibility hypothesis and having a sufficiently large value of $N_{16}$. The full specification of $S_{16}$ is finite but bulky: it is described by ten fibres and a $16\times 16$ fibre table. Since these data play a purely foundational role in the induction, we place the complete definition in Appendix~\ref{app:q16-data} and keep only the summary needed for the argument here.

More precisely, Appendix~\ref{app:q16-data} defines ten fibres
\[
\cA,\cB,\cC,\cD,\cE,\cF,\cG,\cHh,\cI,\cJ\subseteq[16]
\]
and a fibre table $(T_{a,b})_{(a,b)\in[16]^2}$ with values among these ten sets. The seed configuration is then
\[
(a,b,c)\in S_{16}
\quad\Longleftrightarrow\quad
c\in T_{a,b}.
\]
The choice $q=16=2^4$ is the natural base scale for the construction: the propagation step is dyadic, so once a single admissible seed is available at one power of two, it propagates canonically to all larger dyadic scales. We have not attempted to optimise the smallest possible seed size; rather, $q=16$ is the first scale at which we found a convenient explicit configuration with enough room to enforce the local hypothesis and achieve the target density.

The next lemma records exactly the two facts from this explicit dataset that are needed later: the base count and the local hypothesis required by the dyadic propagation. Their complete finite verification is given in Appendix~\ref{app:q16}.

\begin{lemma}\label{lem:base-k4-q16}
Let $S_{16}\subseteq [16]^3$ be the explicit set defined in Appendix~\ref{app:q16-data}. Then:
\begin{enumerate}
\item
\[
N_{16}(S_{16})=24849.
\]
\item The local hypothesis $H_{16}(8,14;S_{16})$ holds. Equivalently,
\begin{align*}
&(8,8,8)\in S_{16},\qquad (14,14,8)\in S_{16},\qquad (14,8,8)\notin S_{16},\qquad (14,14,14)\notin S_{16},\\
&I_{S_{16}}(8,8)+O_{S_{16}}(8,8)=16,\\
&I_{S_{16}}(14,14)+O_{S_{16}}(14,14)=16,\\
&I_{S_{16}}(14,14)+O_{S_{16}}(14,8)=15,\\
&I_{S_{16}}(14,8)+O_{S_{16}}(8,8)=17.
\end{align*}
\end{enumerate}
\end{lemma}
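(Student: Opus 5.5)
This lemma is a finite verification about an explicit set given by the fibre table in Appendix~\ref{app:q16-data}, so the plan is direct computation organised to be checkable by hand or by a short script. There is no structural argument to invoke. The only tools are the definitions of $N_q$, $I_S$, $O_S$ and the degree formula \eqref{eq:Nsum-general}.

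For part (1), I would not enumerate all $16^4$ quadruples. Instead I would use \eqref{eq:Nsum-general} with $k=4$:
\[
N_{16}(S_{16})=\sum_{(u,v)\in[16]^2} I_{S_{16}}(u,v)\bigl(16-O_{S_{16}}(u,v)\bigr).
\]
Since $(a,b,c)\in S_{16}$ iff $c\in T_{a,b}$, we have $O_{S_{16}}(u,v)=|T_{u,v}|$. This is read off from the sizes of the ten fibres $\cA,\dots,\cJ$, so it is a $16\times16$ table of integers. The in-degree is $I_{S_{16}}(u,v)=\#\{a\in[16]: v\in T_{a,u}\}$, obtained by scanning column $u$ of the table $(T_{a,u})_a$ and counting the fibres containing $v$. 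I would first tabulate, for each fibre, its indicator vector in $[16]$. Then, for each $u$, I would sum the indicator vectors of $T_{0,u},\dots,T_{15,u}$ to get the whole vector $(I(u,v))_v$ at once. Finally I would form the $256$ products and sum them, checking the total equals $24849$. As a consistency check, $\sum_{u,v}I(u,v)=\sum_{u,v}O(u,v)=|S_{16}|$ should hold.

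For part (2), the membership conditions are four lookups: $8\in T_{8,8}$, $8\in T_{14,14}$, $8\notin T_{14,8}$, and $14\notin T_{14,14}$. The four balance identities need only six numbers: $O(8,8)=|T_{8,8}|$, $O(14,14)=|T_{14,14}|$, $O(14,8)=|T_{14,8}|$, and the in-degrees $I(8,8)$, $I(14,14)$, $I(14,8)$. These are already computed in part (1), namely the entries $v=8,14$ of the column-sums for $u=8,14$. I would then substitute them and check that the sums are $16,16,15,17$.

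The main obstacle is purely bookkeeping: correctly transcribing the ten fibres and the fibre table, and avoiding an index transposition between the first-two-coordinate table and the in-degree count, which sums over the first coordinate with $u$ in the \emph{second} slot. To guard against this, I would carry out the computation with the short verifier script in the appendix. I would also cross-check a few in-degrees by hand, in particular the six needed for $H_{16}(8,14;S_{16})$.
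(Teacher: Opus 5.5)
Your proposal is correct and follows essentially the same route as the paper. The membership conditions are direct lookups ($T_{8,8}=T_{14,14}=\cF$, $T_{14,8}=\cI$), and the six needed degrees $O(8,8)=O(14,14)=8$, $O(14,8)=7$, $I(8,8)=I(14,14)=8$, $I(14,8)=9$ come from fibre sizes and column scans. Finally, $N_{16}(S_{16})=24849$ is obtained by evaluating $\sum_{u,v}I_{16}(u,v)\bigl(16-O_{16}(u,v)\bigr)$ from the full degree matrices, exactly as in Appendix~\ref{app:q16}; your extra check $\sum_{u,v}I_{16}(u,v)=\sum_{u,v}O_{16}(u,v)=|S_{16}|$ is not in the paper but is a sensible guard against the transposition error you flag.
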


\begin{proof}
The full finite verification is recorded in Appendix~\ref{app:q16}. In brief, the four membership assertions are read directly from the fibre table, the four degree identities are obtained by counting the occurrences of the values $8$ and $14$ in the corresponding rows and columns, and the counting identity
\[
N_{16}(S_{16})=
\sum_{u,v=0}^{15} I_{S_{16}}(u,v)\bigl(16-O_{S_{16}}(u,v)\bigr)
\]
is then evaluated from the complete incoming- and outgoing-degree matrices. The resulting row sums add up to $24849$.
\end{proof}

\subsubsection{Inductive construction and the resulting lower bound}\label{sec:lower-construction}

\begin{proposition}\label{prop:dyadic-k4}
Set \(q_m:=2^m\) for \(m\ge 4\). Define
\[
S_{q_4}:=S_{16},
\qquad
a_4:=8,
\qquad
b_4:=14,
\]
and recursively, for \(m\ge 4\), let \(S_{q_{m+1}}\subseteq[q_{m+1}]^3\) be obtained from \(S_{q_m}\) by first taking the dyadic lift and then applying the seven-site gadget with parameters \((a_m,b_m)\). Set
\[
a_{m+1}:=2a_m+1,
\qquad
b_{m+1}:=2b_m+1.
\]
Then for every \(m\ge 4\):
\begin{enumerate}
\item \(H_{q_m}(a_m,b_m;S_{q_m})\) holds;
\item
\[
N_{q_{m+1}}(S_{q_{m+1}})=16\,N_{q_m}(S_{q_m})+1;
\]
\item
\[
N_{q_m}(S_{q_m})=\frac{91}{240}\,q_m^4-\frac1{15}.
\]
\end{enumerate}
\end{proposition}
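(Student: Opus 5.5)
The plan is a simple induction on \(m\ge 4\), with parts (1) and (2) proved together and part (3) obtained afterwards by solving a linear recurrence.

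\emph{Base case.} For \(m=4\), part (1) is exactly Lemma~\ref{lem:base-k4-q16}(2): \(H_{16}(8,14;S_{16})\) holds, with \(a_4=8\neq 14=b_4\).

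\emph{Inductive step.} Assume \(H_{q_m}(a_m,b_m;S_{q_m})\) holds with \(a_m\neq b_m\).
\begin{enumerate}
\item The gadget is well defined at this scale. The seven sites are distinct because \(a_m\neq b_m\). The membership part of \(H\) ensures that the additions \(P_1,P_2,P_3\) are absent from the lift \(L_{q_m}(S_{q_m})\), and that the removals \(M_1,\dots,M_4\) are present in it.
\item Lemma~\ref{lem:gadget-k4} then gives part (2):
\[
N_{q_{m+1}}(S_{q_{m+1}})=16\,N_{q_m}(S_{q_m})+1.
\]
\item Proposition~\ref{prop:propagation-k4} gives \(H_{q_{m+1}}(2a_m+1,\,2b_m+1;S_{q_{m+1}})\), which is part (1) at level \(m+1\).
\item Injectivity of \(x\mapsto 2x+1\) keeps \(a_{m+1}\neq b_{m+1}\).
\item We also need \(a_{m+1},b_{m+1}\in[q_{m+1}]\), which holds since \(2a_m+1\le 2(q_m-1)+1<q_{m+1}\).
\end{enumerate}
This closes the induction for (1) and (2).

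\emph{Part (3).} Let \(f(m):=\frac{91}{240}q_m^4-\frac1{15}\).

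First, the formula holds at the base. Lemma~\ref{lem:base-k4-q16}(1) gives \(N_{16}(S_{16})=24849\). Since \(q_4^4=16^4=65536\), we have \(\frac{91}{240}\cdot 65536=\frac{91\cdot 4096}{15}=\frac{372736}{15}\), so
\[
f(4)=\frac{372736-1}{15}=\frac{372735}{15}=24849.
\]

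Second, \(f\) satisfies the same recurrence as \(N_{q_m}(S_{q_m})\). Using \(q_{m+1}^4=16q_m^4\),
\[
16f(m)+1=\frac{91}{240}q_{m+1}^4-\frac{16}{15}+1=f(m+1).
\]
Here \(-\frac1{15}\) is exactly the fixed point of \(x\mapsto 16x+1\).

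Since \(N_{q_m}(S_{q_m})\) and \(f(m)\) agree at \(m=4\) and obey the same recurrence, induction gives \(N_{q_m}(S_{q_m})=f(m)\) for all \(m\ge 4\).

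The only potential obstacle is the bookkeeping in the inductive step, namely checking that the hypotheses of Lemma~\ref{lem:gadget-k4} and Proposition~\ref{prop:propagation-k4} are met verbatim at each scale. This is immediate from (1). The arithmetic at the base is the only numerical check.
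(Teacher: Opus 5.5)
Your proposal is correct and follows the paper's proof: an induction on $m$ that uses Lemma~\ref{lem:base-k4-q16}, Lemma~\ref{lem:gadget-k4} and Proposition~\ref{prop:propagation-k4} for parts (1)--(2), followed by the recurrence for part (3). The differences are cosmetic. You check that the closed form $\frac{91}{240}q_m^4-\frac1{15}$ matches the initial value and the recurrence, whereas the paper unrolls the recurrence as a geometric sum. You also record that $a_{m+1}\neq b_{m+1}$ and $a_{m+1},b_{m+1}\in[q_{m+1}]$, which the paper leaves implicit.
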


\begin{proof}
The base lemma gives \(H_{16}(8,14;S_{16})\), so the claim in (1) holds for \(m=4\). If \(H_{q_m}(a_m,b_m;S_{q_m})\) holds, then Proposition~\ref{prop:propagation-k4} gives
\[
H_{q_{m+1}}(a_{m+1},b_{m+1};S_{q_{m+1}}),
\]
which proves (1) for all \(m\ge 4\) by induction.

Under the same inductive hypothesis, Lemma~\ref{lem:gadget-k4} yields
\[
N_{q_{m+1}}(S_{q_{m+1}})=16\,N_{q_m}(S_{q_m})+1,
\]
which is (2).

To solve the recurrence, use the initial value \(N_{16}(S_{16})=24849\). Iterating (2) gives
\[
N_{q_m}(S_{q_m})
=16^{m-4}\cdot 24849+\sum_{j=0}^{m-5}16^j
=16^{m-4}\cdot 24849+\frac{16^{m-4}-1}{15}.
\]
Since \(q_m=2^m\), so that \(16^{m-4}=q_m^4/16^4=q_m^4/65536\), and since
\[
\frac{24849}{65536}+\frac{1}{15\cdot 65536}=\frac{91}{240},
\]
we obtain
\[
N_{q_m}(S_{q_m})=\frac{91}{240}\,q_m^4-\frac1{15}.
\]
This proves (3).
\end{proof}

\begin{corollary}\label{cor:k4-combined-bounds}
\[
\frac38<\frac{91}{240}\le \lambda_3\le \frac{11}{28}<\frac25.
\]
\end{corollary}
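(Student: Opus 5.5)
The four inequalities in the chain are of two kinds: two are elementary numerical comparisons, and two are the substantive bounds on $\lambda_3$ already established above. I would therefore assemble the corollary rather than prove anything new.

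\textbf{Upper bound.} The inequality $\lambda_3\le 11/28$ is exactly Corollary~\ref{cor:lambda-upper}, obtained from the seven-cycle inequality of Lemma~\ref{lem:seven-cycle}. The comparison $11/28<2/5$ is the arithmetic statement $55<56$.

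\textbf{Lower bound.} Here I would use the dyadic family of Proposition~\ref{prop:dyadic-k4} together with Lemma~\ref{lem:correspondence}. For each $m\ge 4$, let $A_m:=A_{S_{q_m}}\subseteq[0,1]^3$ be the union of the $q_m$-adic cubes indexed by $S_{q_m}$. Lemma~\ref{lem:correspondence} with $k=4$ and Proposition~\ref{prop:dyadic-k4}(3) give
\[
\Phi_3(A_m)=\frac{N_{q_m}(S_{q_m})}{q_m^4}=\frac{91}{240}-\frac{1}{15\,q_m^4}.
\]
Each $A_m$ is a measurable competitor in \eqref{eqlam}, so $\lambda_3\ge \frac{91}{240}-\frac{1}{15\cdot 16^m}$ for every $m\ge4$. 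Letting $m\to\infty$ yields $\lambda_3\ge 91/240$. Equivalently, this is the limsup half of Theorem~\ref{thm:limit-exists}, since $M_{4,q}/q^4\le\lambda_3$ for every $q$. Finally, $3/8=90/240<91/240$.

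\textbf{Main obstacle.} The only nontrivial input in this corollary is Proposition~\ref{prop:dyadic-k4}, which rests on the finite verification of the seed in Lemma~\ref{lem:base-k4-q16}. That step can be taken as given, since it is recorded in the appendix. The one point to handle carefully is that the lower bound is a supremum that is approached but not attained. The value $91/240$ comes from a limit of an increasing sequence, and the argument only needs non-strict inequality, which is exactly what the statement asserts.
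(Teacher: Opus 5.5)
Your proposal is correct and is essentially the paper's own proof. The lower bound comes from evaluating $\Phi(A_m)=\Lambda_{q_m}(S_{q_m})=\frac{91}{240}-\frac{1}{15q_m^4}$ via Lemma~\ref{lem:correspondence} and Proposition~\ref{prop:dyadic-k4} and then letting $m\to\infty$; the upper bound is quoted from Corollary~\ref{cor:lambda-upper}, and the two outer strict inequalities reduce to the arithmetic checks $90<91$ and $55<56$.
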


\begin{proof}
For each \(m\ge 4\), let \(A_m:=A_{S_{q_m}}\subseteq[0,1]^3\) be the \(q_m\)-adic set associated with \(S_{q_m}\). By Proposition~\ref{prop:dyadic-k4} and the correspondence lemma,
\[
\Phi(A_m)=\Lambda_{q_m}(S_{q_m})=\frac{N_{q_m}(S_{q_m})}{q_m^4}=\frac{91}{240}-\frac{1}{15q_m^4}.
\]
Letting \(m\to\infty\) gives
\[
\lambda_3\ge \frac{91}{240}.
\]
The upper bound \(\lambda_3\le 11/28\) is Corollary~\ref{cor:lambda-upper}. The strict inequalities with \(3/8\) and \(2/5\) are immediate.
\end{proof}

\subsection{The independence number of $B(4,q)$}\label{sec:consequences}
We now combine Proposition~\ref{prop:exact-reduction}, the asymptotic result of Section~\ref{sec:asymptotic}, and the explicit bounds for \(\lambda_3\) established above.

\begin{theorem}\label{thm:main-db4}
As \(q\to\infty\),
\[
\alpha(4,q)=\lambda_3 q^4+o(q^4),
\]
with
\[
\frac{91}{240}\le \lambda_3\le \frac{11}{28}.
\]
Equivalently,
\[
\frac{91}{240}q^4+o(q^4)\le \alpha(4,q)\le \frac{11}{28}q^4+o(q^4).
\]
The same asymptotic estimate also holds for \(\alpha_{\Loop}(4,q)\).
\end{theorem}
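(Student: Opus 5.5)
This theorem is a direct assembly of results already established, so the plan is to combine them rather than prove anything new. First, I would apply Theorem~\ref{thm:limit-exists} with $k=4$. It gives $M_{4,q}/q^4\to\lambda_3$, and through Proposition~\ref{prop:exact-reduction} also $\alpha(4,q)=\lambda_3q^4+o(q^4)$. Since $M_{4,q}=\alpha_{\Loop}(4,q)$ by the same proposition, the looped version follows immediately from that limit statement as well.

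Second, I would insert the numerical bounds on $\lambda_3$ from Corollary~\ref{cor:k4-combined-bounds}. The upper bound $\lambda_3\le 11/28$ comes from the seven-cycle inequality in Corollary~\ref{cor:lambda-upper}. The lower bound $\lambda_3\ge 91/240$ comes from the dyadic family of Proposition~\ref{prop:dyadic-k4} combined with Lemma~\ref{lem:correspondence}. Substituting these into $\alpha(4,q)=\lambda_3q^4+o(q^4)$ yields the displayed two-sided estimate.

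The "equivalently" clause needs a one-line remark. The two-sided inequality with unspecified $o(q^4)$ terms is implied by the asymptotic formula together with the bounds on $\lambda_3$. The converse direction is not literally an equivalence, since the inequality alone does not force a limit to exist. I would therefore read "equivalently" as restating the consequence, with existence of the limit supplied by Theorem~\ref{thm:limit-exists}.

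For a more self-contained argument one can also bound the two sides directly. The upper side gives $\alpha(4,q)\le M_{4,q}+q\le \tfrac{11}{28}q^4+q$ for every $q$, by Proposition~\ref{prop:discrete-upper}. The lower side holds along $q=2^m$ via $N_{q_m}(S_{q_m})=\tfrac{91}{240}q_m^4-\tfrac1{15}$. For general $q$ the lower side would need an extra step, such as embedding or restricting the dyadic set. This is the only place with any friction, and Theorem~\ref{thm:limit-exists} already removes it, so I would rely on that theorem rather than perform a separate interpolation.
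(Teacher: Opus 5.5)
Your proposal is correct and follows the paper's proof: specialise Theorem~\ref{thm:limit-exists} to $k=4$, which covers both $\alpha(4,q)$ and $\alpha_{\Loop}(4,q)=M_{4,q}$, then insert $\lambda_3\le 11/28$ from Corollary~\ref{cor:lambda-upper} and $\lambda_3\ge 91/240$ from Corollary~\ref{cor:k4-combined-bounds}. Your caveat is also a fair reading of the paper's ``Equivalently'': the two-sided inequality follows from the asymptotic formula but is not literally equivalent to it, since on its own it does not force the limit to exist.
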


\begin{proof}
The asymptotic formula is exactly Theorem~\ref{thm:limit-exists} specialised to \(k=4\). The upper bound for \(\lambda_3\) is Corollary~\ref{cor:lambda-upper}, and the lower bound is Corollary~\ref{cor:k4-combined-bounds}. Substituting these bounds into Theorem~\ref{thm:limit-exists} yields the claim.
\end{proof}

Along the dyadic subsequence \(q=2^m\), Proposition~\ref{prop:dyadic-k4} gives the sharper explicit bound
\[
N_{2^m}(S_{2^m})=\frac{91}{240}\,2^{4m}-\frac1{15}.
\]
Therefore
\[
\alpha_{\Loop}(4,2^m)\ge \frac{91}{240}\,2^{4m}-\frac1{15},
\qquad
\alpha(4,2^m)\ge \frac{91}{240}\,2^{4m}-\frac1{15}.
\]
The universal upper bound from Proposition~\ref{prop:discrete-upper} reads
\[
\alpha_{\Loop}(4,q)\le \frac{11}{28}q^4,
\qquad
\alpha(4,q)\le \frac{11}{28}q^4+q.
\]

\begin{remark}[Road map of the $k=4$ lower bound]
The lower bound has four logically distinct layers. One starts from the explicit seed $S_{16}$, verifies finitely that it has the required local structure and that $N_{16}(S_{16})=24849$, propagates this information to all dyadic scales by the lift-plus-gadget construction, and finally passes from dyadic discrete sets to the continuum variational problem through the correspondence established in Section~\ref{sec:asymptotic}. This separation is useful for refereeing: only the first layer is finite bookkeeping, while the later steps are conceptual and remain unchanged once the base seed has been verified.
\end{remark}

\section{Exact formulas for \texorpdfstring{$k=11,13,17$}{k=11,13,17}}\label{sec:odd-prime-binary}
In this section we specialise to the binary graph underlying $B(k,2)$, where $k$ is an odd prime. We write words as
\[
x=x_0x_1\dots x_{k-1}\in\{0,1\}^k,
\]
and let
\[
\rho(x_0x_1\dots x_{k-1}):=x_1x_2\dots x_{k-1}x_0
\]
be the cyclic left rotation. Throughout this section, orbit indices are understood modulo $k$. We also write $0^k$ and $1^k$ for the two constant words. This is the only place in the paper where we switch to $0$-based coordinate indexing, in order to align the notation with the cyclic rotation action.

Two results of Lichiardopol are the external input for what follows. First, Propositions~4.2 and~4.3(b) of \cite{Lich06}, specialised to the odd prime binary case, give
\begin{equation}\label{eq:binary-upper-prime}
\alpha(k,2)\le 1+\frac{k-1}{2k}(2^k-2).
\end{equation}
Second, by \cite[Theorem~4.4]{Lich06}, if the bound \eqref{eq:binary-upper-prime} is attained by an independent set containing exactly one loop-vertex, then for every alphabet size $q\ge2$ one has
\begin{equation}\label{eq:lich-lift-prime}
\alpha(k,q)=\frac{(k-1)(q^k-q)}{2k}+1,
\qquad
\alpha_{\Loop}(k,q)=\frac{(k-1)(q^k-q)}{2k}.
\end{equation}
Thus the binary problem controls the exact formulas for all $q\ge 2$.

\subsection{Rotation orbits and the phase reduction}
For $x\in\{0,1\}^k$, let
\[
C(x):=\{\rho^i(x):i\in\ZZ/k\ZZ\}
\]
be its rotation orbit.

\begin{lemma}\label{lem:orbit-size-prime}
If $x\in\{0,1\}^k$ is non-constant, then $|C(x)|=k$.
\end{lemma}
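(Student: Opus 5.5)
The plan is to use the orbit--stabiliser theorem for the cyclic group $\ZZ/k\ZZ$ acting on $\{0,1\}^k$ by $i\cdot x:=\rho^i(x)$. The orbit $C(x)$ has size equal to the index of the stabiliser
\[
G_x:=\{i\in\ZZ/k\ZZ:\rho^i(x)=x\},
\]
so $|C(x)|=k/|G_x|$ divides $k$. Since $k$ is prime, $|C(x)|\in\{1,k\}$, and it suffices to rule out $|C(x)|=1$ for non-constant $x$.

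If $|C(x)|=1$, then in particular $\rho(x)=x$, that is, $x_{i+1}=x_i$ for every $i\in\ZZ/k\ZZ$. Iterating this equality along the indices $0,1,\dots,k-1$ gives $x_0=x_1=\dots=x_{k-1}$, so $x$ is one of the constant words $0^k$ or $1^k$. This contradicts the hypothesis that $x$ is non-constant, and therefore $|C(x)|=k$.

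An alternative route avoids group language. Let $s\ge1$ be the least positive integer with $\rho^s(x)=x$. Such an $s$ exists and satisfies $s\le k$, because $\rho^k=\mathrm{id}$. Moreover, $s\mid k$ by the division algorithm: the set of periods of $x$ is closed under differences, so it consists exactly of the multiples of $s$. As before, $s=1$ forces $x$ to be constant, so $s=k$. The words $x,\rho x,\dots,\rho^{k-1}x$ are then pairwise distinct, since $\rho^i x=\rho^j x$ with $0\le i<j<k$ would give a period $j-i<k$.

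No step here is a real obstacle. The only point that needs a sentence of care is the divisibility $|C(x)|\mid k$, which orbit--stabiliser (or the minimal-period argument) supplies directly.
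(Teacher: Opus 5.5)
Your proof is correct and follows essentially the paper's route: both use orbit--stabiliser for the rotation action of $\ZZ/k\ZZ$, with primality of $k$ forcing the stabiliser to be either trivial or the whole group. The only difference is cosmetic. You apply Lagrange to the orbit size and so only need to exclude $\rho(x)=x$, whereas the paper notes that any non-zero stabilising shift generates $\ZZ/k\ZZ$ and therefore forces $x$ to be constant.
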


\begin{proof}
If $\rho^r(x)=x$ for some $r\in\ZZ/k\ZZ$, then $x$ has cyclic period $r$. Since $k$ is prime, every non-zero class in $\ZZ/k\ZZ$ generates the whole group, so a non-trivial period would force all coordinates of $x$ to coincide. Hence a non-constant word has no non-trivial stabiliser under rotation, and its orbit has cardinality $k$.
\end{proof}

\begin{lemma}\label{lem:orbit-cycle-prime}
Let $x\in\{0,1\}^k$ be non-constant, and write $x^{(i)}:=\rho^i(x)$ for $i\in\ZZ/k\ZZ$. Then the subgraph induced by $C(x)$ in the simple graph underlying $B(k,2)$ is the cycle graph $C_k$. Equivalently,
\[
x^{(i)}\sim x^{(j)}
\qquad\Longleftrightarrow\qquad
j-i\equiv \pm1 \pmod k.
\]
\end{lemma}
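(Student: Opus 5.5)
The proof has two parts: show that consecutive rotations are adjacent, and show that no other pairs in the orbit are. Throughout, use the adjacency criterion from the introduction. Two words $u,v$ are adjacent in the simple underlying graph exactly when $u\ne v$ and either the length-$(k-1)$ suffix of $u$ equals the length-$(k-1)$ prefix of $v$, or vice versa. In the $0$-based notation of this section, this means $u\to v$ holds iff $v=x_1\dots x_{k-1}t$ for some $t$, where $u=x_0\dots x_{k-1}$.

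\emph{Step 1 (the cycle edges are present).} For each $i$, the word $x^{(i+1)}=\rho(x^{(i)})$ is obtained from $x^{(i)}$ by deleting the first letter and appending that same letter at the end. Hence $x^{(i)}\to x^{(i+1)}$ is a directed edge of $B(k,2)$. By Lemma~\ref{lem:orbit-size-prime} the $k$ words $x^{(i)}$ are pairwise distinct. So this edge is not a loop and survives in the simple graph. Therefore $x^{(i)}\sim x^{(j)}$ whenever $j-i\equiv\pm1$. Since $k\ge3$, the residues $+1$ and $-1$ are distinct, so these edges form a genuine $k$-cycle.

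\emph{Step 2 (no chords).} Suppose $x^{(i)}\sim x^{(j)}$ with $i\ne j$. After relabelling, say $x^{(i)}\to x^{(j)}$, which means that the suffix of $x^{(i)}$ of length $k-1$ equals the prefix of $x^{(j)}$ of length $k-1$. Set $y=x^{(i)}$ and $d=j-i$, so that $x^{(j)}=\rho^d y$. We also know that $\rho y$ has the same length-$(k-1)$ prefix as $\rho^d y$, namely the suffix of $y$.

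Two words of the same orbit sharing a $(k-1)$-prefix have the same number of ones, because the weight is rotation-invariant. They therefore also agree in their last letter, and hence are equal. This gives $\rho y=\rho^d y$, that is, $\rho^{d-1}y=y$. By Lemma~\ref{lem:orbit-size-prime} the stabiliser of $y$ is trivial, so $d\equiv1$. The reverse orientation gives $d\equiv-1$ in the same way.

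The only delicate point is this weight argument. It replaces a direct, messier analysis of periodicity of $y$, and it relies on the fact that the alphabet is binary, or more generally that letter counts are rotation-invariant, which works for any alphabet. Combining Steps 1 and 2, the induced subgraph on $C(x)$ is exactly the cycle $C_k$.
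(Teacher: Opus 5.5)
Your proof is correct and follows the same route as the paper. Both arguments reduce an overlap between $x^{(i)}$ and $x^{(j)}$ to $x$ having cyclic period $j-i-1$ or $j-i+1$, and then conclude from the trivial stabiliser given by Lemma~\ref{lem:orbit-size-prime}. Your weight argument makes explicit a step the paper asserts without comment: the $(k-1)$-letter overlap yields only $k-1$ of the $k$ coordinate equalities, yet it already forces the full cyclic period. You also check explicitly that the consecutive edges are present and are not loops.
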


\begin{proof}
Because adjacency is invariant under simultaneous rotation of both words, it is enough to determine when $x^{(0)}$ is adjacent to $x^{(r)}$. If the suffix of length $k-1$ of $x^{(0)}$ equals the prefix of length $k-1$ of $x^{(r)}$, then $x$ has cyclic period $r-1$; if the prefix of length $k-1$ of $x^{(0)}$ equals the suffix of length $k-1$ of $x^{(r)}$, then $x$ has cyclic period $r+1$. Since $x$ is non-constant and $k$ is prime, Lemma~\ref{lem:orbit-size-prime} implies that only the trivial period can occur. Thus $r\equiv1$ or $r\equiv-1\pmod k$, and the claim follows.
\end{proof}

Let
\[
J_k:=\{1,3,5,\dots,k-2\}\subseteq \ZZ/k\ZZ.
\]
For a non-trivial rotation orbit $C=(x^{(i)})_{i\in\ZZ/k\ZZ}$ and a phase $t\in\ZZ/k\ZZ$, define
\[
A_t(C):=\{x^{(t+r)}:r\in J_k\}.
\]

\begin{lemma}\label{lem:alternating-phase-set}
For every non-trivial rotation orbit $C$ and every $t\in\ZZ/k\ZZ$, the set $A_t(C)$ is independent and has cardinality $(k-1)/2$.
\end{lemma}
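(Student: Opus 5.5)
The plan is to derive both assertions directly from Lemma~\ref{lem:orbit-size-prime} and Lemma~\ref{lem:orbit-cycle-prime}. Both reduce to elementary facts about the index set $t+J_k\subseteq\ZZ/k\ZZ$.

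\emph{Cardinality.} The set $J_k=\{1,3,\dots,k-2\}$ consists of the odd integers between $1$ and $k-2$, so it has exactly $(k-1)/2$ elements. These are distinct residues modulo $k$ because they lie in $[1,k-2]$. By Lemma~\ref{lem:orbit-size-prime}, the orbit $C$ has exactly $k$ elements. Hence $i\mapsto x^{(i)}$ is a bijection from $\ZZ/k\ZZ$ onto $C$. Its restriction to the translate $t+J_k$ is therefore injective, which gives $|A_t(C)|=(k-1)/2$.

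\emph{Independence.} By Lemma~\ref{lem:orbit-cycle-prime}, two elements $x^{(t+r)}$ and $x^{(t+r')}$ of $C$ are adjacent if and only if $r'-r\equiv\pm1\pmod k$. So it suffices to show that no two elements $r,r'\in J_k$ satisfy $r'-r\equiv\pm1\pmod k$.

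For $r,r'\in J_k$, the integer difference $r'-r$ is even and satisfies $|r'-r|\le k-3$. A congruence $r'-r\equiv\pm1\pmod k$ would force $r'-r\in\{\pm1,\pm(k-1),\pm(k+1)\}$ within that range. Since $k\ge3$, the value $\pm1$ is odd and hence excluded. The values $\pm(k-1)$ and $\pm(k+1)$ have absolute value greater than $k-3$ and are also excluded.

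This step needs a little care, because $k-1$ is even. One must genuinely use the bound $|r'-r|\le k-3$; this is exactly why $k-1$ is omitted from $J_k$, so that $1$ and $k-1$ are not both present. No other step is an obstacle.

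Adjacency with looped vertices is irrelevant here, since $A_t(C)$ contains only non-constant words, and these carry no loops. Hence $A_t(C)$ is independent in both the simple and the looped models.
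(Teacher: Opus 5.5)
Your proof is correct and follows the same route as the paper. By Lemma~\ref{lem:orbit-cycle-prime}, edges inside $C$ join only residues differing by $\pm1$. The set $t+J_k$ contains no such pair. The cardinality is $|J_k|$, and you also make explicit the injectivity of $i\mapsto x^{(i)}$ via Lemma~\ref{lem:orbit-size-prime}, which the paper leaves implicit.

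One harmless slip in your aside: $1$ and $k-1$ differ by $k-2\equiv-2$, so they are not adjacent, and $k-1\notin J_k$ simply by parity. The wrap-around pair is $\{0,k-1\}$, and your bound $|r'-r|\le k-3$ already excludes it.
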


\begin{proof}
By Lemma~\ref{lem:orbit-cycle-prime}, the only edges inside $C$ join consecutive residues modulo $k$. The translate $J_k+t$ contains no two consecutive residues, hence $A_t(C)$ is independent. Its cardinality is $|J_k|=(k-1)/2$.
\end{proof}

\begin{proposition}\label{prop:phase-reduction-prime}
Let $S\subseteq\{0,1\}^k$ be an independent set such that
\[
|S|=1+\frac{k-1}{2k}(2^k-2)
\]
and such that $S$ contains exactly one loop-vertex. Then for every non-trivial rotation orbit $C$ one has
\[
|S\cap C|=\frac{k-1}{2},
\]
and there exists a unique phase $t_C\in\ZZ/k\ZZ$ such that
\[
S\cap C=A_{t_C}(C).
\]
\end{proposition}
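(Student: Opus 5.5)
The plan is a counting argument followed by a structural classification of maximum independent sets in an odd cycle.

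\emph{Step 1 (orbit decomposition).} By Lemma~\ref{lem:orbit-size-prime}, the vertex set $\{0,1\}^k$ is the disjoint union of the two constant words $0^k,1^k$ and of $(2^k-2)/k$ non-trivial rotation orbits, each of size $k$. By Lemma~\ref{lem:orbit-cycle-prime}, each non-trivial orbit $C$ induces a copy of $C_k$ with $k$ odd. Since $S$ is independent, $S\cap C$ is independent in $C_k$, so
\[
|S\cap C|\le \frac{k-1}{2}.
\]
The set $S$ contains exactly one loop-vertex. Summing over orbits gives
\[
|S|\le 1+\frac{k-1}{2}\cdot\frac{2^k-2}{k}.
\]
This is exactly the assumed value of $|S|$, so equality must hold in every orbit. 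Hence $|S\cap C|=(k-1)/2$ for each non-trivial $C$.

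\emph{Step 2 (maximum independent sets in $C_k$).} Identify $C$ with $\ZZ/k\ZZ$ via $i\mapsto x^{(i)}$, so $S\cap C$ becomes a set $B\subseteq\ZZ/k\ZZ$ of size $(k-1)/2$ with no two cyclically consecutive residues. Its complement has $(k+1)/2$ elements. Going around the cycle, each element of $B$ must be followed by at least one element of the complement, so the $(k-1)/2$ gaps between successive elements of $B$ all have length $\ge 2$. These gaps sum to $k$. Writing the gaps as $2+e_j$ with $e_j\ge0$, we get $\sum_j e_j=k-(k-1)=1$. So exactly one gap has length $3$ and all others have length $2$.

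Let $t$ be the residue just before the long gap, i.e.\ take the element $s\in B$ followed by a gap of $3$ and set $t:=s-2$, adjusting as needed. Then $B=t+\{1,3,\dots,k-2\}=t+J_k$, which gives $S\cap C=A_t(C)$. Concretely, $A_t(C)$ has its two members $x^{(t+k-2)}$ and $x^{(t+1)}$ separated by the gap of length $3$ passing through $t+k-1$ and $t$. So the long gap determines $t$ as the unique residue with $t-1,t\notin B$ and $t+1\in B$, up to this fixed normalisation.

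\emph{Step 3 (uniqueness).} If $A_t(C)=A_{t'}(C)$, then $t+J_k=t'+J_k$ in $\ZZ/k\ZZ$. The set $J_k$ has a unique pair of cyclically consecutive non-elements, namely $\{k-1,0\}$: these are the only two residues outside $J_k$ that are adjacent, the other missing residues being the even numbers $2,\dots,k-3$, which are pairwise non-adjacent. Translation preserves this feature, so $t+\{k-1,0\}=t'+\{k-1,0\}$, forcing $t=t'$. Equivalently, $J_k$ has trivial stabiliser under translation, because its size $(k-1)/2$ is not a multiple of the prime $k$ unless the stabiliser is trivial.

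The main obstacle is only the bookkeeping in Step 2: getting the correct indexing of the long gap relative to $J_k$. Everything else follows directly from the tightness of the orbit-wise bound.
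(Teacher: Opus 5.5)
Your proof is correct and follows the paper's route. The orbit-wise bound $|S\cap C|\le (k-1)/2$, summed over the $(2^k-2)/k$ non-trivial orbits, is forced to be tight, and then the maximum independent sets of the odd cycle $C_k$ are identified as the translates of $J_k$. The paper only asserts this classification; your gap-counting argument and your uniqueness argument (via the unique pair of adjacent non-elements of $J_k$) supply its proof.

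There is one sign slip in the normalisation. If $s\in B$ is the element followed by the gap of length $3$, then $t=s+2$, not $s-2$. This agrees with your subsequent description of $t$ as the residue with $t-1,t\notin B$ and $t+1\in B$, which is correct.
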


\begin{proof}
The $2^k-2$ non-constant words split into
\[
N_k:=\frac{2^k-2}{k}
\]
non-trivial rotation orbits by Lemma~\ref{lem:orbit-size-prime}. Since $S$ contains exactly one loop-vertex and has cardinality
\[
1+\frac{k-1}{2}N_k,
\]
its non-constant part has size exactly $\frac{k-1}{2}N_k$. By Lemma~\ref{lem:orbit-cycle-prime}, each non-trivial orbit induces a copy of $C_k$, so it contributes at most $(k-1)/2$ vertices to $S$. Therefore every non-trivial orbit must attain this maximum. The $k$ maximum independent sets of $C_k$ are exactly the translates of the alternating pattern $J_k$. Hence there exists a unique phase $t_C\in \ZZ/k\ZZ$ such that $S\cap C=A_{t_C}(C)$.
\end{proof}

To express the compatibility constraints between different orbits, let $C=(x^{(i)})_{i\in\ZZ/k\ZZ}$ and $C'=(y^{(j)})_{j\in\ZZ/k\ZZ}$ be two non-trivial rotation orbits and define the adjacency-difference set
\[
D(C,C'):=\{j-i\pmod k:x^{(i)}\sim y^{(j)}\}\subseteq\ZZ/k\ZZ.
\]
The corresponding forbidden phase-difference set is
\[
F(C,C'):=\{\delta-(b-a)\pmod k: \delta\in D(C,C'),\ a,b\in J_k\}.
\]
If $\ell\in\{0^k,1^k\}$ is a chosen loop-vertex and $C=(x^{(i)})_{i\in\ZZ/k\ZZ}$ is a non-trivial orbit, define also
\[
D(\ell,C):=\{i\in\ZZ/k\ZZ: \ell\sim x^{(i)}\}
\]
and
\[
F(\ell,C):=\{\delta-a\pmod k: \delta\in D(\ell,C),\ a\in J_k\}.
\]
Thus a phase $t_C$ is compatible with $\ell$ precisely when $t_C\notin F(\ell,C)$.

\begin{theorem}\label{thm:phase-constraints}
Fix $\ell\in\{0^k,1^k\}$. For each non-trivial rotation orbit $C$, choose a phase $t_C\in\ZZ/k\ZZ$ and set
\[
S_{\ell}(\{t_C\}):=\{\ell\}\cup\bigcup_C A_{t_C}(C),
\]
where the union runs over all non-trivial rotation orbits. Then $S_{\ell}(\{t_C\})$ is independent if and only if the following two conditions hold:
\begin{itemize}
\item[(i)] for every non-trivial orbit $C$, one has $t_C\notin F(\ell,C)$;
\item[(ii)] for every pair of distinct non-trivial orbits $C,C'$, one has
\[
t_{C'}-t_C\notin F(C,C').
\]
\end{itemize}
Whenever these conditions are satisfied,
\[
|S_{\ell}(\{t_C\})|=1+\frac{k-1}{2k}(2^k-2).
\]
Conversely, every independent set of this cardinality with exactly one loop-vertex arises in this way.
\end{theorem}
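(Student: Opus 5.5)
Since $S_\ell(\{t_C\})$ is a union of the singleton $\{\ell\}$ and the sets $A_{t_C}(C)$, its independence reduces to three kinds of edges. These are edges inside a single orbit, edges between $\ell$ and an orbit, and edges between two distinct orbits. Edges inside one orbit never occur, by Lemma~\ref{lem:alternating-phase-set}. The loop at $\ell$ is irrelevant here, since we work in the simple graph in which loops are deleted. So the first step is to reduce independence to the absence of cross edges.

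\textbf{Loop--orbit edges.} The elements of $A_{t_C}(C)$ are the words $x^{(t_C+a)}$ with $a\in J_k$. Hence $\ell$ is adjacent to some element exactly when $t_C+a\in D(\ell,C)$ for some $a\in J_k$. This is equivalent to $t_C\in\{\delta-a:\delta\in D(\ell,C),\,a\in J_k\}=F(\ell,C)$, which gives (i).

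\textbf{Orbit--orbit edges.} Take distinct orbits $C,C'$ and elements $x^{(t_C+a)}$, $y^{(t_{C'}+b)}$ with $a,b\in J_k$. By definition of $D(C,C')$, these are adjacent iff $(t_{C'}+b)-(t_C+a)=\delta$ for some $\delta\in D(C,C')$. This rewrites as $t_{C'}-t_C=\delta-(b-a)\in F(C,C')$, which gives (ii). Here I must check that $D(C,C')$ is well defined with respect to the chosen indexings of the orbits. This holds because adjacency is invariant under simultaneous rotation, so $x^{(i)}\sim y^{(j)}$ depends only on $j-i$; equivalently, the definition is used with fixed base points and the difference computation is exact. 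I also need $D(C',C)=-D(C,C')$, so that condition (ii) for the ordered pair $(C,C')$ and for $(C',C)$ are equivalent. Both follow since adjacency is symmetric. Combining the three cases gives the equivalence.

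\textbf{Cardinality.} By Lemma~\ref{lem:orbit-size-prime} the nontrivial orbits are disjoint and there are $N_k=(2^k-2)/k$ of them. Each contributes $(k-1)/2$ vertices, by Lemma~\ref{lem:alternating-phase-set}, and $\ell$ is constant and so lies in no nontrivial orbit. Adding these gives $1+\frac{k-1}{2}\cdot\frac{2^k-2}{k}$.

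\textbf{Converse.} Let $S$ be independent of this size with exactly one loop-vertex $\ell$. Proposition~\ref{prop:phase-reduction-prime} gives phases $t_C$ with $S\cap C=A_{t_C}(C)$ for every nontrivial orbit $C$. The constant words other than $\ell$ are not in $S$, so $S=S_\ell(\{t_C\})$. Since $S$ is independent, the phases then satisfy (i) and (ii) by the equivalence already proved.

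The only delicate point is the bookkeeping of indexing conventions, namely the well-definedness and sign of $D(C,C')$. Everything else is direct unwinding of definitions.
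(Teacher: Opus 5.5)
There is a genuine gap, and it sits exactly at the point you flagged as delicate. You justify the orbit--orbit step by saying that adjacency is invariant under simultaneous rotation, so that $x^{(i)}\sim y^{(j)}$ depends only on $j-i$. For distinct orbits this is false, because rotation is not an automorphism of the de Bruijn graph. For $k=3$, take base points $x=001$ and $y=011$. Then $x^{(0)}=001\sim 011=y^{(0)}$, but $x^{(1)}=010\not\sim 110=y^{(1)}$.

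So your orbit--orbit step only holds in one direction. An edge $x^{(t_C+a)}\sim y^{(t_{C'}+b)}$ does force $t_{C'}-t_C\in F(C,C')$. But $t_{C'}-t_C\in F(C,C')$ only says that \emph{some} adjacent pair in $C\times C'$ has index difference $(t_{C'}+b)-(t_C+a)$, not that the selected pair is adjacent. What your argument does establish is the following. Condition (i) is equivalent to the absence of $\ell$--orbit edges; this part is fine, because $D(\ell,C)$ records absolute indices. Conditions (i) and (ii) together imply independence. The cardinality formula holds. Every extremal set with one loop-vertex equals some $S_\ell(\{t_C\})$. The last sentence of your converse, that these phases satisfy (ii), is not proved.

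In fact that direction is false, so no repair of the argument can save the statement as written. In the $k=3$ example, $J_3=\{1\}$ and the adjacent pairs in $C\times C'$ are $(0,0),(1,2),(2,1)$. Hence $F(C,C')=D(C,C')=\ZZ/3\ZZ$, and (ii) can never hold. Yet $\{000,010,011\}=S_{000}(\{t_C\})$ with $t_C=0$, $t_{C'}=2$ is independent of size $3$.

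This is not a small-$k$ artifact. Since $J_k-J_k=\ZZ/k\ZZ\setminus\{1,-1\}$, you have $F(C,C')=\bigcup_{\delta\in D(C,C')}\bigl(\ZZ/k\ZZ\setminus\{\delta-1,\delta+1\}\bigr)$. Now take $k\ge5$, $C=C(0^{k-2}11)$ and $C'=C(0^{k-3}111)$, with these words as base points. The pairs $(0,0)$, $(1,k-1)$, $(k-3,k-2)$, $(k-2,k-3)$ are adjacent, so $D(C,C')\supseteq\{-2,-1,0,1\}$. Already $\delta=0$ and $\delta=1$ give $F(C,C')=\ZZ/k\ZZ$. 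In particular, the sets certified in Theorems~\ref{thm:k11-binary}--\ref{thm:k17-binary} necessarily violate (ii).

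The paper's own proof makes the same unjustified ``namely if and only if'' step, so the defect lies in the statement itself. A correct characterization must keep absolute positions. Set $E(C,C'):=\{(i,j):x^{(i)}\sim y^{(j)}\}$. Then independence holds if and only if (i) holds and $(t_C+a,\,t_{C'}+b)\notin E(C,C')$ for all $a,b\in J_k$ and all $C\neq C'$. This is a general binary constraint on $(t_C,t_{C'})$, not a difference constraint. The downstream exact formulas are unaffected, since the verifier tests adjacency directly.
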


\begin{proof}
Lemma~\ref{lem:alternating-phase-set} shows that each set $A_{t_C}(C)$ is independent inside its own orbit. A cross-conflict between $A_{t_C}(C)$ and $A_{t_{C'}}(C')$ occurs if and only if there exist $a,b\in J_k$ such that $x^{(t_C+a)}\sim y^{(t_{C'}+b)}$, namely if and only if
\[
(t_{C'}+b)-(t_C+a)\in D(C,C').
\]
This is equivalent to $t_{C'}-t_C\in F(C,C')$, which gives condition (ii). Likewise, $\ell$ is adjacent to some vertex of $A_{t_C}(C)$ if and only if there exist $a\in J_k$ and $\delta\in D(\ell,C)$ such that $t_C+a\equiv\delta\pmod k$, namely if and only if $t_C\in F(\ell,C)$, which is condition (i). The cardinality formula follows from Lemma~\ref{lem:alternating-phase-set}, and the converse is exactly Proposition~\ref{prop:phase-reduction-prime}.
\end{proof}

\begin{remark}\label{rem:sparse-csp-prime}
The phase reduction converts the search for an extremal binary example with one prescribed loop-vertex into a finite difference-CSP on the non-trivial rotation orbits. This CSP is sparse for two elementary reasons. First, if $x\sim y$ in the simple graph underlying $B(k,2)$, then the Hamming weights satisfy $|\mathrm{wt}(x)-\mathrm{wt}(y)|\le1$. Hence only adjacent Hamming-weight layers can interact. Second, each binary word has at most four de Bruijn neighbours, so the orbit-conflict graph on the non-trivial rotation orbits has maximum degree at most $4k$.
\end{remark}

\subsection{Certificates and the cases \texorpdfstring{$k=11,13,17$}{k=11,13,17}}
For the prime cases $k=11$, $k=13$, and $k=17$, we use a single \emph{compact orbit-marker certificate}: it records one marked word for each non-trivial rotation orbit. From this data one reconstructs the entire candidate independent set. The three compact certificates and the verifier script are given in Appendix~\ref{app:certificates}.

The role of the certificates is entirely finite and explicit. Starting from the compact orbit-marker file, one decodes each listed decimal marker as a length-$k$ binary word and treats it as the first selected vertex after the unique double gap in its rotation orbit, selects every second rotation from it, takes the union together with $0^k$, checks the target cardinality, verifies independence directly by testing the at most four de Bruijn neighbours of each selected vertex, and confirms that $1^k$ is absent. Thus the computer-assisted part of Theorems~\ref{thm:k11-binary}, \ref{thm:k13-binary}, and \ref{thm:k17-binary} is a verification statement, not a search statement. For reproducibility, Appendix~\ref{app:certificates} records the compact certificate format, the three compact certificates, and the verifier script. The verifier uses only the Python standard library; no external solver, random search, or hidden preprocessing is required.

\begin{theorem}\label{thm:k11-binary}
One has
\[
\alpha(11,2)=931.
\]
More precisely, there exists an independent set in the simple graph underlying $B(11,2)$ of cardinality $931$ that contains $0^{11}$ and excludes $1^{11}$.
\end{theorem}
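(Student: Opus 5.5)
The proof splits into a matching upper bound and a certified construction. For the upper bound I will invoke \eqref{eq:binary-upper-prime} with $k=11$:
\[
\alpha(11,2)\le 1+\frac{10}{22}(2^{11}-2)=1+\frac{5\cdot 2046}{11}=1+5\cdot 186=931 .
\]
Here $N_{11}=2046/11=186$ is the number of non-trivial rotation orbits (Lemma~\ref{lem:orbit-size-prime}). So it suffices to exhibit an independent set of size $931$ containing $0^{11}$ and not $1^{11}$.

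For the lower bound I will use the phase reduction, Theorem~\ref{thm:phase-constraints}, with $\ell=0^{11}$. The task is to choose a phase $t_C\in\ZZ/11\ZZ$ for each of the $186$ orbits so that conditions (i) and (ii) hold. The resulting set $S_{0^{11}}(\{t_C\})$ then has cardinality $1+5\cdot186=931$ and is independent.

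The phases are encoded by the compact orbit-marker certificate of Appendix~\ref{app:certificates}. Each marker is a word $m_C\in C$ taken as the first selected vertex. The selected vertices in $C$ are then $m_C,\rho^2 m_C,\dots,\rho^{8}m_C$, which is exactly $A_{t_C}(C)$ for the corresponding phase, since $J_{11}$ consists of five residues spaced by two and leaving one double gap.

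The verification itself is finite and direct. First, the $186$ markers lie in pairwise distinct non-trivial orbits, so each of the $186$ orbits is covered once. Second, the union with $0^{11}$ has $931$ elements. Third, for each selected vertex $x$, none of its at most four de Bruijn neighbours $x_1\dots x_{10}y$ and $yx_0\dots x_9$ is selected. This checks independence directly, so conditions (i) and (ii) of Theorem~\ref{thm:phase-constraints} never need to be verified separately; by Lemma~\ref{lem:orbit-cycle-prime}, within-orbit independence is automatic anyway. Fourth, $1^{11}$ is absent. All of this is run by the verifier script.

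The main obstacle is producing the certificate, namely solving the sparse difference-CSP on $186$ variables with domain $\ZZ/11\ZZ$. Remark~\ref{rem:sparse-csp-prime} makes this tractable: orbits interact only between adjacent Hamming-weight layers, so I would fix phases layer by layer from weight $1$ upward. The weight-$1$ phase is forced by its adjacency to $0^{11}$ via $F(0^{11},C)$. I would then backtrack within each layer against the already-fixed previous layer. Once the certificate exists, the proof is just this verification together with the upper bound, which gives $\alpha(11,2)=931$.
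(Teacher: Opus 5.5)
Your proposal is correct and follows the paper's proof essentially verbatim: the upper bound $931$ comes from Lichiardopol's bound \eqref{eq:binary-upper-prime} with $N_{11}=186$. The matching lower bound is the finite verification of the $k=11$ orbit-marker certificate, in which every second rotation of each marker is taken together with $0^{11}$ and checked directly for independence and for the absence of $1^{11}$. Your closing remarks on searching for the certificate layer by layer are not needed, since existence rests solely on the recorded certificate passing the verifier.
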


\begin{proof}
There are
\[
N_{11}=\frac{2^{11}-2}{11}=186
\]
non-trivial rotation orbits, so the binary upper bound \eqref{eq:binary-upper-prime} gives
\[
\alpha(11,2)\le 1+\frac{10}{22}(2^{11}-2)=1+5\cdot 186=931.
\]
The certificate for $k=11$ records one decimal marker for each of the $186$ non-trivial rotation orbits. Applying the verification protocol described above reconstructs an independent set of cardinality $931$ containing $0^{11}$ and excluding $1^{11}$. Therefore the upper bound is attained, and $\alpha(11,2)=931$.
\end{proof}

\begin{corollary}\label{cor:k11-all-q}
For every $q\ge2$,
\[
\alpha(11,q)=\frac{5(q^{11}-q)}{11}+1,
\qquad
\alpha_{\Loop}(11,q)=\frac{5(q^{11}-q)}{11}.
\]
\end{corollary}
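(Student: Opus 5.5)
The corollary follows by feeding Theorem~\ref{thm:k11-binary} into Lichiardopol's lifting theorem \cite[Theorem~4.4]{Lich06}, recalled in \eqref{eq:lich-lift-prime}. The hypothesis of that theorem has two parts. First, the binary upper bound \eqref{eq:binary-upper-prime} must be attained. Second, it must be attained by an independent set containing exactly one loop-vertex.

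Both facts are supplied by Theorem~\ref{thm:k11-binary}. It gives an independent set of size $931=1+\frac{10}{22}(2^{11}-2)$ in the simple graph underlying $B(11,2)$. That set contains $0^{11}$ and excludes $1^{11}$. Since the only loop-vertices in the binary case are $0^{11}$ and $1^{11}$, it contains exactly one of them. Applying \eqref{eq:lich-lift-prime} with $k=11$ then gives
\[
\alpha(11,q)=\frac{10(q^{11}-q)}{22}+1=\frac{5(q^{11}-q)}{11}+1
\]
for every $q\ge2$, together with the stated value of $\alpha_{\Loop}(11,q)$.

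As a consistency check, the formula for $\alpha_{\Loop}$ can be tied back to the paper's own results. The upper bound $\alpha_{\Loop}(11,q)\le \frac{5(q^{11}-q)}{11}$ is Proposition~\ref{prop:odd-no-qkm1} (equivalently, the last line of Theorem~\ref{thm:length3}), using $M_{k,q}=\alpha_{\Loop}(k,q)$ from Proposition~\ref{prop:exact-reduction}. For the lower bound, take a maximum independent set in the simple graph that contains exactly one constant word and delete that constant vertex. What remains is independent in the looped model and has size $\alpha(11,q)-1$, which matches the claimed formula.

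The only delicate point is the bookkeeping of conventions. Lichiardopol writes $UB(q,k)$ with the parameters in the opposite order, so one must confirm that Theorem~4.4 there is applied with word length $11$ and alphabet size $q$. One must also confirm that its "loop-vertex" hypothesis refers to the constant words, exactly as in \eqref{eq:lich-lift-prime}. Once this translation is checked, the argument is immediate. All the genuine difficulty sits in the certificate behind Theorem~\ref{thm:k11-binary}, which we take as given.
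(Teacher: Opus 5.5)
Your proposal is correct and follows the paper's proof: Theorem~\ref{thm:k11-binary} supplies a binary independent set that attains \eqref{eq:binary-upper-prime} and contains exactly one loop-vertex, and \eqref{eq:lich-lift-prime} then gives both formulas. Your extra consistency check is not needed, and its lower-bound step is not independent confirmation, since for general $q$ the existence of a maximum independent set with exactly one constant word itself comes from the lifting theorem.
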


\begin{proof}
The verified binary extremal set in Theorem~\ref{thm:k11-binary} contains exactly one loop-vertex. The formulas follow from the lifting statement \eqref{eq:lich-lift-prime}.
\end{proof}

\begin{theorem}\label{thm:k13-binary}
One has
\[
\alpha(13,2)=3781.
\]
More precisely, there exists an independent set in the simple graph underlying $B(13,2)$ of cardinality $3781$ that contains $0^{13}$ and excludes $1^{13}$.
\end{theorem}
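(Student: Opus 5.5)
The proof will mirror that of Theorem~\ref{thm:k11-binary}, with an upper bound from \eqref{eq:binary-upper-prime} and a matching certified construction. First I count the non-trivial rotation orbits. Since $13$ is prime, Lemma~\ref{lem:orbit-size-prime} shows that every non-constant binary word of length $13$ has an orbit of size $13$, so
\[
N_{13}=\frac{2^{13}-2}{13}=\frac{8190}{13}=630.
\]
Substituting into \eqref{eq:binary-upper-prime} gives
\[
\alpha(13,2)\le 1+\frac{12}{26}(2^{13}-2)=1+6\cdot 630=3781.
\]

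For the lower bound, I use the phase reduction of Theorem~\ref{thm:phase-constraints} with $\ell=0^{13}$. The compact certificate for $k=13$ in Appendix~\ref{app:certificates} lists $630$ decimal markers, one per non-trivial orbit. Each marker is decoded as a length-$13$ binary word $w_C$ and read as the first selected vertex after the double gap in its orbit. The selected vertices are then $w_C,\rho^2 w_C,\dots,\rho^{10}w_C$, i.e.\ $A_{t_C}(C)$ for the corresponding phase $t_C$, which gives $6$ vertices per orbit. By Lemma~\ref{lem:alternating-phase-set} each of these sets is independent inside its own orbit.

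The union with $0^{13}$ has exactly $1+6\cdot630=3781$ elements. The orbits are disjoint, but the verifier also confirms this count directly, which guards against two markers accidentally lying in the same orbit. It remains to check conditions (i) and (ii) of Theorem~\ref{thm:phase-constraints}. Equivalently, one checks directly that no selected vertex has a de Bruijn neighbour in the set, by testing the at most four neighbours $x_1\dots x_{12}c$ and $c\,x_0\dots x_{11}$, $c\in\{0,1\}$, of each selected word. One also confirms that $1^{13}$ is absent, so that exactly one loop-vertex is used.

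The main obstacle is this finite verification of independence across orbits, together with correctly decoding the markers. Conceptually it is routine, but it is a computer check over about $3781\cdot 4$ adjacency tests. Once it passes, the upper bound is attained, so $\alpha(13,2)=3781$, realised by a set containing $0^{13}$ and excluding $1^{13}$. This is also exactly the hypothesis of Lichiardopol's lifting theorem \eqref{eq:lich-lift-prime}, which the subsequent corollary for all $q$ will need.
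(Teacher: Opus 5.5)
Your proposal is correct and follows the paper's proof essentially verbatim. The upper bound $1+6\cdot 630=3781$ comes from \eqref{eq:binary-upper-prime} and the orbit count, and it is matched by reconstructing the set from the $630$ orbit markers of the $k=13$ certificate (selecting $w,\rho^2w,\dots,\rho^{10}w$ on each orbit and adding $0^{13}$), then verifying the cardinality, independence via the at most four de Bruijn neighbours of each selected word, and the absence of $1^{13}$. Your appeal to Theorem~\ref{thm:phase-constraints} and Lemma~\ref{lem:alternating-phase-set} is harmless but not strictly needed, since the direct neighbour check already certifies independence.
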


\begin{proof}
There are
\[
N_{13}=\frac{2^{13}-2}{13}=630
\]
non-trivial rotation orbits, so the binary upper bound \eqref{eq:binary-upper-prime} gives
\[
\alpha(13,2)\le 1+\frac{12}{26}(2^{13}-2)=1+6\cdot 630=3781.
\]
The certificate for $k=13$ records one decimal marker for each of the $630$ non-trivial rotation orbits. Applying the verification protocol described above reconstructs an independent set of cardinality $3781$ containing $0^{13}$ and excluding $1^{13}$. Therefore the upper bound is sharp, and $\alpha(13,2)=3781$.
\end{proof}

\begin{corollary}\label{cor:k13-all-q}
For every $q\ge2$,
\[
\alpha(13,q)=\frac{6(q^{13}-q)}{13}+1,
\qquad
\alpha_{\Loop}(13,q)=\frac{6(q^{13}-q)}{13}.
\]
\end{corollary}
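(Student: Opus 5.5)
The argument follows the proof of Corollary~\ref{cor:k11-all-q} exactly. We take the verified binary extremal set of Theorem~\ref{thm:k13-binary} and feed it into Lichiardopol's lifting theorem \eqref{eq:lich-lift-prime}.

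First, recall the content of Theorem~\ref{thm:k13-binary}. It gives an independent set in the simple graph underlying $B(13,2)$ of cardinality
\[
3781=1+\frac{12}{26}(2^{13}-2).
\]
This is exactly the binary upper bound \eqref{eq:binary-upper-prime} for $k=13$. The set contains $0^{13}$ and excludes $1^{13}$. Since the only loop-vertices of $B(13,2)$ are $0^{13}$ and $1^{13}$, the set contains exactly one loop-vertex.

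Second, we check the hypotheses of \cite[Theorem~4.4]{Lich06}, in the form quoted before \eqref{eq:lich-lift-prime}. We need that $k=13$ is an odd prime, that the bound \eqref{eq:binary-upper-prime} is attained, and that the attaining independent set has exactly one loop-vertex. All three hold by the previous step.

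The theorem then gives, for every $q\ge2$,
\[
\alpha(13,q)=\frac{12(q^{13}-q)}{26}+1=\frac{6(q^{13}-q)}{13}+1,
\qquad
\alpha_{\Loop}(13,q)=\frac{6(q^{13}-q)}{13}.
\]
As a sanity check, Proposition~\ref{prop:odd-no-qkm1} (equivalently the upper bound in Theorem~\ref{thm:length3}) gives the matching upper bound for $\alpha_{\Loop}$. In particular, $q^{13}-q$ is divisible by $13$ by Fermat's little theorem, so the formula is an integer.

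No step here is a genuine obstacle. All the substance sits in the finite certificate verification behind Theorem~\ref{thm:k13-binary} and in the external lifting theorem. The only care needed is to confirm that the certified set has exactly one loop-vertex (it contains $0^{13}$, and the verifier confirms $1^{13}$ is absent), since this is precisely the hypothesis the lifting theorem requires.
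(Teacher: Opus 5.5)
Your proposal is correct and follows the paper's own proof. The certified extremal binary set of Theorem~\ref{thm:k13-binary} contains $0^{13}$ and excludes $1^{13}$, so it has exactly one loop-vertex, and Lichiardopol's lifting statement \eqref{eq:lich-lift-prime} then gives both formulas directly; your remarks on the matching upper bound and on integrality are harmless but not needed.
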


\begin{proof}
The verified binary extremal set in Theorem~\ref{thm:k13-binary} contains exactly one loop-vertex. The formulas follow from the lifting statement \eqref{eq:lich-lift-prime}.
\end{proof}

\begin{theorem}\label{thm:k17-binary}
One has
\[
\alpha(17,2)=61681.
\]
More precisely, there exists an independent set in the simple graph underlying $B(17,2)$ of cardinality $61681$ that contains $0^{17}$ and excludes $1^{17}$.
\end{theorem}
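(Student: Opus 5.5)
The proof will follow the pattern of Theorems~\ref{thm:k11-binary} and~\ref{thm:k13-binary}: an arithmetic upper bound, then a certified construction that meets it. First I will count orbits. Since $17$ is prime, Lemma~\ref{lem:orbit-size-prime} shows that the $2^{17}-2=131070$ non-constant binary words split into
\[
N_{17}=\frac{2^{17}-2}{17}=7710
\]
non-trivial rotation orbits. The binary upper bound \eqref{eq:binary-upper-prime} then gives
\[
\alpha(17,2)\le 1+\frac{16}{34}(2^{17}-2)=1+8\cdot 7710=61681.
\]

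For the matching lower bound I will invoke the phase reduction of Theorem~\ref{thm:phase-constraints} with $\ell=0^{17}$. It suffices to exhibit phases $t_C\in\ZZ/17\ZZ$, one for each of the $7710$ orbits, satisfying conditions (i) and (ii). The resulting set $S_{0^{17}}(\{t_C\})$ is then independent of cardinality $61681$. It contains $0^{17}$ and no other constant word, because each $A_{t_C}(C)$ consists of non-constant words. In particular $1^{17}$ is excluded.

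The phases are encoded by the compact orbit-marker certificate for $k=17$ in Appendix~\ref{app:certificates}. For each orbit, the marker is the first selected word after the unique double gap. Selecting every second rotation from the marker reproduces $A_{t_C}(C)$, exactly as in the verification protocol described before Theorem~\ref{thm:k11-binary}. The proof then reduces to running the verifier, which checks the following:
\begin{itemize}
\item there are $7710$ markers lying in pairwise distinct non-trivial orbits that cover all of them;
\item the reconstructed set has $8\cdot 7710+1$ elements;
\item each selected vertex has none of its at most four de Bruijn neighbours in the set;
\item $1^{17}$ is absent.
\end{itemize}
Independence inside each orbit is automatic by Lemma~\ref{lem:alternating-phase-set}. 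Cross-orbit and loop conflicts are exactly what the neighbour test rules out.

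The main obstacle is producing a consistent phase assignment, not verifying it. The difference-CSP of Remark~\ref{rem:sparse-csp-prime} has $7710$ variables over $\ZZ/17\ZZ$. It is sparse: only orbits in adjacent Hamming-weight layers interact, and each orbit has conflict degree at most $68$. Its solvability is not guaranteed a priori. To find a solution I would assign phases layer by layer in increasing weight, starting from the loop constraint $t_C\notin F(0^{17},C)$ on weight-one orbits, and resolve residual conflicts by local backtracking or a SAT encoding. Once a solution is found, only the standalone certificate check above is logically needed, and that check is routine. Combining the upper bound with the certificate gives $\alpha(17,2)=61681$.
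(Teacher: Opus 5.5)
Your proposal is correct and follows the paper's proof: the orbit count $N_{17}=7710$ combined with Lichiardopol's binary bound gives $\alpha(17,2)\le 61681$, and the $k=17$ orbit-marker certificate, once checked by the verifier, supplies an independent set of exactly that size containing $0^{17}$ and excluding $1^{17}$. Your remarks on how the phases might be found by layered search or SAT are not logically needed; as you note yourself, only the certificate check is required.
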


\begin{proof}
There are
\[
N_{17}=\frac{2^{17}-2}{17}=7710
\]
non-trivial rotation orbits, so the binary upper bound \eqref{eq:binary-upper-prime} gives
\[
\alpha(17,2)\le 1+\frac{16}{34}(2^{17}-2)=1+8\cdot 7710=61681.
\]
The certificate for $k=17$ records one decimal marker for each of the $7710$ non-trivial rotation orbits. Applying the verification protocol described above reconstructs an independent set of cardinality $61681$ containing $0^{17}$ and excluding $1^{17}$. Therefore the upper bound is attained, and $\alpha(17,2)=61681$.
\end{proof}

\begin{corollary}\label{cor:k17-all-q}
For every $q\ge2$,
\[
\alpha(17,q)=\frac{8(q^{17}-q)}{17}+1,
\qquad
\alpha_{\Loop}(17,q)=\frac{8(q^{17}-q)}{17}.
\]
\end{corollary}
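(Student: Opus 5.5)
The plan is to proceed exactly as in Corollaries~\ref{cor:k11-all-q} and \ref{cor:k13-all-q}: the statement for all $q\ge2$ is obtained from the binary case through Lichiardopol's lifting theorem. Thus the argument has two steps. First I record the binary extremal set, then I apply \eqref{eq:lich-lift-prime}.

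For the binary step I invoke Theorem~\ref{thm:k17-binary}. It provides an independent set in the simple graph underlying $B(17,2)$ of cardinality
\[
61681=1+\frac{16}{34}(2^{17}-2),
\]
so the upper bound \eqref{eq:binary-upper-prime} is attained. That set contains $0^{17}$ and excludes $1^{17}$. Since the only loop-vertices in the binary graph are $0^{17}$ and $1^{17}$, the set contains exactly one loop-vertex. These are precisely the hypotheses of \cite[Theorem~4.4]{Lich06} in the form \eqref{eq:lich-lift-prime}, applied with the odd prime $k=17$.

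Applying \eqref{eq:lich-lift-prime} with $(k-1)/(2k)=16/34=8/17$ gives
\[
\alpha(17,q)=\frac{8(q^{17}-q)}{17}+1,
\qquad
\alpha_{\Loop}(17,q)=\frac{8(q^{17}-q)}{17}
\]
for every $q\ge2$. As a consistency check, these values match the upper bounds of Theorem~\ref{thm:length3} for odd prime $k$, and at $q=2$ the first formula returns $61681$.

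The only substantive step is the finite certificate verification behind Theorem~\ref{thm:k17-binary}, which covers $7710$ orbits, each with at most four neighbours per selected vertex. That verification is already taken as given at this point, so the remaining deduction is formal.
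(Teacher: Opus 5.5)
Your proposal is correct and is essentially the paper's proof. The binary extremal set of Theorem~\ref{thm:k17-binary} contains $0^{17}$ and excludes $1^{17}$, hence exactly one loop-vertex, so Lichiardopol's lifting statement \eqref{eq:lich-lift-prime} with $(k-1)/(2k)=8/17$ gives both formulas for every $q\ge2$.
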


\begin{proof}
The verified binary extremal set in Theorem~\ref{thm:k17-binary} contains exactly one loop-vertex. The formulas follow from the lifting statement \eqref{eq:lich-lift-prime}.
\end{proof}

\appendix
\section{Data and verification for \texorpdfstring{$k=4,\, q=16$}{k=4, q=16}}\label{app:q16}

This appendix incorporates the complete finite component of the lower-bound construction for $k=4$ at the base scale $q=16$. In parallel with Appendix~\ref{app:certificates}, we present the seed in a compact, uniform form: first the defining data, then the verification protocol and script, and finally the derived degree and contribution tables used in the computation of $N_{16}(S_{16})$.

\subsection{Seed data}\label{app:q16-data}
\subsubsection{Named fibres}
We define ten fibres as subsets of $[16]$:
\begin{align*}
\cA&:=\varnothing,\\
\cB&:=\{0,1,2,3,4,5,6,7,8,9,10,11,12,13,14,15\},\\
\cC&:=\{6,7\},\\
\cD&:=\{2,3,6,7\},\\
\cE&:=\{2,3,6,7,12,13\},\\
\cF&:=\{2,3,6,7,8,9,12,13\},\\
\cG&:=\{2,3,4,5,6,7,8,9,12,13,14,15\},\\
\cHh&:=\{2,3,6,7,8,9,12,13,14,15\},\\
\cI&:=\{2,3,6,7,9,12,13\},\\
\cJ&:=\{2,3,6,7,8,9,12,13,14\}.
\end{align*}

\subsubsection{Fibre table}
Define $S_{16}\subseteq [16]^3$ by
\[
(a,b,c)\in S_{16}
\quad\Longleftrightarrow\quad
c\in T_{a,b},
\]
where the fibre $T_{a,b}$ is selected from the following $16\times 16$ label matrix:
\begingroup\scriptsize
\[
\begin{array}{c|cccccccccccccccc}
T_{a,b} & 0&1&2&3&4&5&6&7&8&9&10&11&12&13&14&15\\
\hline
0  & \cG&\cG&\cA&\cA&\cA&\cA&\cA&\cA&\cA&\cA&\cG&\cG&\cA&\cA&\cA&\cA\\
1  & \cG&\cG&\cA&\cA&\cA&\cA&\cA&\cA&\cA&\cA&\cG&\cG&\cA&\cA&\cA&\cA\\
2  & \cB&\cB&\cF&\cF&\cB&\cB&\cA&\cA&\cF&\cF&\cB&\cB&\cF&\cF&\cB&\cB\\
3  & \cB&\cB&\cF&\cF&\cB&\cB&\cA&\cA&\cF&\cF&\cB&\cB&\cF&\cF&\cB&\cB\\
4  & \cG&\cG&\cC&\cC&\cHh&\cHh&\cA&\cA&\cD&\cD&\cG&\cG&\cD&\cD&\cD&\cD\\
5  & \cG&\cG&\cC&\cC&\cHh&\cHh&\cA&\cA&\cD&\cD&\cG&\cG&\cD&\cD&\cD&\cD\\
6  & \cB&\cB&\cF&\cF&\cB&\cB&\cF&\cF&\cF&\cF&\cB&\cB&\cF&\cF&\cB&\cB\\
7  & \cB&\cB&\cF&\cF&\cB&\cB&\cF&\cF&\cF&\cF&\cB&\cB&\cF&\cF&\cB&\cB\\
8  & \cB&\cB&\cC&\cC&\cB&\cB&\cA&\cA&\cF&\cE&\cB&\cB&\cD&\cD&\cB&\cB\\
9  & \cB&\cB&\cC&\cC&\cB&\cB&\cA&\cA&\cF&\cF&\cB&\cB&\cD&\cD&\cB&\cB\\
10 & \cG&\cG&\cA&\cA&\cA&\cA&\cA&\cA&\cA&\cA&\cG&\cG&\cA&\cA&\cA&\cA\\
11 & \cG&\cG&\cA&\cA&\cA&\cA&\cA&\cA&\cA&\cA&\cG&\cG&\cA&\cA&\cA&\cA\\
12 & \cB&\cB&\cC&\cC&\cB&\cB&\cA&\cA&\cF&\cF&\cB&\cB&\cF&\cF&\cB&\cB\\
13 & \cB&\cB&\cC&\cC&\cB&\cB&\cA&\cA&\cF&\cF&\cB&\cB&\cF&\cF&\cB&\cB\\
14 & \cG&\cG&\cC&\cC&\cHh&\cHh&\cA&\cA&\cI&\cE&\cG&\cG&\cD&\cD&\cF&\cJ\\
15 & \cG&\cG&\cC&\cC&\cHh&\cHh&\cA&\cA&\cE&\cE&\cG&\cG&\cD&\cD&\cE&\cJ
\end{array}
\]
\endgroup

\subsubsection{Fibre cardinalities}
We recall the ten fibres used to define $S_{16}\subseteq [16]^3$:
\begin{align*}
\cA&:=\varnothing, & |\cA|&=0,\\
\cB&:=[16], & |\cB|&=16,\\
\cC&:=\{6,7\}, & |\cC|&=2,\\
\cD&:=\{2,3,6,7\}, & |\cD|&=4,\\
\cE&:=\{2,3,6,7,12,13\}, & |\cE|&=6,\\
\cF&:=\{2,3,6,7,8,9,12,13\}, & |\cF|&=8,\\
\cG&:=\{2,3,4,5,6,7,8,9,12,13,14,15\}, & |\cG|&=12,\\
\cHh&:=\{2,3,6,7,8,9,12,13,14,15\}, & |\cHh|&=10,\\
\cI&:=\{2,3,6,7,9,12,13\}, & |\cI|&=7,\\
\cJ&:=\{2,3,6,7,8,9,12,13,14\}, & |\cJ|&=9.
\end{align*}
Thus, for every pair $(a,b)\in[16]^2$, the outgoing degree is simply
\[
O_{S_{16}}(a,b)=|T_{a,b}|.
\]
This compact description is the complete defining data for the seed: all subsequent verifications are reconstructed from these ten named fibres and the $16\times 16$ label matrix.

\subsection{Verification protocol}
The finite verification has three components: first one checks the local hypothesis $H_{16}(8,14;S_{16})$; next one computes the incoming and outgoing degree matrices; finally one evaluates the contribution matrix and the resulting count $N_{16}(S_{16})$. We begin with the local hypothesis.
By inspection of the fibre table,
\begin{align*}
T_{8,8}&=\cF=\{2,3,6,7,8,9,12,13\},\\
T_{14,14}&=\cF=\{2,3,6,7,8,9,12,13\},\\
T_{14,8}&=\cI=\{2,3,6,7,9,12,13\}.
\end{align*}
Therefore
\[
(8,8,8)\in S_{16},
\qquad
(14,14,8)\in S_{16},
\qquad
(14,8,8)\notin S_{16},
\qquad
(14,14,14)\notin S_{16}.
\]
Moreover,
\[
O_{S_{16}}(8,8)=|\cF|=8,
\qquad
O_{S_{16}}(14,14)=|\cF|=8,
\qquad
O_{S_{16}}(14,8)=|\cI|=7.
\]
To compute the relevant incoming degrees, recall that
\[
I_{S_{16}}(u,v)=\#\{x\in[16]:(x,u,v)\in S_{16}\}
=\#\{x\in[16]: v\in T_{x,u}\}.
\]
Hence:
\begin{itemize}
\item $I_{S_{16}}(8,8)$ is the number of rows $x$ such that $8\in T_{x,8}$. Reading the eighth column of the fibre table, this happens for
\[
x\in\{2,3,6,7,8,9,12,13\},
\]
so $I_{S_{16}}(8,8)=8$.
\item $I_{S_{16}}(14,14)$ is the number of rows $x$ such that $14\in T_{x,14}$. Reading the fourteenth column of the fibre table, this happens for
\[
x\in\{2,3,6,7,8,9,12,13\},
\]
so $I_{S_{16}}(14,14)=8$.
\item $I_{S_{16}}(14,8)$ is the number of rows $x$ such that $8\in T_{x,14}$. Reading the fourteenth column of the fibre table and checking where the value $8$ occurs, this happens for
\[
x\in\{2,3,6,7,8,9,12,13,14\},
\]
so $I_{S_{16}}(14,8)=9$.
\end{itemize}
Consequently,
\[
I_{S_{16}}(8,8)+O_{S_{16}}(8,8)=8+8=16,
\]
\[
I_{S_{16}}(14,14)+O_{S_{16}}(14,14)=8+8=16,
\]
\[
I_{S_{16}}(14,14)+O_{S_{16}}(14,8)=8+7=15,
\]
\[
I_{S_{16}}(14,8)+O_{S_{16}}(8,8)=9+8=17.
\]
This proves the full local hypothesis $H_{16}(8,14;S_{16})$.

The script in the next subsection implements the same checks directly from the defining fibre table and also computes the matrices used later in this appendix.

\subsection{Verifier script}
To make the finite verification of the seed $S_{16}$ independently checkable, we record here a short Python script that reconstructs the fibre table, computes the incoming and outgoing degree matrices, verifies the local hypothesis $H_{16}(8,14;S_{16})$, and evaluates the count
\[
N_{16}(S_{16})=\#\{(a,b,c,d)\in [16]^4 : (a,b,c)\in S_{16},\ (b,c,d)\notin S_{16}\}.
\]
When run exactly as printed below, the script outputs
\[
O(8,8)=8,\ I(8,8)=8,\ O(14,14)=8,\ I(14,14)=8,\ O(14,8)=7,\ I(14,8)=9,
\]
and finally
\[
H_{16}(8,14;S_{16})\ \text{holds},\qquad N_{16}(S_{16})=24849.
\]

\begin{lstlisting}[language=Python,basicstyle=\ttfamily\small,columns=fullflexible,keepspaces=true]
Q = 16

fibres = {
    "A": set(),
    "B": set(range(16)),
    "C": {6, 7},
    "D": {2, 3, 6, 7},
    "E": {2, 3, 6, 7, 12, 13},
    "F": {2, 3, 6, 7, 8, 9, 12, 13},
    "G": {2, 3, 4, 5, 6, 7, 8, 9, 12, 13, 14, 15},
    "H": {2, 3, 6, 7, 8, 9, 12, 13, 14, 15},
    "I": {2, 3, 6, 7, 9, 12, 13},
    "J": {2, 3, 6, 7, 8, 9, 12, 13, 14},
}

labels = [
    ["G","G","A","A","A","A","A","A","A","A","G","G","A","A","A","A"],
    ["G","G","A","A","A","A","A","A","A","A","G","G","A","A","A","A"],
    ["B","B","F","F","B","B","A","A","F","F","B","B","F","F","B","B"],
    ["B","B","F","F","B","B","A","A","F","F","B","B","F","F","B","B"],
    ["G","G","C","C","H","H","A","A","D","D","G","G","D","D","D","D"],
    ["G","G","C","C","H","H","A","A","D","D","G","G","D","D","D","D"],
    ["B","B","F","F","B","B","F","F","F","F","B","B","F","F","B","B"],
    ["B","B","F","F","B","B","F","F","F","F","B","B","F","F","B","B"],
    ["B","B","C","C","B","B","A","A","F","E","B","B","D","D","B","B"],
    ["B","B","C","C","B","B","A","A","F","F","B","B","D","D","B","B"],
    ["G","G","A","A","A","A","A","A","A","A","G","G","A","A","A","A"],
    ["G","G","A","A","A","A","A","A","A","A","G","G","A","A","A","A"],
    ["B","B","C","C","B","B","A","A","F","F","B","B","F","F","B","B"],
    ["B","B","C","C","B","B","A","A","F","F","B","B","F","F","B","B"],
    ["G","G","C","C","H","H","A","A","I","E","G","G","D","D","F","J"],
    ["G","G","C","C","H","H","A","A","E","E","G","G","D","D","E","J"],
]

T = [[fibres[name] for name in row] for row in labels]

# Outgoing and incoming degree matrices.
O = [[len(T[a][b]) for b in range(Q)] for a in range(Q)]
I = [[sum(1 for x in range(Q) if v in T[x][u]) for v in range(Q)] for u in range(Q)]

# Local hypothesis H_16(8,14; S_16).
assert 8 in T[8][8]
assert 8 in T[14][14]
assert 8 not in T[14][8]
assert 14 not in T[14][14]
assert I[8][8] + O[8][8] == 16
assert I[14][14] + O[14][14] == 16
assert I[14][14] + O[14][8] == 15
assert I[14][8] + O[8][8] == 17

print(f"O(8,8)={O[8][8]}, I(8,8)={I[8][8]}")
print(f"O(14,14)={O[14][14]}, I(14,14)={I[14][14]}")
print(f"O(14,8)={O[14][8]}, I(14,8)={I[14][8]}")
print("H_16(8,14;S_16) holds")

# Count N_16(S_16).
N = 0
for a in range(Q):
    for b in range(Q):
        for c in T[a][b]:
            for d in range(Q):
                if d not in T[b][c]:
                    N += 1

print(f"N_16(S_16)={N}")
assert N == 24849
\end{lstlisting}

\subsection{Derived degree tables}
For convenience we record the matrices
\[
O_{16}(u,v):=O_{S_{16}}(u,v),
\qquad
I_{16}(u,v):=I_{S_{16}}(u,v),
\qquad u,v\in[16].
\]
They are listed row by row below.

\begingroup\scriptsize
\setlength{\tabcolsep}{3pt}
\begin{longtable}{c|*{16}{r}}
\caption{The outgoing-degree matrix $O_{16}(u,v)=|T_{u,v}|$.}\label{tab:O16}\\
$u\backslash v$&0&1&2&3&4&5&6&7&8&9&10&11&12&13&14&15\\
\hline
\endfirsthead
\multicolumn{17}{c}{\tablename\ \thetable\ -- continued from previous page}\\
$u\backslash v$&0&1&2&3&4&5&6&7&8&9&10&11&12&13&14&15\\
\hline
\endhead
0&12&12&0&0&0&0&0&0&0&0&12&12&0&0&0&0\\
1&12&12&0&0&0&0&0&0&0&0&12&12&0&0&0&0\\
2&16&16&8&8&16&16&0&0&8&8&16&16&8&8&16&16\\
3&16&16&8&8&16&16&0&0&8&8&16&16&8&8&16&16\\
4&12&12&2&2&10&10&0&0&4&4&12&12&4&4&4&4\\
5&12&12&2&2&10&10&0&0&4&4&12&12&4&4&4&4\\
6&16&16&8&8&16&16&8&8&8&8&16&16&8&8&16&16\\
7&16&16&8&8&16&16&8&8&8&8&16&16&8&8&16&16\\
8&16&16&2&2&16&16&0&0&8&6&16&16&4&4&16&16\\
9&16&16&2&2&16&16&0&0&8&8&16&16&4&4&16&16\\
10&12&12&0&0&0&0&0&0&0&0&12&12&0&0&0&0\\
11&12&12&0&0&0&0&0&0&0&0&12&12&0&0&0&0\\
12&16&16&2&2&16&16&0&0&8&8&16&16&8&8&16&16\\
13&16&16&2&2&16&16&0&0&8&8&16&16&8&8&16&16\\
14&12&12&2&2&10&10&0&0&7&6&12&12&4&4&8&9\\
15&12&12&2&2&10&10&0&0&6&6&12&12&4&4&6&9\\
\end{longtable}
\endgroup

\begingroup\scriptsize
\setlength{\tabcolsep}{3pt}
\begin{longtable}{c|*{16}{r}}
\caption{The incoming-degree matrix $I_{16}(u,v)=\#\{x\in[16]:(x,u,v)\in S_{16}\}$.}\label{tab:I16}\\
$u\backslash v$&0&1&2&3&4&5&6&7&8&9&10&11&12&13&14&15\\
\hline
\endfirsthead
\multicolumn{17}{c}{\tablename\ \thetable\ -- continued from previous page}\\
$u\backslash v$&0&1&2&3&4&5&6&7&8&9&10&11&12&13&14&15\\
\hline
\endhead
0&8&8&16&16&16&16&16&16&16&16&8&8&16&16&16&16\\
1&8&8&16&16&16&16&16&16&16&16&8&8&16&16&16&16\\
2&0&0&4&4&0&0&12&12&4&4&0&0&4&4&0&0\\
3&0&0&4&4&0&0&12&12&4&4&0&0&4&4&0&0\\
4&8&8&12&12&8&8&12&12&12&12&8&8&12&12&12&12\\
5&8&8&12&12&8&8&12&12&12&12&8&8&12&12&12&12\\
6&0&0&2&2&0&0&2&2&2&2&0&0&2&2&0&0\\
7&0&0&2&2&0&0&2&2&2&2&0&0&2&2&0&0\\
8&0&0&12&12&0&0&12&12&8&9&0&0&10&10&0&0\\
9&0&0&12&12&0&0&12&12&7&7&0&0&10&10&0&0\\
10&8&8&16&16&16&16&16&16&16&16&8&8&16&16&16&16\\
11&8&8&16&16&16&16&16&16&16&16&8&8&16&16&16&16\\
12&0&0&12&12&0&0&12&12&6&6&0&0&6&6&0&0\\
13&0&0&12&12&0&0&12&12&6&6&0&0&6&6&0&0\\
14&8&8&12&12&8&8&12&12&9&9&8&8&10&10&8&8\\
15&8&8&12&12&8&8&12&12&10&10&8&8&10&10&10&8\\
\end{longtable}
\endgroup

\subsection{Contribution table and the computation of \texorpdfstring{$N_{16}(S_{16})$}{N16(S16)}}
The counting identity used in the proof of the base lemma is
\[
N_{16}(S_{16})=\sum_{u,v=0}^{15} I_{16}(u,v)\bigl(16-O_{16}(u,v)\bigr).
\]
Define the entrywise contribution matrix
\[
C_{16}(u,v):=I_{16}(u,v)\bigl(16-O_{16}(u,v)\bigr).
\]
Using Tables~\ref{tab:O16} and \ref{tab:I16}, one obtains the following matrix.

\begingroup\scriptsize
\setlength{\tabcolsep}{3pt}
\begin{longtable}{c|*{16}{r}}
\caption{The contribution matrix $C_{16}(u,v)=I_{16}(u,v)(16-O_{16}(u,v))$.}\label{tab:C16}\\
$u\backslash v$&0&1&2&3&4&5&6&7&8&9&10&11&12&13&14&15\\
\hline
\endfirsthead
\multicolumn{17}{c}{\tablename\ \thetable\ -- continued from previous page}\\
$u\backslash v$&0&1&2&3&4&5&6&7&8&9&10&11&12&13&14&15\\
\hline
\endhead
0&32&32&256&256&256&256&256&256&256&256&32&32&256&256&256&256\\
1&32&32&256&256&256&256&256&256&256&256&32&32&256&256&256&256\\
2&0&0&32&32&0&0&192&192&32&32&0&0&32&32&0&0\\
3&0&0&32&32&0&0&192&192&32&32&0&0&32&32&0&0\\
4&32&32&168&168&48&48&192&192&144&144&32&32&144&144&144&144\\
5&32&32&168&168&48&48&192&192&144&144&32&32&144&144&144&144\\
6&0&0&16&16&0&0&16&16&16&16&0&0&16&16&0&0\\
7&0&0&16&16&0&0&16&16&16&16&0&0&16&16&0&0\\
8&0&0&168&168&0&0&192&192&64&90&0&0&120&120&0&0\\
9&0&0&168&168&0&0&192&192&56&56&0&0&120&120&0&0\\
10&32&32&256&256&256&256&256&256&256&256&32&32&256&256&256&256\\
11&32&32&256&256&256&256&256&256&256&256&32&32&256&256&256&256\\
12&0&0&168&168&0&0&192&192&48&48&0&0&48&48&0&0\\
13&0&0&168&168&0&0&192&192&48&48&0&0&48&48&0&0\\
14&32&32&168&168&48&48&192&192&81&90&32&32&120&120&64&56\\
15&32&32&168&168&48&48&192&192&100&100&32&32&120&120&100&56\\
\end{longtable}
\endgroup

Summing each row of $C_{16}$ gives
\[
\begin{aligned}
(r_0,\dots,r_{15})={}&(3200,3200,576,576,1808,1808,128,128,\\
&1114,1072,3200,3200,912,912,1475,1540).
\end{aligned}
\]
Finally,
\[
\begin{aligned}
N_{16}(S_{16})
&=3200+3200+576+576+1808+1808+128+128\\
&\qquad +1114+1072+3200+3200+912+912+1475+1540\\
&=24849.
\end{aligned}
\]

\section{Data and verification for \texorpdfstring{$k=11,13,17$}{k=11,13,17}}\label{app:certificates}

This appendix incorporates the computer-assisted component of Theorems~\ref{thm:k11-binary}, \ref{thm:k13-binary}, and \ref{thm:k17-binary}. In parallel with Appendix~\ref{app:q16}, we present first the defining data format, then the verification protocol and verifier script, and finally the compact certificate data for $k=11$, $k=13$, and $k=17$.

\subsection{Compact orbit-marker certificate data for \texorpdfstring{$k=11,13,17$}{k=11,13,17}}
For a fixed prime length $k\in\{11,13,17\}$, the certificate is a plain-text file with:
\begin{itemize}
\item a one-line header specifying $k$, the target cardinality, the mandatory included vertex $0^k$, and the mandatory excluded vertex $1^k$;
\item one decimal marker for each non-trivial rotation orbit; the marker $n$ denotes the length-$k$ binary word with standard base-$2$ value $n$;
\item ten decimal markers per line, purely for compactness of presentation.
\end{itemize}

The marker convention removes the explicit phase from the earlier orbit-phase format.  If $k=2m+1$ and $w$ is the binary word decoded from a listed marker on its orbit, then the selected vertices from that orbit are
\[
 w,\rho^2(w),\rho^4(w),\dots,\rho^{2m-2}(w).
\]
Equivalently, in the cyclic order of the orbit the marker $w$ is the first selected vertex after the unique double gap.  Thus every listed marker determines the same alternating maximum independent set on its orbit as a suitable phase $t_C$, but without recording $t_C$ separately.

\subsection{Verification protocol}
The verification procedure is finite and explicit:
\begin{enumerate}
\item parse the certificate header and the list of decimal markers;
\item check that every marker has length $k$, is non-constant, and belongs to a distinct non-trivial rotation orbit of size $k$;
\item reconstruct the selected vertices on each orbit by taking every second rotation from the marker;
\item add the mandatory loop-vertex $0^k$ and verify that the total size is the target size;
\item verify directly that no selected vertex is adjacent to another selected vertex in the underlying simple de Bruijn graph;
\item verify that $1^k$ is not selected.
\end{enumerate}

The following script implements exactly this protocol.

\subsection{Verifier script}
\begin{lstlisting}[style=compactscript]
#!/usr/bin/env python3
from __future__ import annotations

import re
import sys
from pathlib import Path

HEADER_RE = re.compile(
    r'#\s*CERTIFICATE\s+k=(\d+)\s+target_size=(\d+)\s+include=([01]+)\s+exclude=([01]+)'
)
INT_RE = re.compile(r'^[0-9]+$')


def rotate(word: str, shift: int = 1) -> str:
    shift %= len(word)
    return word[shift:] + word[:shift]


def parse_certificate(path: Path):
    lines = path.read_text().splitlines()
    if not lines:
        raise ValueError('empty certificate file')
    m = HEADER_RE.fullmatch(lines[0].strip())
    if not m:
        raise ValueError('invalid header in certificate file')
    k = int(m.group(1))
    target = int(m.group(2))
    include_word = m.group(3)
    exclude_word = m.group(4)
    if len(include_word) != k or len(exclude_word) != k:
        raise ValueError('header words have the wrong length')

    markers = []
    seen_raw = set()
    for line in lines[1:]:
        line = line.strip()
        if not line or line.startswith('#'):
            continue
        for token in line.split():
            if not INT_RE.fullmatch(token):
                raise ValueError(f'invalid decimal marker: {token}')
            value = int(token, 10)
            if value < 0 or value >= 2**k:
                raise ValueError(f'decimal marker out of range for k={k}: {token}')
            if token in seen_raw:
                raise ValueError(f'duplicate decimal token: {token}')
            seen_raw.add(token)
            markers.append(format(value, f'0{k}b'))
    return k, target, include_word, exclude_word, markers


def canonical_rotation(word: str) -> str:
    return min(rotate(word, i) for i in range(len(word)))


def orbit_from_word(word: str):
    seen = []
    cur = word
    while cur not in seen:
        seen.append(cur)
        cur = rotate(cur, 1)
    return seen


def neighbors(word: str):
    left = word[1:]
    right = word[:-1]
    out = {left + bit for bit in '01'}
    out.update({bit + right for bit in '01'})
    out.discard(word)
    return out


def verify(path: Path) -> int:
    k, target, include_word, exclude_word, markers = parse_certificate(path)
    expected_orbits = (2**k - 2) // k
    if len(markers) != expected_orbits:
        raise ValueError(
            f'expected {expected_orbits} orbit markers, found {len(markers)}'
        )

    orbit_reps_seen = set()
    selected = {include_word}
    m = (k - 1) // 2

    for marker in markers:
        if marker == '0' * k or marker == '1' * k:
            raise ValueError(f'constant marker is not allowed: {marker}')
        orbit = orbit_from_word(marker)
        if len(orbit) != k:
            raise ValueError(f'nontrivial orbit has wrong size for {marker}')
        rep = canonical_rotation(marker)
        if rep in orbit_reps_seen:
            raise ValueError(f'duplicate rotation orbit: {marker}')
        orbit_reps_seen.add(rep)
        for a in range(m):
            selected.add(rotate(marker, 2 * a))

    if exclude_word in selected:
        raise ValueError(f'excluded word {exclude_word} was selected')
    if len(selected) != target:
        raise ValueError(f'wrong size: expected {target}, found {len(selected)}')

    selected_set = set(selected)
    for word in selected:
        for nb in neighbors(word):
            if nb in selected_set:
                raise ValueError(f'adjacent selected vertices found: {word} ~ {nb}')

    print(f'OK: k={k}, selected={len(selected)}, nontrivial_orbits={len(markers)}')
    return 0


if __name__ == '__main__':
    if len(sys.argv) != 2:
        print(f'usage: {Path(sys.argv[0]).name} CERTIFICATE.txt', file=sys.stderr)
        sys.exit(2)
    sys.exit(verify(Path(sys.argv[1])))
\end{lstlisting}

\subsection{Certificate for \texorpdfstring{$k=11$}{k=11}}
\begin{lstlisting}[style=compactcert]
# CERTIFICATE k=11 target_size=931 include=00000000000 exclude=11111111111
# format: one decimal marker per non-trivial rotation orbit (thirteen entries per line)
# decoding: n represents the length-k binary word with standard base-2 value n
2 6 10 14 288 352 26 1920 1032 1033 42 46 50
864 58 62 264 1120 296 78 1312 1410 360 1922 98 102
1050 1051 1824 1888 122 2016 776 1058 1800 265 706 1062 79
266 778 1066 1802 267 779 1070 1803 1121 202 206 1313 1414
872 1926 226 230 234 238 242 246 250 254 290 354 282
482 294 298 302 1224 866 314 318 1122 1320 1336 1162 1378
1384 1930 1634 362 366 1826 1890 1512 2018 1123 1608 355 410
483 794 1306 1818 283 795 1307 1819 1832 1848 1166 1379 1896
1934 1635 490 494 1992 1891 506 2040 586 590 1610 602 1611
818 1178 1830 1614 489 1615 1202 1321 1340 1226 1194 1834 1738
1198 1994 1203 1626 730 1627 1842 1210 1838 1970 1214 2034 411
1230 1996 1337 1642 1385 1946 874 878 1646 1513 1947 1852 1338
1850 1742 1339 1998 491 1341 495 1274 1854 507 1278 2035 1386
1962 1390 1898 1530 1387 1754 1391 1966 1978 2010 2042 1755 1902
2011 1531 1982 2046
\end{lstlisting}

\subsection{Certificate for \texorpdfstring{$k=13$}{k=13}}
\begin{lstlisting}[style=compactcert]
# CERTIFICATE k=13 target_size=3781 include=0000000000000 exclude=1111111111111
# format: one decimal marker per non-trivial rotation orbit (thirteen entries per line)
# decoding: n represents the length-k binary word with standard base-2 value n
2 6144 10 14 1152 6146 26 6147 1026 1216 5122 7170 1027
27 5123 7936 66 6152 74 78 5248 6154 1440 6155 392 51
4122 7174 7296 59 488 63 130 6160 138 142 146 6162 154
6163 1034 4866 5130 7178 1035 1456 5131 7938 4144 1584 4146 7180
5249 6170 3488 6171 4152 7360 5134 7182 4156 1968 5135 7939 1544
266 270 1154 5640 282 1922 1160 2434 4170 4171 1224 5641 314
318 4226 1546 330 5344 4618 5642 5536 6018 6274 1547 362 1464
7298 5643 7810 8066 1548 394 398 6432 6194 410 1923 1050 3098
4202 4203 1051 3504 4206 4207 7200 1550 1832 7392 4622 6202 7584
6019 1928 1551 490 494 7968 6206 8096 8160 2114 6210 538 6211
4360 1218 5154 7202 1602 283 5155 7203 1164 4242 4243 2122 6218
1441 7698 3138 3266 4250 4251 2126 6222 2536 8068 4290 5160 1802
4674 331 3338 5058 4362 4874 5162 7210 5698 5552 7434 7211 6338
5164 1803 5776 363 3339 5872 4363 7362 5166 7214 7746 6064 7435
7215 7216 2146 395 4294 399 1074 6448 5170 7218 1603 411 5171
7219 1562 842 3384 2154 5658 5537 7706 6278 1563 874 3512 2158
5659 7814 8070 5176 7224 4675 1836 5177 7408 1082 4878 5178 7226
5699 7600 5179 7227 6339 4338 7228 2170 6266 986 6267 1086 7363
5182 7230 7747 8112 5183 7231 1122 1098 1102 4642 5666 5768 6024
1634 1130 1134 7304 5667 1146 8072 4386 4387 4680 6290 1178 2530
3146 4394 4395 1099 1457 4398 4399 3170 1226 1230 4646 6298 3490
6025 3150 4410 7246 1103 2540 4414 7247 5672 5224 1930 4706 1322
1326 5320 3466 1338 1342 5218 1354 5432 4650 5514 5770 6026 5730
1386 5560 7306 5675 6050 8074 5688 6434 5676 1434 6626 5784 4458
4459 1458 3467 5864 5880 7266 1482 5944 4654 5515 7586 6027 7778
1514 6072 7970 5679 8098 8162 1123 795 5219 7960 1610 6456 4658
6346 5772 6347 1638 1642 1646 7308 6350 6632 6351 7272 2330 843
5225 3866 4890 5226 7274 1131 5553 5227 7962 5228 7276 5777 875
5229 3516 4891 5230 7278 1135 3564 5231 7963 1934 4707 1834 1838
7368 5689 1850 1854 7464 5347 4666 6378 5774 7738 5731 1898 1902
7310 6382 6051 8078 7752 6386 1946 6627 3194 5242 7290 1147 987
5243 7291 7976 7992 4670 5694 2010 7742 7779 2026 2030 8136 5695
8168 8184 4684 4682 7314 1179 4686 7315 1321 5348 6442 5778 7754
1611 1449 1465 6446 7826 8082 6450 2458 7756 5321 5274 7322 3506
5275 7323 1833 7396 6458 5779 7758 6460 2538 2542 6462 7827 8083
1323 1325 1327 3274 3402 3530 1339 3914 1343 1355 4938 5066 5322
5450 5578 5554 7466 7978 6602 1387 5842 5874 7370 7498 7626 6066
8010 8138 1483 1435 7468 7980 1837 1451 1453 1455 1459 3403 3531
5868 7122 5884 4939 7410 5323 5451 5579 7602 7470 7982 6603 1515
6987 6076 7986 7499 7627 8114 8011 8139 3278 6554 5785 6555 4922
7374 6636 4926 8140 3482 3386 3390 5433 5530 6762 7786 3434 5561
7578 7834 7787 1947 4970 4971 3483 3514 3518 5945 5531 6766 7790
3562 3566 7579 8102 7791 1851 5351 1855 5070 5354 5946 5555 7482
6094 1899 5843 1903 5358 5947 6067 7483 8142 7996 5754 6778 7802
5082 5083 5755 7838 7803 5438 5950 2011 5371 6095 2027 6991 2031
5439 5951 8172 5375 8143 5546 5610 5562 5994 6122 5550 5850 5882
5547 6059 6074 8042 8170 3435 5551 7034 5563 7130 5886 5611 5998
5995 6123 7019 6078 6063 6126 8043 8186 5851 7023 8046 7035 5999
8047 7662 8174 7663 6127 8190
\end{lstlisting}

\subsection{Certificate for \texorpdfstring{$k=17$}{k=17}}
\begin{lstlisting}[style=compactcert]
# CERTIFICATE k=17 target_size=61681 include=00000000000000000 exclude=11111111111111111
# format: one decimal marker per non-trivial rotation orbit (thirteen entries per line)
# decoding: n represents the length-k binary word with standard base-2 value n
2 1536 10 14 4608 22528 26624 122880 65544 65545 65546 65547 50
54 65550 65551 67584 24578 75776 78 83968 88064 92160 96256 98 102
106 110 116736 120832 124928 129024 130 134 138 142 146 150 154
122884 162 166 170 174 65580 65581 186 190 194 24582 202 206
210 214 218 96257 65592 65593 65594 114702 65596 65597 65598 114703 1032
6146 4256 270 18434 22530 26626 30722 4640 1176 298 4832 51202 55298
314 318 67586 24586 21120 334 83970 88066 92162 96258 354 22912 5792
5856 116738 120834 124930 129026 1544 6147 394 398 18435 22531 410 30723
6688 27008 426 1720 27776 28032 442 446 115200 24590 75779 29568 83971
120320 121344 96259 123392 124416 125440 126464 127488 128512 129536 130560 1026 3074
5122 7170 9218 68352 4304 271 69640 77832 21506 94216 25602 27650 29698
126984 33794 74496 37890 39938 76032 299 77056 1212 50178 77833 54274 94217
313 98382 317 98383 4106 68610 20490 28682 74754 76802 78850 335 69642
84994 87042 94218 91138 93186 118794 126986 4107 355 103426 105474 107522 109570
111618 5872 69643 117762 86027 94219 123906 110603 118795 126987 1027 1548 5123
7171 393 98402 397 98403 69644 77836 21507 94220 409 411 29699 126988
26688 107264 37891 39939 425 427 1716 1724 50179 77837 54275 94221 441
443 445 447 4110 115456 20494 28686 74755 117504 78851 118528 69646 84995
87043 114746 91139 93187 118798 126990 4111 123648 103427 105475 107523 109571 111619
126720 69647 117763 86031 114750 123907 110607 118799 126991 1538 4136 4152 4610
22536 26632 30728 4232 4248 1066 4280 4296 4312 65806 65807 16898 24610
18946 4408 20994 22018 23042 96264 4488 72064 18080 18144 29186 120840 31234
129032 1154 34306 1162 1166 74880 75136 1178 40450 1186 4760 1194 1198
4808 4824 1210 4856 19488 24614 78464 19680 53762 54786 55810 96265 16462
65849 65850 65851 16463 65853 65854 65855 5128 67074 5160 5176 18442 22538
26634 30730 21024 5272 21152 21216 78338 79362 1338 1342 67594 71690 86656
5432 86530 88074 88578 122922 22048 91650 88704 5560 94722 95746 124938 97794
5640 6155 1418 1422 102914 103938 5736 30731 23072 92544 23200 23264 93312
93568 5864 5880 115202 71691 117250 95104 119298 120322 121346 122926 123394 96640
125442 126466 127490 128514 129538 130562 24672 65922 65923 99456 22540 26636 30732
1570 1574 65930 65931 65932 65933 65934 65935 25632 71692 6440 25824 20995
22019 23043 122930 1634 1638 1642 1646 29187 120844 31235 129036 26656 106880
6696 6712 37379 107904 6760 40451 1698 1702 65962 65963 1714 1718 65966
65967 27680 71693 27808 27872 6984 112000 28064 122934 1762 1766 1770 1774
1778 1782 1786 1790 114816 67075 28832 28896 18446 22542 26638 30734 116864
117120 117376 117632 78339 79363 29600 7416 67598 24634 119424 119680 86531 88078
88579 122938 30240 121216 30368 121728 94723 95747 124942 97795 1922 6159 123520
30944 102915 103939 124544 30735 31264 125312 125568 31456 126080 126336 126592 7928
115203 24638 117251 127872 128128 128384 128640 122942 129152 129408 129664 129920 130176
130432 130688 130944 1033 4140 16592 4156 69664 49282 21512 23560 6402 27656
7426 127008 16912 4236 9474 114820 4260 1067 4276 4284 50184 49286 54280
56328 1081 98574 1085 98575 66568 17154 20514 114824 18690 76808 53282 4412
69666 49290 86050 89096 91144 93192 118818 97288 99336 4492 103432 114828 26882
27394 27906 18160 115720 49294 119816 121864 30978 125960 128008 130056 1155 5129
28708 4644 1163 18640 1167 69668 77860 21513 23561 1177 1179 40194 127012
75840 1187 37897 28709 1193 98602 1197 98603 50185 77861 54281 56329 4836
1211 1213 4860 66569 49922 20518 114840 51458 51970 53286 19696 16538 49306
86054 89097 55554 93193 118822 97289 8270 24654 33081 40160 33082 98618 33083
98619 8271 124420 33085 57423 33086 98622 33087 98623 3082 5130 7178 9226
5164 20688 5180 17418 19466 21514 94248 25610 27658 29706 127016 33802 74498
37898 39946 76034 76546 21200 21232 50186 77865 54282 94249 1337 1339 1341
1343 4138 82690 70666 28714 84226 84738 78858 5436 69674 77866 86058 94250
91146 110634 118826 97290 99338 90882 103434 28715 92418 109578 111626 22256 115722
77867 119818 121866 123914 125962 128010 130058 5644 5131 7179 1417 98658 1421
98659 17419 19467 21515 94252 22928 22960 29707 127020 92224 107266 37899 39947
23184 23216 23248 23280 50187 77869 54283 94253 5860 5868 5876 5884 4142
115458 70667 28718 116994 117506 78859 118530 69678 77870 86062 94254 91147 110638
118830 97291 123138 123650 124162 28719 107531 109579 111627 126722 115723 77871 119819
121867 123915 125963 128011 130059 3086 33154 98690 33155 98691 16578 49346 82114
114882 6403 27660 7427 114883 67608 24674 18950 3150 73826 90210 106594 122978
3170 72065 3178 3182 73827 90211 106595 122979 16584 25648 82120 114888 18691
76812 53298 25840 16586 49354 82122 114890 91148 110642 118834 127026 3266 24678
3274 3278 33178 98714 33179 98715 16590 49358 82126 114894 30979 125964 128012
127027 26672 82128 114896 6692 6700 26832 26864 16594 49362 82130 114898 27024
27056 40195 114899 67610 24682 86657 3406 73834 98730 33195 122986 3426 91654
3434 3438 73835 98734 33199 129050 16600 27696 82136 114904 111168 111296 53302
27888 16602 49370 82138 114906 55555 110646 118838 114907 67611 24686 75803 56544
33210 98746 33211 122990 123398 96641 125446 112667 33214 98750 33215 130566 5134
114912 9230 115392 115520 115648 69688 49378 86072 94264 25614 27662 29710 127032
33806 116928 20537 114916 76035 117440 117568 117696 69689 49382 86073 94265 29584
7404 118592 29680 4154 82691 20538 114920 84227 84739 53306 119744 69690 49386
82154 94266 91150 110650 118842 97294 99342 121024 103438 114924 92419 109582 111630
121792 69691 49390 82158 121870 123918 125966 128014 127035 1923 5135 114928 123456
123584 123712 7740 69692 49394 82162 94268 124480 124608 29711 127036 106755 125120
20541 114932 125504 125632 125760 31472 69693 49398 82166 94269 126528 126656 126784
7932 4158 127168 20542 114936 127552 127680 53310 127936 16634 49402 82170 94270
91151 110654 118846 97295 123139 129216 129344 114940 107535 109583 111631 129984 16638
49406 82174 121871 123919 125967 128015 127039 4226 4482 4738 4174 21000 5506
5762 96288 6274 6530 6786 7042 7298 7554 7810 129056 8578 66594 66595
16968 16984 4250 10114 4258 17048 4266 4270 4274 4278 4282 4286 4290
12674 4298 4302 4306 13698 4314 96289 4322 4326 66618 66619 66620 66621
4346 4350 67080 17026 17464 17538 17794 26658 18306 17544 70240 70304 70368
78344 19842 4410 17656 20610 20866 21122 70880 86536 21890 88584 123018 71200
22914 71328 71392 116770 120866 124962 97800 24962 66658 66659 25730 25986 4506
105992 18056 27010 4522 18104 27778 28034 18152 18168 28802 29058 29314 72928
119304 120328 121352 123022 123400 124424 125448 126472 127496 128520 129544 130568 4618
4622 33922 34178 26660 34690 4642 4646 66698 66699 18632 18648 4666 4670
4674 37250 18728 18744 38018 38274 38530 123026 66712 66713 4714 4718 29193
30217 31241 129060 18968 4746 4750 42114 19032 4762 40457 66728 66729 66730
66731 66732 66733 4794 4798 4802 45442 4810 19256 4818 46466 4826 123030
4834 4838 4842 4846 4850 4854 66750 66751 49538 19496 78048 50306 71177
26662 73225 78368 51586 4906 78560 52354 79369 19688 19704 82441 53634 53890
79072 86537 87561 88585 123034 55426 55682 79520 79584 116774 120870 124966 97801
12366 20558 115000 36942 49465 82233 61518 69710 49466 82234 94286 16699 49467
82235 115003 4175 12367 20559 115004 36943 49469 82237 126724 69711 49470 82238
94287 16703 49471 82239 115007 5642 6666 67458 5154 82528 66826 66827 82720
20696 5178 20728 16906 17930 18954 83168 21002 71042 23050 123042 20872 72066
83616 83680 29194 30218 31242 129064 74114 21032 21048 21064 75138 21096 40458
21128 21144 5290 21176 76930 21208 21224 21240 21256 50698 85152 85216 78978
54794 55818 123046 5346 5350 5354 5358 5362 5366 5370 5374 82306 86176
86240 83074 83330 72202 73226 86560 86624 86688 86752 85122 79370 5434 5438
82442 83466 86658 87264 86538 87562 92202 123050 88194 91658 21928 87776 116778
120874 124970 89986 90498 22056 22072 91266 103946 88480 92034 88608 92546 88736
88800 93314 93570 22248 22264 115210 116234 117258 89312 119306 95618 95874 123054
123402 96642 96898 97154 127498 97666 97922 130570 5646 99458 99714 6667 7691
16738 66953 66954 66955 16739 66957 66958 66959 5698 102786 91296 91360 103554
22027 23051 123058 5730 22936 22952 22968 29195 30219 31243 129068 106882 5770
92384 107650 107906 92576 40459 5794 23192 23208 23224 23240 23256 23272 23288
93216 50699 93344 93408 93472 112002 93600 123062 67000 67001 5866 5870 5874
5878 67006 67007 67083 115330 94432 115842 71179 72203 73227 94752 117122 94880
94944 78347 79371 95136 95200 118914 83467 119426 95456 86539 87563 92206 123066
95776 121218 95904 95968 116782 120878 124974 97803 99851 96416 96480 102923 103947
96672 105995 96800 125314 96928 96992 126082 126338 97184 97248 127106 116235 117259
97504 128130 128386 128642 123070 97824 129410 97952 98016 130178 130434 130690 130946
67084 49539 82307 67087 4227 4483 4739 24888 21004 5507 5763 96304 24968
24984 25000 25016 7299 7555 7811 129072 6278 6282 6286 67108 67109 6298
10115 6306 6310 67114 67115 67116 67117 6330 6334 6338 12675 6346 6350
6354 101728 6362 56844 6370 6374 6378 6382 6386 6390 6394 6398 25640
25656 17539 17795 72204 18307 25736 25752 6442 25784 78348 19843 25832 25848
20611 20867 103584 6478 86540 21891 92210 96306 25992 26008 26024 26040 116786
120882 124978 97804 24963 67170 67171 6546 26200 6554 105996 67176 67177 6570
6574 6578 6582 6586 6590 28803 29059 29315 105696 119308 120332 121356 122380
123404 124428 125452 126476 127500 128524 129548 130572 26680 26696 34179 26676 34691
67208 67209 6698 6702 26824 26840 26856 26872 6722 37251 26920 26936 26952
107872 38531 96308 27016 27032 27048 27064 29197 30221 31245 129076 6790 67234
67235 6802 6806 6810 40461 6818 6822 6826 6830 6834 6838 6842 6846
6850 109664 6858 6862 6866 109920 6874 56845 6882 6886 6890 6894 6898
6902 6906 6910 6922 27704 50307 71181 72205 73229 27784 27800 27816 27832
52355 79373 27880 27896 82445 111712 27944 27960 86541 87565 92214 96310 28040
28056 28072 28088 116790 120886 124982 97805 7046 7050 7054 7058 7062 7066
7070 7074 7078 7082 7086 7090 7094 7098 7102 7106 7110 7114 7118
7122 7126 7130 7134 7138 7142 7146 7150 7154 7158 7162 7166 67459
28808 28824 115360 115424 28872 115552 115616 115680 16910 17934 18958 115936 21006
71043 23054 96312 116256 72067 116384 116448 29198 30222 31246 129080 74115 116896
116960 117024 117088 117152 40462 117280 117344 117408 117472 117536 117600 117664 117728
29448 50702 117920 117984 78979 54798 55822 96313 29576 118368 7402 7406 118560
118624 29672 7422 118944 119008 83075 83331 26682 73230 29832 119392 119456 119520
85123 79374 119712 119776 82446 83470 86659 120032 84026 87566 92218 89614 120352
91662 120480 120544 116794 120890 124986 89987 90499 120992 121056 91267 103950 121248
92035 121376 121440 121504 121568 121632 121696 121760 121824 115214 116238 117262 122080
119310 95619 95875 122382 123406 122464 96899 97155 127502 97667 97923 130574 123168
99715 26684 7695 123424 123488 123552 123616 123680 123744 7738 7742 30984 102787
124064 124128 124192 22031 23055 96316 124448 124512 124576 124640 29199 30223 31247
129084 125024 7818 125152 125216 125280 125344 40463 125472 125536 125600 125664 125728
125792 7866 7870 125984 50703 126112 126176 126240 126304 126368 96317 126496 126560
126624 126688 126752 126816 7930 7934 127136 127200 115843 71183 26686 73231 127520
127584 127648 127712 78351 79375 127904 127968 128032 83471 128160 128224 84030 87567
92222 89615 128544 128608 128672 128736 116798 120894 124990 97807 99855 129184 129248
102927 103951 129440 105999 129568 129632 129696 129760 129824 129888 129952 130016 130080
116239 117263 130272 130336 130400 130464 122383 130592 130656 130720 130784 130848 130912
130976 131040 67650 115216 33826 4235 26690 4239 16914 37640 67658 115218 33830
4251 67662 127108 99368 75842 8526 73994 99370 92226 99371 8546 99372 5793
8558 73995 99374 106763 99375 24844 8586 8590 73996 99378 8602 123148 6689
24845 67690 67691 73997 99382 8634 8638 99384 75843 29572 73998 90382 121360
123150 123408 99388 125456 126480 73999 90383 106767 123151 16962 17090 69840 17468
17442 70408 82466 94344 17986 27682 29730 127112 18626 37922 28809 19010 19138
19266 70384 50210 52258 82470 94345 4409 4411 4413 17660 20674 70690 28810
21058 21186 78882 70896 69770 85026 86154 115242 88328 88840 118922 127114 22722
103458 28811 92424 23234 53387 71408 115746 117794 86155 115246 123938 97032 128034
130082 5155 67779 33890 99426 33891 99427 17443 103176 86156 115250 72080 4507
29731 127116 26818 26946 115252 18084 4523 72400 18108 50211 27842 86157 115254
18148 18156 18164 18172 28866 70691 115256 29250 117512 78883 118536 69774 119560
82490 115258 121096 121608 118926 127118 123656 124168 115260 125192 125704 53391 126728
115747 127752 82494 115262 129288 129800 130312 130824 19492 82498 67851 25636 27684
82499 67855 99464 18962 37176 33930 99466 92232 123170 16966 18636 67866 67867
33934 99470 125000 129096 4675 82504 115272 74896 18732 82505 18748 16970 21257
82506 115274 38466 93220 82507 115275 99480 78468 19681 33946 99482 55826 99483
16974 29449 67898 115278 30985 125988 67902 115279 67906 115280 33954 99490 26698
99491 16978 37641 67914 115282 4761 4763 67918 115283 24874 84498 21729 74026
99498 92234 123178 9570 24875 88708 9582 74027 99502 125002 129098 4803 82520
115288 4809 4811 82521 19260 9634 4819 67946 115290 33974 4827 67950 115291
99512 75851 95108 33978 99514 121362 123182 123410 99516 96900 126482 33982 99518
125003 129099 19500 78032 78064 69784 70409 82530 115298 71945 27686 118936 127128
51394 82532 28825 78480 4907 78544 78576 69785 52262 82534 115302 19684 19692
19700 19708 53442 82536 115304 53826 53954 53402 79088 17002 85030 82538 115306
88329 88841 118938 127130 55490 82540 115308 92425 56002 53403 79600 17006 117798
82542 115310 123942 97033 128038 130086 28833 28897 74040 99554 26702 123192 116868
24889 117380 117636 74041 99558 40168 40184 99560 84499 119684 34026 99562 92238
96334 30241 99564 30369 121732 34030 99566 106811 123195 99568 123524 40504 34034
90428 124548 123196 31265 99572 125572 40632 34038 90429 126596 40696 99576 75855
127876 34042 90430 121363 96335 129156 99580 129668 129924 34046 90431 106815 123199
34058 99594 34059 99595 17030 68802 86177 115334 5177 5179 5181 20732 69826
17674 28834 70210 19210 53410 83184 69794 85032 86178 115338 22794 93224 95272
97320 20876 103464 28835 26890 27402 27914 83696 69795 117800 86179 115342 123944
125992 128040 130088 28836 21028 21036 84176 21052 17042 37642 86180 115346 5273
21100 118948 127140 21132 76098 28837 21156 5291 21172 21180 17046 45834 86181
115350 21220 21228 21236 21244 78018 50442 115352 78402 51978 53414 85232 17050
54026 82586 115354 55562 56074 95273 97321 99640 41294 40161 34106 90446 34107
99643 100435 99644 125460 112723 34110 90447 34111 99647 82626 86224 21564 17450
19498 82594 94376 71946 27690 118952 127144 84162 37930 115364 84546 76554 86736
86768 69801 52266 82598 94377 5433 5435 5437 5439 82698 70698 115368 84234
84746 53418 87280 69802 85034 82602 89130 91178 88842 95274 97322 90890 91402
115372 107562 92938 111658 87792 115754 94986 82606 121898 123946 97034 128042 98058
115376 22052 22060 88272 22076 17451 103178 82610 94380 88464 88496 118956 127148
107274 107786 115380 88720 88752 88784 88816 69805 111370 82614 94381 5561 22252
22260 22268 115466 70699 115384 117002 94914 53422 118538 69806 119562 82618 89131
121098 121610 95275 97323 123658 124170 115388 96834 96962 126218 126730 115755 127754
82622 121899 129290 129802 130314 130826 115394 25644 6923 118960 127152 24930 41314
11342 74082 99722 34187 99723 11362 24931 11370 11374 74083 99726 34191 99727
5699 17675 115400 102978 19211 53426 91376 69810 85036 82634 94386 22795 93228
118962 127154 5731 103468 115404 5737 22956 22964 22972 69811 117804 82638 94387
123948 125996 128044 130092 115408 23076 5771 92368 92400 69812 37643 82642 115410
92560 92592 118964 127156 5795 23188 115412 23204 23212 23220 23228 69813 23244
82646 115414 23268 23276 23284 23292 93232 50443 115416 111170 93360 53430 93424
17114 54027 82650 115418 55563 56075 118966 127158 99768 41326 56545 34234 90478
34235 99771 123414 99772 96901 126486 34238 90479 34239 99775 115522 94448 17122
19502 82658 94392 71947 27694 118968 127160 116930 37934 115428 76043 117442 117570
94960 17126 52270 82662 94393 95120 95152 118594 95216 82699 70702 115432 84235
84747 53434 95472 17130 85038 82666 94394 91182 88843 95278 97326 121026 91403
115436 107566 92939 111662 95984 17134 94987 82670 94395 123950 97035 128046 98059
115440 123458 96432 123714 96496 17138 103179 82674 115442 96656 96688 118972 127164
125122 107787 115444 96912 96944 125762 97008 17142 111371 82678 115446 126530 126658
126786 97264 127170 70703 115448 127554 127682 53438 97520 17146 119563 82682 94398
121099 121611 95279 97327 129218 129346 115452 125195 125707 126219 98032 17150 127755
82686 94399 129291 129803 130315 130827 24972 20675 28867 6723 25004 27916 25020
69827 117808 82702 115470 123952 126000 82703 130096 50232 74120 90504 26722 123272
50312 70241 70305 70369 74121 90505 12602 12606 24970 84504 70881 74122 99882
106890 96354 71201 24971 71329 71393 74123 99886 125026 123275 12682 12686 74124
99890 72097 123276 50824 108056 12714 12718 74125 99894 12730 12734 24974 117272
72929 34362 99898 106894 96355 123416 24975 125464 126488 34366 99902 129560 123279
25652 25660 69832 78024 82722 115490 17987 27698 118984 127176 25740 82724 28873
6441 6443 103120 25788 69833 52274 82726 115494 25828 25836 25844 25852 82728
115496 21059 21187 53450 6479 17194 78026 86218 94410 91186 93234 118986 127178
25996 82732 115500 107570 26028 111666 26044 17198 117810 86219 94411 96524 126002
118987 130098 68803 74136 99938 26726 123288 78369 51590 13098 13102 74137 99942
13114 13118 24986 84505 79073 34410 99946 106906 123290 55430 24987 79521 52664
34414 99950 125030 123291 82744 115512 29251 117516 53454 118540 17210 78030 86222
115514 121100 121612 82747 127182 123660 82748 115516 125196 125708 126220 126732 17214
117811 86223 115518 129292 129804 82751 130099 110800 118992 127184 99976 6693 83169
34442 99978 34443 99979 69841 26828 86225 115526 6713 26860 26868 26876 20690
115528 37443 26924 53458 26940 17226 78034 82762 94418 38467 110802 118994 127186
27020 20691 115532 6761 27052 27060 27068 17230 78035 82766 94419 123956 110803
118995 127187 86241 74152 90536 26730 123304 86561 54424 54440 54456 74153 55402
13626 13630 71786 75882 87265 34474 90538 106922 123306 88198 104554 54696 54712
34478 120938 125034 129130 68962 68963 34482 90540 54888 123308 54920 92550 13738
13742 34486 55403 13754 13758 71787 117274 89313 34490 90542 106926 123310 123418
124442 125466 126490 34494 120939 129562 130586 27708 69848 78040 82786 94424 71949
27702 119000 127192 27788 37942 115556 27812 27820 27828 27836 69849 52278 82790
94425 27876 27884 27892 27900 20698 115560 6985 27948 53466 27964 17258 78042
82794 115562 91190 93238 119002 127194 28044 20699 115564 107574 28076 111670 28092
17262 78043 82798 115566 96525 126006 119003 127195 94433 34530 100066 26734 123320
94753 117126 94881 94945 34534 100070 14138 14142 71790 75886 95457 34538 100074
106938 123322 95777 104558 95905 95969 34542 100078 125038 129134 96417 96481 34546
100082 96673 123324 96801 108059 96929 96993 34550 100086 97185 97249 71791 117275
97505 34554 100090 106942 123326 97825 124443 97953 98017 34558 100094 130694 130587
115600 115632 115664 115696 20706 115592 70211 19214 53474 115952 83000 85048 87096
89144 22798 93240 95288 97336 116272 25870 115596 26894 27406 27918 116464 115768
117816 119864 121912 123960 126008 128056 130104 29220 116912 116944 116976 17465 78052
86244 94436 117136 117168 119012 127204 117296 76099 115604 117392 117424 117456 117488
50233 45838 86245 94437 117648 117680 117712 117744 20710 115608 78403 51982 53478
118000 83001 54030 87097 89145 55566 56078 95289 97337 29580 29588 115612 29604
7403 7405 7407 118544 118576 118608 118640 7417 29676 29684 7423 119024 17466
78056 86248 94440 71950 72462 119016 127208 119344 37946 115620 84547 76558 119504
119536 69865 52282 86249 94441 119696 119728 119760 119792 20714 115624 84238 84750
53482 120048 69866 86798 86250 94442 91194 88846 95290 97338 90894 103482 115628
107578 92942 111674 120560 115770 94990 119866 121914 96526 97038 128058 98062 120976
121008 121040 121072 17467 78060 86252 94444 121232 121264 119020 127212 107278 107790
115636 30372 121520 121552 121584 69869 111374 86253 94445 121744 121776 121808 121840
20718 115640 117006 94915 53486 118542 69870 119566 86254 94446 121102 121614 95291
97339 123662 124174 115644 96835 96963 126222 126734 115771 127758 119867 121915 129294
129806 128059 130830 127216 123440 123472 115652 123536 123568 123600 123632 69873 123696
54332 94449 30948 7739 7741 7743 20722 115656 102979 19215 53490 124144 69874
85052 87100 89148 22799 93244 95292 97340 124464 25871 115660 124560 124592 124624
124656 115772 117820 119868 121916 123964 126012 128060 130108 31268 7819 125136 125168
17469 78068 21565 94452 125328 125360 119028 127220 125488 125520 115668 125584 125616
125648 125680 69877 125744 54333 94453 31460 7867 31476 7871 20726 115672 126096
126128 53494 126192 69878 54031 87101 89149 55567 56079 95293 97341 126512 126544
115676 126608 126640 126672 126704 126736 113859 126800 126832 7929 7931 7933 7935
17470 78072 86264 94456 71951 72463 119032 127224 127536 37950 115684 76047 127664
127696 127728 50238 52286 86265 94457 127888 127920 127952 127984 20730 115688 84239
84751 53498 128240 69882 86799 87102 94458 91198 88847 95294 97342 128560 103486
115692 107582 92943 111678 128752 115774 94991 119870 121918 96527 97039 128062 98063
129168 129200 129232 129264 17471 78076 86268 94460 129424 129456 119036 127228 129584
107791 115700 129680 129712 129744 129776 50239 111375 86269 94461 129936 129968 130000
130032 20734 115704 130192 130224 53502 130288 69886 119567 87103 94462 121103 121615
95295 97343 130608 130640 115708 125199 125711 126223 130800 115775 127759 119871 121919
129295 129807 128063 130831 17506 18978 69944 21026 71048 71304 96392 26146 70056
70072 29218 120968 73352 129160 17546 70200 74888 18786 17562 123428 19042 70312
19170 19234 19298 17594 17598 78216 78472 19682 78984 79240 79496 96393 69945
17642 17646 17650 17654 17658 17662 70178 22666 83592 123432 21090 21154 70840
78370 55434 17722 70904 86408 86664 21730 86562 88202 87688 123434 88456 88712
71096 89224 89480 89736 97826 22690 22754 22818 91528 22946 123436 92552 71336
71352 93320 93576 17850 17854 94600 94856 95112 95368 95624 95880 123438 96648
96904 97160 97416 97672 97928 98184 50274 70026 70027 17507 50275 70030 70031
25698 71976 71992 103560 103816 104072 123442 26210 18026 18030 29219 120972 106120
129164 18058 26850 26914 26978 27042 123444 70057 18090 18094 18098 18102 18106
18110 27746 72488 72504 27938 28002 28066 123446 70073 70074 70075 70076 70077
18170 18174 115848 22670 116360 123448 117128 117384 117640 78371 55438 72936 72952
119176 119432 119688 86563 88206 120456 96398 121224 30370 121736 121992 122248 122504
97827 123528 73272 124040 124296 124552 123452 125320 125576 73400 126088 126344 126600
73464 127368 127624 127880 128136 128392 128648 96399 129416 129672 129928 130184 130440
130696 130952 17545 50313 70182 37180 50314 70186 115850 17547 50315 70190 115851
35938 18634 18638 18642 36194 18650 123462 70201 18666 18670 18674 18678 18682
18686 71204 18057 123464 74904 18730 18734 78372 55442 70222 70223 20873 75048
75064 86564 88210 88612 123466 75160 75176 75192 94756 95780 96804 97828 83096
115864 70244 70245 83097 19697 50330 83098 115866 17563 50331 83099 115867 116260
117284 40162 29833 120356 121380 123470 124452 125476 126500 127524 128548 129572 130596
18982 70282 70283 70284 70285 19002 19006 76056 19018 19022 76104 42338 38537
123474 19046 19050 19054 29221 120980 31269 129172 83112 115880 70308 70309 83113
38204 70313 70314 115882 70316 70317 70318 115883 76568 19146 19150 19154 19158
19162 123478 19174 19178 19182 19186 19190 19194 19198 71205 50825 123480 19238
19242 19246 78373 55446 70350 70351 19270 19274 19278 86565 88214 54921 123482
19302 19306 19310 94757 95781 96805 97829 83128 115896 70372 70373 83129 95172
70377 83130 115898 17595 70381 83131 115899 70385 83132 115900 70388 70389 83133
126738 50366 83134 115902 17599 50367 83135 115903 50402 84120 71049 71305 96408
26150 78248 50914 116888 120984 73353 129176 78376 51426 51490 78424 78440 123492
78488 19626 19630 78536 78552 78568 78584 78217 78632 78648 78985 79241 79497
96409 19686 19690 19694 19698 19702 19706 19710 22682 83593 123496 79000 79016
79032 51354 55450 79080 79096 86409 86665 79160 84122 88218 87689 123498 88457
79272 79288 89225 89481 89737 129178 79400 55522 79432 55650 79464 123500 79512
79528 79544 79560 79576 79592 79608 94601 94857 95113 95369 95625 95881 123502
96649 96905 97161 97417 97673 97929 98185 70538 94520 25678 27726 70542 127288
12601 70546 115940 42062 44110 70550 117700 50406 70554 94521 40164 40172 70558
29681 70562 115944 37178 45370 70566 119748 50410 70570 94522 102714 110906 70574
127290 12603 70578 115948 37179 45371 70582 121796 50414 70586 94523 102715 110907
70590 127291 123588 83185 40508 50418 70602 94524 124484 124612 70606 127292 12605
70610 115956 125508 125636 70614 31473 50422 70618 94525 58447 60495 70622 40700
83192 115960 37182 45374 83193 127940 50426 70634 94526 102718 110910 70638 127294
12607 70642 115964 37183 45375 70646 129988 50430 70650 94527 102719 110911 70654
127295 83496 70818 83256 86568 87592 88616 89640 22922 23178 83384 94760 95784
96808 97832 83512 25738 25994 72098 123532 72290 83624 72418 72482 28042 83688
83704 116264 117288 72930 119336 120360 121384 122408 124456 125480 126504 127528 128552
129576 130600 70794 70795 84168 84184 70798 70799 37258 21066 84280 75042 38282
38538 96420 21094 21098 21102 116900 120996 31273 129188 70819 76066 84568 21146
123540 21158 21162 21166 21170 84696 21178 21182 45450 84776 84792 21202 46474
21210 96421 70841 70842 70843 21234 21238 70846 70847 22694 72233 123544 51594
85160 85176 78377 79401 85224 85240 53642 53898 85304 86569 87593 88617 89641
55690 55946 85432 94761 95785 96809 97833 116024 70884 70885 83257 118548 70889
70890 116026 17723 70893 70894 116027 70897 70898 116028 70900 70901 70902 61775
50494 70906 116030 70908 50495 70910 116031 22058 23082 96424 26154 86440 86456
116904 121000 73354 129192 84194 84258 75146 86632 123556 86680 86696 86712 84770
86744 86760 86776 50730 86824 86840 53802 54826 55850 96425 21734 70970 70971
70972 70973 70974 70975 22698 72234 123560 87192 87208 87224 51370 55466 87272
87288 86410 84522 87352 84138 88234 87690 96426 91690 87464 87480 94762 95786
96810 97834 88290 91274 88418 87656 123564 92554 87720 87736 93322 88930 87784
87800 94602 117290 95114 119338 120362 121386 122410 96650 96906 97162 97418 97674
97930 130602 22062 88264 88280 22074 22078 102794 88360 88376 103562 103818 104074
96428 88472 88488 88504 116908 121004 106122 129196 88632 107658 92514 88680 123572
88728 88744 88760 88776 88792 88808 88824 110986 88872 88888 88904 93538 93602
96429 71097 83310 71099 71100 71101 83311 71103 22702 116362 123576 94818 89256
89272 51374 55470 89320 89336 119178 119434 89400 84142 88238 120458 96430 95842
89512 89528 121994 122250 122506 122762 89656 124042 96610 89704 123580 125322 89768
89784 97058 126346 89832 89848 127370 97442 127882 128138 128394 128650 122411 129418
97954 98018 130186 130442 130698 130954 22706 22710 22714 22718 12683 22730 22734
22738 101730 22746 96433 22758 22762 22766 22770 22774 22778 22782 18059 123592
91288 91304 91320 51378 55474 91368 91384 83500 103586 91448 84146 88242 88620
123594 91544 91560 91576 94764 95788 96812 97836 22926 22930 91736 22938 123596
22950 22954 22958 22962 91864 71278 71279 116268 117292 105698 119340 120364 121388
123598 124460 125484 126508 127532 128556 129580 130604 23086 92360 92376 92392 92408
37259 92456 92472 92488 107874 38539 96436 92568 92584 92600 116916 121012 31277
129204 23182 23186 23190 23194 123604 71337 71338 71339 23218 23222 71342 71343
109666 23242 23246 23250 109922 23258 96437 71353 83374 71355 71356 71357 83375
71359 72237 123608 93336 93352 93368 51382 55478 93416 93432 111714 93480 93496
84150 88246 88621 123610 93592 93608 93624 94765 95789 96813 97837 116152 71396
71397 83385 95173 50618 83386 116154 17851 50619 83387 116155 71409 71410 116156
71412 71413 71414 126742 71417 71418 116158 17855 71421 71422 116159 96440 26158
94632 94648 116920 121016 73355 129208 94776 94792 117090 94824 123620 94872 94888
94904 94920 94936 94952 94968 50734 95016 95032 53806 54830 55854 96441 95128
95144 95160 95176 95192 95208 95224 72238 123624 95384 95400 95416 51386 55482
95464 95480 86411 84526 95544 84154 88250 87691 123626 91694 95656 95672 94766
95790 96814 97838 95800 91275 91531 95848 123628 95896 95912 95928 121634 121698
95976 95992 94603 117294 96056 119342 120366 121390 123630 96152 96907 97163 97419
97675 97931 130606 96456 96472 96488 96504 102795 96552 96568 124194 103819 104075
96444 96664 96680 96696 116924 121020 106123 129212 96824 125218 125282 96872 123636
96920 96936 96952 96968 96984 97000 97016 110987 97064 97080 97096 126306 126370
96445 97176 97192 97208 97224 97240 97256 97272 116363 123640 97432 97448 97464
51390 55486 97512 97528 119179 128162 97592 84158 88254 120459 96446 97688 97704
97720 121995 122251 122507 122763 97848 124043 124299 97896 123644 97944 97960 97976
97992 129890 98024 98040 127371 127627 98104 130338 130402 130466 96447 98200 98216
98232 98248 130914 130978 131042 35939 12687 50738 83506 116274 72081 72113 119180
116275 20877 71891 12713 101482 12717 123674 50742 83510 71899 12729 101486 12733
123675 116280 37262 45454 53646 118552 50746 86414 94606 102798 110990 119182 127374
83516 116284 37263 125720 111715 126744 50750 86415 94607 102799 110991 119183 127375
26162 25706 25710 116936 121032 125128 129224 25742 74892 18787 25754 123684 71977
25770 25774 103112 103128 25786 25790 78476 25806 53810 54834 55858 96457 71993
83534 71995 71996 71997 83535 71999 26826 123688 103576 103592 103608 51402 55498
25914 25918 84530 103736 84170 88266 92362 123690 91698 88716 103864 116938 121034
125130 129226 25998 22819 91532 26010 123692 92556 72042 72043 93324 93580 72046
72047 117298 95116 119346 88267 121394 123694 96652 96908 97164 127538 128562 97932
130610 26166 26170 26174 26186 26190 74546 88268 92364 123698 26214 26218 26222
74547 121036 125132 123699 116328 37274 72101 72102 79089 72105 72106 116330 102810
72109 72110 127386 72114 116332 37275 72117 72118 52668 72121 72122 116334 102811
72125 72126 127387 26830 123704 117132 117388 117644 51406 55502 105704 105720 119436
119692 84174 88270 92366 96462 121228 30371 121740 116942 121038 125134 129230 106040
124044 124300 124556 123708 125324 125580 106168 126092 126348 126604 106232 127628 127884
119347 88271 128652 96463 129420 129676 129932 127539 128563 130700 130611 26834 26838
26842 123718 26854 26858 26862 72252 72253 72254 72255 123720 107672 72266 72267
51410 55506 26938 26942 26954 26958 84178 88274 92370 123722 107928 26986 26990
116946 121042 125138 129234 72291 27026 108120 27034 123724 27046 27050 27054 27058
108248 72302 72303 117300 40163 84179 88275 121396 123726 124468 125492 126516 116947
121043 129588 129235 27754 119208 127400 83620 116388 76058 76570 86737 86769 78249
86441 94633 13625 13627 13629 13631 116392 37290 76906 53674 87281 50858 86442
94634 102826 111018 119210 127402 83628 116396 107626 109674 53675 87793 50862 86443
94635 124010 126058 119211 130154 123736 50866 72394 72395 88465 88497 72398 72399
72402 72403 36202 101738 92374 123738 50870 72410 72411 36206 101742 125142 123739
116408 37294 76907 72422 118554 50874 72426 94638 102830 111022 72430 127406 72434
116412 125210 125722 72438 126746 50878 72442 94639 129306 129818 72446 130842 27758
116952 121048 125144 129240 72483 111176 111192 27802 123748 72489 27818 27822 27826
27830 72494 72495 27850 27854 53814 54838 55862 96473 72505 72506 72507 72508
72509 72510 72511 123752 27942 72522 72523 51418 55514 27962 27966 84534 27982
84186 88282 92378 123754 91702 28010 28014 116954 121050 125146 129242 28046 112200
55651 28058 123756 112280 28074 28078 112328 112344 72558 72559 117302 95117 84187
88283 121398 123758 96653 96909 97165 116955 121051 97933 129243 119224 127416 83684
116452 76059 117446 117574 94961 50918 86457 94649 56548 14139 118598 14143 116456
37306 45498 53690 95473 50922 83690 94650 102842 111034 119226 97390 83692 116460
107630 109678 53691 95985 50926 83694 94651 124014 126062 119227 130158 96497 50930
86460 94652 96657 96689 119228 127420 83700 116468 96913 96945 109851 97009 50934
86461 94653 112923 126662 126790 97265 116472 37310 45502 53694 97521 50938 86462
94654 102846 111038 119230 97391 83708 116476 125211 125723 53695 98033 50942 86463
94655 129307 129819 119231 130843 117304 72931 119352 120376 121400 122424 124472 125496
126520 127544 128568 129592 130616 116968 116984 117032 117048 75043 38286 92388 96484
117144 117160 117176 116964 121060 125156 129252 117320 117336 117352 40505 117400 117416
117432 117448 117464 117480 117496 117544 117560 117576 46478 117608 96485 117656 117672
117688 117704 117720 117736 117752 73273 51598 117928 117944 51430 55526 117992 118008
53902 118072 84198 88294 88633 89657 55694 55950 118200 94777 95801 96825 97849
72932 72933 118376 118392 72937 29610 29614 72940 72941 72942 72943 118568 118584
118600 118616 118632 118648 72953 72954 72955 29682 29686 72958 72959 121064 125160
129256 119368 75150 119400 40506 119448 119464 119480 119496 119512 119528 119544 119592
119608 53818 54842 55866 96489 119704 119720 119736 119752 119768 119784 119800 73274
119960 119976 119992 51434 55530 120040 120056 84538 120120 84202 88298 87694 89658
91706 120232 120248 89230 95802 96826 129258 91278 88419 120424 106042 92558 120488
120504 93326 88931 120552 120568 94862 95118 119354 95630 95886 122426 96654 96910
97166 97422 97678 97934 98190 121080 121128 121144 103566 22075 92396 96492 121240
121256 121272 116972 121068 125164 129260 107662 92515 121448 40507 121496 121512 121528
121544 121560 121576 121592 121640 121656 121672 93539 121704 96493 121752 121768 121784
121800 121816 121832 121848 73275 94819 122024 122040 51438 55534 122088 122104 119438
122168 84206 88302 120462 89659 95843 122280 122296 121998 95803 96827 129262 124046
96611 122472 106043 125326 122536 122552 97059 126350 122600 122616 97443 127886 128142
128398 128654 122427 129422 97955 98019 130190 130446 130702 130958 73276 73277 73278
73279 124056 124072 124088 51442 55538 124136 124152 124200 124216 84210 88306 88636
96498 124312 124328 124344 94780 95804 96828 97852 124488 124504 124520 106044 124568
124584 124600 124616 124632 124648 124664 117308 124728 119356 120380 121404 122428 124476
125500 126524 127548 128572 129596 130620 125176 125224 125240 125256 107875 92404 96500
125336 125352 125368 116980 121076 125172 129268 125512 125528 125544 40509 125592 125608
125624 125640 125656 125672 125688 125736 125752 125768 125784 125800 96501 73401 73402
73403 73404 73405 73406 73407 126104 126120 126136 51446 55542 126184 126200 126248
126264 84214 88310 88637 96502 126360 126376 126392 94781 95805 96829 97853 126536
126552 126568 126584 126616 126632 126648 126664 126680 126696 126712 126760 126776 126792
126808 126824 126840 73465 73466 73467 73468 73469 73470 73471 129272 127560 127576
127592 40510 127640 127656 127672 127688 127704 127720 127736 127784 127800 53822 54846
55870 96505 127896 127912 127928 127944 127960 127976 127992 128152 128168 128184 51450
55546 128232 128248 84542 128312 84218 88314 87695 89662 91710 128424 128440 89231
95806 96830 97854 91279 91535 128616 106046 128664 128680 128696 128712 128728 128744
128760 94863 128824 119358 95631 95887 122430 128920 96911 97167 97423 97679 97935
98191 129320 129336 129352 22079 92412 96508 129432 129448 129464 116988 121084 125180
129276 129608 129624 129640 40511 129688 129704 129720 129736 129752 129768 129784 129832
129848 129864 129880 129896 96509 129944 129960 129976 129992 130008 130024 130040 130200
130216 130232 51454 55550 130280 130296 130344 130360 84222 88318 120463 89663 130456
130472 130488 121999 95807 96831 97855 124047 124303 130664 106047 130712 130728 130744
130760 130776 130792 130808 127631 130872 130888 130904 130920 122431 130968 130984 131000
131016 131032 131048 131064 38034 117028 18731 37451 18735 51494 84262 94793 18747
37455 18751 84772 53834 38130 51498 84266 117034 111178 119370 127562 21067 117036
75180 93476 75196 51502 84270 117038 126098 119371 130194 51602 19625 78564 79433
19689 19705 76070 79076 91290 107674 124058 55698 79524 79588 91291 107675 124059
117540 53838 118564 51514 84282 94798 111182 119374 127566 84284 117052 125732 126244
126756 51518 84286 94799 129828 119375 130852 91300 38042 124068 75049 38058 38062
38070 38074 38078 85156 85220 88361 92457 96553 75065 38122 38126 38134 38138
38142 86628 86692 86756 55594 38202 21753 76074 87268 91306 107690 124074 91722
21929 87780 121130 125226 129322 91308 88484 124076 92562 88740 88804 112202 38330
38334 117322 89316 91310 107694 124078 96658 125514 126538 121131 129610 129323 91364
103114 107698 103115 78425 75162 75163 103118 125228 103119 103122 92580 103123 51562
84330 117098 111194 84331 127578 93348 93412 103130 107702 103131 51566 75194 117102
111195 75198 127579 94820 94884 94948 55598 95140 95204 76078 95460 91322 107706
96558 121234 95908 95972 121134 125230 129326 91324 96676 124092 125330 96932 96996
112203 97188 97252 117323 97508 91326 107710 96559 129426 97956 98020 121135 129611
129327 51634 119396 127588 78484 117140 19627 19629 19631 51606 54425 94821 78572
78580 78588 78537 79001 52466 51610 87193 117146 79561 95385 97433 19685 105700
103226 19693 103227 124492 19701 112947 103230 19709 103231 117156 53938 53970 54002
78441 86633 94825 79084 79092 79100 84774 53866 54514 51626 86634 94826 93338
95386 97434 103578 117164 54962 88905 55026 51630 119962 122010 126106 128154 130202
86636 94828 55730 119404 127596 92489 117172 55986 56018 56050 51638 86637 94829
79596 79604 79612 94921 53870 95177 51642 86638 94830 121638 95387 97435 124198
117180 96969 97097 97225 51646 119963 122011 97993 130342 98249 124132 117348 117412
117476 117604 117668 117732 117924 117988 88377 92473 96569 118372 29609 29625 118628
29673 40190 119460 119524 55610 119716 119780 76090 120036 88378 107754 96570 91726
120484 120548 95822 96846 129338 91372 121252 124140 121444 121508 121572 55611 121764
121828 76091 122084 88379 107758 96571 122468 125518 126542 121147 125243 129339 88380
107762 96572 124516 124580 124644 121148 125244 129340 91380 125348 124148 125540 125604
125668 125796 31465 40638 126116 126180 88381 107766 96573 126564 126628 126692 126820
40698 40702 127652 127716 55614 127908 127972 76094 128228 88382 107770 96574 128612
128676 128740 95823 96847 129342 91388 129444 124156 129636 129700 129764 55615 129956
130020 76095 130276 88383 107774 96575 130660 130724 130788 121151 125247 129343 76106
76107 103590 42298 42302 21733 91434 107818 124202 103596 88724 42350 91435 125258
129354 84780 84569 84796 21203 76138 117338 21211 76142 117339 95124 91438 107822
124206 103612 125522 126546 91439 125259 130642 85164 78546 85180 78633 86681 94873
85228 85236 85244 53962 53914 85308 86825 84586 94874 88873 119450 127642 117356
56010 56138 85436 95017 84590 94875 97065 128166 127643 117396 117652 91449 40169
40185 119700 103658 107834 124218 25915 30373 121748 103662 107835 129358 103666 124564
124220 39247 125588 40633 103670 126612 40697 127892 103674 107838 124222 25919 129684
129940 103678 107839 130643 86700 84690 84722 45866 54441 94885 86764 86772 86780
52010 79017 85234 54058 87209 89257 56106 95401 97449 40165 103738 38203 103739
103740 38205 58703 103742 21757 103743 84682 84810 87228 52394 86697 94889 87276
87284 87292 84778 53930 87356 86826 87210 89258 87754 95402 97450 117420 88778
93482 87484 95018 119978 122026 97066 128170 130218 94892 91850 119468 127660 117428
87724 88786 87740 93386 86701 94893 87788 87796 87804 117546 53934 118570 119594
87211 89259 121642 95403 97451 117436 96970 126250 97226 127786 119979 122027 129834
130346 130858 117450 93356 119474 97452 117452 88492 88500 91890 117932 84686 117454
126124 128172 130220 94900 92594 119476 127668 117460 88748 88756 88764 88780 84694
94901 88812 88820 88828 88876 53942 88892 54059 84698 117466 112330 84699 97453
56549 103866 107886 103867 103868 125526 112987 103870 107887 103871 117578 89276 52398
86713 94905 89324 95186 89340 119498 53946 89404 86827 84714 94906 88875 95406
97454 117484 121546 121674 89532 95019 84718 122030 97067 128174 130222 94908 89708
119484 127676 117492 96946 125770 89788 111403 84726 94909 126666 126794 89852 127690
53950 89916 119595 84730 94910 128714 95407 97455 117500 129738 129866 129994 127787
84734 122031 130762 130890 131018 117548 104114 93484 104178 52014 84782 117550 97068
119499 98092 45870 91545 45882 45886 79077 104042 107930 124314 91737 79525 45934
104046 107931 124315 53966 118572 52026 86734 94926 121644 119502 127694 117564 125740
126252 126764 52030 86735 94927 129836 119503 130860 93364 119506 127698 117580 92588
92596 92604 78547 84814 117582 126132 119507 130228 54457 91561 46394 46398 87269
104106 107946 124330 104810 54697 54713 104110 125290 124331 54889 124332 92566 46506
46510 104118 46522 46526 89317 104122 107950 124334 124506 125530 126554 104126 129626
124335 93372 52406 86745 94937 93420 93428 93436 53978 93500 78554 84842 117610
88877 119514 127706 117612 93612 56139 93628 78555 84846 117614 97069 119515 98093
94949 91577 56553 56569 95461 104170 107962 96622 104814 95909 95973 104174 125294
124347 96677 124348 125334 96933 96997 104182 97189 97253 97509 104186 107966 96623
124507 97957 98021 104190 129627 124351 94956 94964 94972 79033 95036 54062 87225
89273 56110 95417 97465 117660 95148 95156 95164 95180 95188 118642 95212 95220
95228 95420 52410 86761 94953 95468 95476 95484 53994 95548 85178 87226 87854
87755 95418 97466 117676 88779 93486 95676 95022 119994 122042 97070 128186 130234
95852 119532 127724 117684 95916 121554 95932 93387 86765 94957 95980 95988 95996
53998 118574 85179 87227 89275 121646 95419 97467 117692 96971 126254 97227 127790
119995 122043 129838 130350 130862 97468 117708 96684 96692 96700 117948 119996 122044
126140 128188 130236 96876 119540 127732 117716 96940 125650 96956 125746 86773 94965
97004 97012 97020 54006 97084 54063 87229 89277 112331 95421 97469 117724 97196
126674 97212 113867 126802 126834 97260 97268 97276 52414 86777 94969 97516 97524
97532 54010 128242 85182 87230 87855 88879 95422 97470 117740 121547 121675 97724
95023 119998 122046 97071 128190 130238 97900 119548 127740 117748 97964 129746 97980
111407 86781 94973 98028 98036 98044 54014 98108 85183 87231 89279 128715 95423
97471 117756 129739 129867 98236 127791 119999 122047 130763 130891 131019 78649 88473
79513 96665 52458 52462 52470 52474 52478 55706 79081 79097 79161 91754 92570
96666 79273 79289 121242 125338 129434 79465 124524 79529 79545 79577 78702 78703
95129 91758 95897 96153 96921 97177 97689 97945 98201 86841 95033 105708 118604
105724 54074 119756 85226 95034 111418 95438 97486 117996 121548 54075 121804 85230
95035 111419 119611 127803 124620 119612 127804 118004 125644 125772 106172 85238 95037
126668 113971 106236 54078 127948 86846 95038 111422 95439 97487 118012 129740 54079
113871 86847 95039 111423 119615 127807 85305 87657 88681 89705 55962 54126 95849
96873 97897 118580 85306 118074 79085 85307 118075 118076 79093 85309 62287 79098
118078 79101 79102 118079 88489 55914 96681 54506 54510 54518 54522 54526 55722
87273 87289 87353 88490 88682 96682 87465 87481 95850 96874 129450 124588 87721
87737 93594 87785 87801 95130 120426 95898 96154 96922 126570 128618 97946 130666
88505 121260 125356 129452 124596 54954 54958 54966 54970 54974 88889 88493 112282
96685 55018 55022 55030 55034 55038 55726 89321 89337 89401 88494 88683 96686
89513 89529 95851 122522 129454 124604 89769 89785 126362 89833 89849 127898 120427
128666 128922 129690 129946 130458 130714 130970 55734 55738 55742 118380 91854 121452
122476 125548 126572 128620 129644 130668 55918 121268 125364 129460 124628 55978 55982
55990 55994 55998 56014 88501 56026 96693 56042 56046 56054 56058 56062 56122
56126 56142 91866 92598 89709 56170 56174 95853 96877 97901 95181 85434 118202
52667 85435 118203 118204 125750 54127 62319 85438 118206 52671 85439 118207 96697
56554 56558 95193 56570 56574 95465 95481 95545 91882 88686 96698 95657 95673
95854 96878 97902 124652 95913 95929 121702 95977 95993 96057 91886 95899 96155
96923 126574 128622 97947 130670 121276 125372 129468 124660 96937 96953 96985 97001
97017 97081 91894 126374 96701 97193 97209 97241 97257 97273 97513 97529 97593
91898 88687 96702 97705 97721 95855 122523 97903 124668 97961 97977 129894 98025
98041 98105 91902 130470 128923 98217 98233 130918 130715 130971 119708 80335 105706
29613 124730 40173 121756 105710 29629 124731 118588 118612 118394 118636 118644 118395
105722 29677 124734 29685 113103 105726 40191 124735 119724 119732 119740 119764 119772
119788 119796 119804 120044 120052 120060 120124 87274 95146 88890 89402 89914 88782
93498 120252 95546 122090 126186 97594 98106 127916 120492 88787 120508 111930 56555
120556 120564 120572 118586 95566 95150 121658 122170 97515 96974 126266 97230 128314
128826 97998 130362 130874 122092 126188 128236 130284 127924 121516 121524 121532 46395
56557 121580 121588 121596 121660 87277 89325 56123 95469 97517 121772 121780 121788
121812 121820 121836 121844 121852 95187 122108 122172 87278 95162 88891 89403 89915
121550 93499 122300 95547 122094 126190 97595 98107 127932 122540 125774 122556 111931
56559 126670 126798 122620 122684 120123 95166 121659 122171 97519 129742 126267 129998
128315 128827 129851 130894 131022 125756 126268 126780 128316 118590 129852 130364 130876
126196 128244 130292 127956 125612 125620 125628 44367 95189 125676 125684 125692 87285
118618 125804 95477 97525 106170 31469 106171 31477 126586 106174 40639 106175 126204
87357 118634 88893 89405 89917 126380 56143 126396 95549 118638 97085 128246 98109
127964 126636 126644 126652 60751 118646 126700 126708 126716 62799 118650 126828 63311
122685 106234 40699 106235 40701 126587 106238 40703 106239 127980 127988 127996 128252
87358 87870 88894 89406 97530 88783 93502 128444 95550 122106 126202 97598 98110
128684 121555 128700 111934 56571 128748 128756 128764 95567 89339 121662 122174 97531
96975 126270 97231 128318 128830 97999 130366 130878 130300 129708 129716 129724 46399
56573 129772 129780 129788 87293 89341 56127 95485 97533 129964 129972 129980 130004
130012 130028 130036 130044 87359 87871 88895 89407 97534 121551 93503 130492 95551
122110 126206 97599 98111 130732 129747 130748 111935 56575 130796 130804 130812 120127
89343 121663 122175 97535 129743 126271 131004 128319 128831 129855 130895 131023 87722
96938 87786 95914 129706 87738 93610 87802 120490 122538 97194 128682 130730 87726
88794 87742 112042 96941 87790 87485 87806 120234 96942 89530 122282 129710 89786
126378 97258 128426 128938 129962 130474 130986 87725 128362 109930 109403 120238 87741
128363 125658 88942 95917 97965 120250 87789 120251 87797 114011 120254 87805 120255
96954 120554 95918 97966 95930 121706 89534 120494 122542 122602 128686 130734 125802
89790 126314 96957 126698 126826 89854 96958 128746 128874 97967 129770 129898 130026
130410 128939 130794 130922 131050 125786 88795 109934 125787 95662 112046 128430 126810
95663 129882 130906 112058 121562 129754 93614 112346 88943 125806 97197 121563 129755
128442 95989 120558 126318 130414 97013 95677 113499 97269 95678 112062 128446 98037
95679 129883 130907 122554 97198 97710 130746 95934 97005 89531 89533 89535 89787
122298 122286 129774 126382 122618 122555 129966 130478 130990 89789 89791 126394 95933
97981 126682 126714 126842 97262 89853 89855 129786 128730 128762 122558 122603 97711
130750 97021 129978 130010 130042 130490 128875 129790 130778 130810 122559 131002 131034
131066 112347 121710 128731 95981 129902 97261 126398 98029 130779 95997 128750 126830
122299 130926 97725 97213 126838 97277 122302 130494 98045 128766 98237 122303 131035
126702 130798 122622 129982 130030 126718 131054 130014 122619 122623 131006 130046 130814 131070
\end{lstlisting}


\end{document}